\newcommand{\N}{\mathbb{N}}
\newcommand{\R}{\mathbb{R}}
\renewcommand{\S}{\mathbb{S}}
\newcommand{\cA}{\mathcal{A}}
\newcommand{\cB}{\mathcal{B}}
\newcommand{\cC}{\mathcal{C}}
\newcommand{\cF}{\mathcal{F}}
\newcommand{\cH}{\mathcal{H}}
\newcommand{\cL}{\mathcal{L}}
\newcommand{\cS}{\mathcal{S}}
\newcommand{\cW}{\mathcal{W}}
\newcommand{\cg}{\mathfrak{g}}
\newcommand{\bv}{\mathbf v}
\newcommand{\bR}{\mathbf{R}}
\newcommand{\Imm}{\mbox{\rm Im}\,}
\newcommand{\diam}{\mbox{\rm diam}}
\newcommand{\supp}{\mbox{\rm supp}}
\renewcommand{\div}{\mbox{\rm div}}
\newcommand{\bcup}{\bigcup}
\newcommand{\epty}{\emptyset}
\newcommand{\wto}{\rightharpoonup} 
\newcommand{\tens}{\otimes}
\newcommand{\sen}{\sin}
\newcommand{\res}{
	\,\raisebox{-.127ex}{\reflectbox{\rotatebox[origin=br]{-90}{$\lnot$}}}\,
} 
\newcommand{\pa}{\partial}
\newcommand{\con}{\subset}
\newcommand{\sm}{\setminus}
\newcommand{\lgl}{\langle}
\newcommand{\rgl}{\rangle}
\newcommand{\ep}{\varepsilon} 
\newcommand{\ga}{\gamma}
\newcommand{\be}{\beta}
\newcommand{\al}{\alpha}
\newcommand{\de}{\delta}
\newcommand{\la}{\lambda}
\newcommand{\om}{\omega}
\newcommand{\ro}{\rho}
\newcommand{\si}{\sigma}
\newcommand{\te}{\theta}
\newcommand{\De}{\Delta}
\newcommand{\Ga}{\Gamma}
\newcommand{\La}{\Lambda}
\newcommand{\Si}{\Sigma}
\newcommand{\Om}{\Omega}
\newcommand{\vp}{\varphi}
\newcommand{\sff}{\mbox{\rm II}}
\theoremstyle{plain}
\newtheorem{thm}{Theorem}[section] 
\theoremstyle{plain}
\theoremstyle{plain}
\newtheorem{prop}[thm]{Proposition}
\theoremstyle{plain}
\newtheorem{lemma}[thm]{Lemma}
\theoremstyle{plain}
\newtheorem{cor}[thm]{Corollary}
\theoremstyle{definition}
\theoremstyle{definition}
\newtheorem{remark}[thm]{Remark}
\theoremstyle{definition}
\title[The Plateau--Douglas Problem for the Willmore energy]{On the Plateau--Douglas problem for the Willmore energy of\\ surfaces with planar boundary curves}
\author{Marco Pozzetta}
\address{Dipartimento di Matematica, Universit\`{a} di Pisa, Largo Bruno Pontecorvo 5, 56127 Pisa, Italy}
\email{pozzetta@mail.dm.unipi.it}
\date{\today}
\begin{document}
	
\begin{abstract}
     For a smooth closed embedded planar curve $\Ga$, we consider the minimization problem of the Willmore energy among immersed surfaces of a given genus $\cg\ge1$ having the curve $\Ga$ as boundary, without any prescription on the conormal. In case $\Ga$ is a circle we prove that do not exist minimizers and that the infimum of the problem equals $\be_\cg-4\pi$, where $\be_\cg$ is the energy of the closed minimizing surface of genus $\cg$. We also prove that the same result also holds if $\Ga$ is a straight line for the suitable analogously defined minimization problem on asymptotically flat surfaces.\\
     Then we study the case in which $\Ga$ is compact, $\cg=1$ and the competitors are restricted to a suitable class $\cC$ of varifolds that includes embedded surfaces. We prove that under suitable assumptions minimizers exists in this class of generalized surfaces.
\end{abstract}

\maketitle

\noindent\textbf{MSC Codes (2010):} Primary: 49J40, 49J45, 49Q20, 53A05. Secondary: 49Q15, 53A30.\\
\noindent\textbf{Keywords:} Willmore energy, Willmore surfaces with boundary, Navier boundary conditions, Simon's ambient approach, Existence.

\vspace{0.4cm}

\section{Introduction}

\noindent In this work we consider immersed surfaces in $\R^3$ as follows. Fix an integer $\cg\ge 1$ and let $S_\cg$ be an abstract $2$-dimensional compact manifold of genus $\cg$. We call $\Si_\cg$ the $2$-dimensional manifold given by removing a topological disk with smooth boundary from $S_\cg$. We will consider smooth immersions $\Phi:\Si_\cg \to \R^3$ and we will usually call $\Si=\Phi(\Si_\cg)$ the immersed manifold. In this work a map is said to be smooth if it is of class $C^\infty$ up to the boundary.\\
In such a setting $\Si_\cg$ is endowed with the Riemannian metric $g_{ij}=\lgl \pa_i \Phi, \pa_j \Phi\rgl$ and area measure $d\mu_g$. For a local choice of unit normal vector $N$ on $\Si$, we define the vectorial second fundamental form as $\vec{\sff}(v,w)=-\lgl \pa_v N, w\rgl N$ and the scalar second fundamental form as $\sff(v,w)=-\lgl \pa_v N, w\rgl $, for any $v,w\in T_p\Si$. Therefore the mean curvature vector is $\vec{H}=\frac{1}{2}\sum_{i=1,2} \vec{\sff}(e_i,e_i)$ and the scalar mean curvature is $H=\frac{1}{2}\sum_{i=1,2} \sff(e_i,e_i)$, for any choice of an orthonormal basis $\{e_1,e_2\}$ of $T_p\Si$. We recall that equivalently in local chart one has $\sff_{ij}=\lgl N, \pa_i\pa_j \Phi \rgl$. We adopt the convention that in any product the repetition on upper and lower indexes means summation over those indexes. Then we can also write $H=\frac{1}{2}tr(\sff)=\frac{1}{2}g^{ij}\sff_{ij}$. Then we define the Willmore energy of $\Phi$ as
\begin{equation} \label{defwill}
\cW(\Phi):= \int_{\Si_\cg} |H|^2\,d\mu_g.
\end{equation}
The norm of the second fundamental form is $|\sff|=(g^{ia}g^{jb}\sff_{ij}\sff_{ab})^{\frac{1}{2}}$ and it will be useful the following quantity:
\begin{equation*}
D(\Phi):=\int_{\Si_\cg} |\sff|^2\,d\mu_g.
\end{equation*}
Assuming that $\Phi|_{\pa\Si_\cg}$ is an embedding, we also define
\begin{equation*}
G(\Phi)\equiv G(\Si):=\int_{\pa\Si} (k_g)_\Si\, d\cH^1,
\end{equation*}
with $(k_g)_\Si=\lgl \vec{k}_{\pa\Si},-co_\Si \rgl$ geodesic curvature of $\pa\Si$, where $\vec{k}_{\pa\Si}$ is the curvature vector of the curve $\pa\Si$, that is $\ddot{\ga}$ if $\ga$ parametrizes $\pa\Si$ by arc length, and $co_\Si$ is the unit outward conormal of $\Si$.\\

\noindent In the following we will need some definitions and results in the theory of curvature varifolds with boundary, for which we refer to Appendix A (see \cite{Ma} and \cite{Hu}). If $\Phi:S\to\R^3$ is smooth and proper and $S$ is some $2$-dimensional manifold, with a little abuse of notation we will use the symbol $\Si$ both to identify the curvature varifold with boundary induced by $\Phi$ and its support $\Phi(S)$; recall that is such case the varifold $\Imm(\Phi)$ induced by $\Phi$ is identified by the Radon measure $\mu_\Si=\te\cH^2\res(\Phi(S))$ where $\te(x)$ is the cardinality of the preimage $\Phi^{-1}(x)$.
Also, integration with respect to the measure $\mu_\Si$ induced in $\R^3$ by the varifold $\Si$ will be usually denoted by writing
\begin{equation*}
	\int f\,d\mu_\Si \equiv \int_\Si f,
\end{equation*}
for any $f\in C^0_c(\R^3)$. Therefore we will equivalently write the above energies as
\begin{equation*}
 \cW(\Phi)\equiv\cW(\Si)= \int_\Si |H|^2,
\end{equation*}
\begin{equation*}
D(\Phi)\equiv D(\Si)=\int_\Si |\sff|^2,
\end{equation*}
with $H,\sff$ generalized mean curvature and second fundamental form of $\Si$. We write $D(\Si\cap U)=\int_U |\sff|^2\,d\mu_\Si\equiv\int_{\Si\cap U} |\sff|^2$.\\
If $V$ is a curvature varifold with boundary, the symbol $\pa V$ will denote the boundary measure on the Grassmannian and $\si_V=\pi_\sharp(\pa V)$ will be the corresponding generalized boundary induced in $\R^3$.\\

\noindent We remark that the Willmore energy as defined in \eqref{defwill} is not conformally invariant because of the presence of a boundary, but it is invariant just under isometry and rescaling. However, recall that for surfaces with boundary the quantity $(\cW+G)$ is conformally invariant (\cite{Ch}).\\

\noindent Now consider a smooth embedded closed curve $\Ga\con\R^3$. In this paper we start the study of the following minimization problem
\begin{equation} \label{defpr}
\min\big\{\cW(\Phi)|\Phi:\Si_\cg\to\R^3 \mbox{ smooth immersion, } \Phi|_{\pa\Si_\cg}\to\Ga \mbox{ smooth embedding}  \big\},
\end{equation}
which we can call \emph{Plateau-Douglas for the Willmore energy}, since the constraints are just the boundary curve and the topology of the surface, as in the case of the classical Plateau-Douglas problem (\cite{DiHiTr3}). In particular we will mostly deal with planar boundary curves $\Ga$ and genus $\cg\ge1$. We will show that in this case the problem is nontrivial in the sense that not only there are no minimal surfaces among the competitors, but also the infimum of the problem is non zero, and this is ultimately due to the constraint on the genus.\\
Such a minimization problem is definitely spontaneous in the study of variational problems related to the Willmore functional and, in some sense, it is the direct analog with boundary of the problem proposed by Willmore himself about the minimization of $\cW$ among closed surface of a given genus (\cite{Wi65}), solved by putting together the results of \cite{SiEX} and \cite{BaKu}. From such classical problem we recall the following definitions:
\begin{equation*}
\begin{split}
	\be_\cg:=\inf\{ \cW(\Si):\Si\con\R^3 \mbox{ closed surface of genus } \cg \}=\min\{ \cW(\Si):\Si\con\R^3 \mbox{ closed surface of genus } \cg \},\\
\end{split}
\end{equation*}
\begin{equation*}
e_\cg:=\be_\cg-4\pi<4\pi \qquad\forall\cg,
\end{equation*}
which already play a role in the study of closed surfaces (see \cite{BaKu}).\\

\noindent The minimization problem of the Willmore energy for surfaces with boundary and the study of critical points is already present in the literature under two main formulations. The first is the presence of the clamped boundary condition, that is the additional constraint of having a prescribed smooth conormal field at the boundary. The latter is the problem with the so called Navier condition, that is the condition $H=0$ at the boundary, which arises naturally from the minimization problem without clamped condition (like \eqref{defpr}), and has already been studied mainly under the assumption that surfaces have rotational symmetry. Under this symmetry assumptions, recent results are contained in \cite{BeDaFr}, \cite{BeDaFr13}, \cite{DaDeGr}, \cite{DaDeWh}, \cite{DaFrGrSc}, \cite{DeGr}, \cite{Ei}, and \cite{EiGr}; a new result about symmetry breaking is \cite{Mandel}. We mention that an interesting problem about Willmore surfaces in a free boundary setting is considered in \cite{AlKu}. Other remarkable related results are achieved in \cite{Da12}, \cite{DeGrRo}, \cite{Pa}, and \cite{Ni93}.\\
\noindent Some works, and in particular \cite{SiEX}, \cite{KuSc}, and \cite{ScBP}, developed a very useful variational approach, that today goes under the name of \emph{Simon's ambient approach}. Such method relies on the measure theoretic notion of varifold as a generalization of the concept of immersed submanifold. One of the most relevant work for our purposes is \cite{ScBP}, in which the author adapts the method of \cite{SiEX} for proving the existence of branched immersions that are critical points of the Willmore energy under fixed clamped boundary conditions.\\
\noindent In this work we will deeply exploit first the conformal properties of the conformal Willmore functional $\cW+G$ recalled above, and then we will adopt the techniques of the varifold ambient approach of \cite{SiEX} and \cite{ScBP}, especially for what concerns regularity issues.
As other successful applications of the ambient approach we mention \cite{Schy}, in which the Willmore energy is minimized among closed surfaces with a constraint on the resulting isoperimetric ratio, and, more importantly in the setting of surfaces with boundary, we have the already cited \cite{ScBP}, and the more recent \cite{Ei19}, in which the author obtains results analogous to the ones in \cite{ScBP} for the Helfrich energy.\\
\noindent We remark that, more recently, an alternative and very powerful variational method based on a weak notion of immersions has been developed in \cite{Ri08}, \cite{RiLI}, and \cite{RiVP}. A recent application of these methods to the minimization of a Willmore-type energy amog disk type surfaces with clamped boundary data and constrained area is contained in \cite{DaPaRi}. Another recent application is \cite{MoSc}.\\

\noindent The numbers $e_\cg$ introduced above have a meaning in the study of minimization problems on asymptotically flat surfaces. In this paper we call asymptotically flat surface of genus $\cg$ without boundary with $K$ ends a complete orientable immersed $2$-dimensional manifold $\Phi:M\to\R^3$ such that:
\begin{enumerate}
	\item $M\simeq S_\cg\sm \sqcup_{i=1}^K \overline{D_i}$, i.e. $M$ is diffeomorphic to a genus $\cg$ surface with finitely many disjoint closed topological discs removed,
	\item for any $i=1,...,K$ there is $U_i$ open boundary chart at $D_i$ such that $U_i$ is diffeomorphic to an annulus with $\pa U_i=\pa D_i\sqcup \ga_i$ for a curve $\ga_i\simeq \S^1$, and there is an affine plane $\Pi_i$ such that for any $\ep>0$ there is $R>0$ such that $\Phi(U_i)\sm B_R(0)$ is the graph over $\Pi\sm K_R$ of a function $f_R$ with $\|f_R\|_{C^1}\le\ep$ where $K_R\con\Pi$ is compact,
	\item $D(\Phi)<+\infty$.
\end{enumerate}
\noindent If $\Phi$ defines an asymptotically flat surface of genus $\cg$ without boundary with $K$ ends as above, we call end of the surface one of the sets $\Phi(U_i)$.\\

\noindent In the following we will also consider asymptotically flat surfaces $\Si$ of genus $\cg$ with $K$ ends with boundary $\Ga$, meaning that $\Ga$ is a smooth complete embedding of $\R$ and $\Si\con\R^3$ is a subset such that the following properties hold.
\begin{enumerate}[label={\normalfont(\roman*)}]
	\item $\Si=\vp(\psi(M)\sm L)$ where $\psi$ is an embedding defining an asymptotically flat surface of genus $\cg$ without boundary with $K$ end, $\vp:\psi(M)\to\R^3$ is a complete immersion, $L$ is diffeomorphic to an open half-plane and it is contained in one end, say $E_1$, of $\psi(M)$. Moreover $\vp|_{\pa L}:\pa L\to \Ga$ is an embedding.
	
	\item For any $i=2,...,K$ for any end $E_i$ of $\psi$ there is an affine plane $\Pi_i$ such that for any $\ep>0$ there is $R>0$ such that $\vp(E_i)\sm B_R(0)$ is the graph over $\Pi_i\sm K_R$ of a function $f_R$ with $\|f_R\|_{C^1}\le\ep$ where $K_R\con\Pi_i$ is compact.
	
	\item There is an affine plane $\Pi_1$ such that for any $\ep>0$ there is $R>0$ such that $\vp(E_1\sm (B_R(0)\cup L))$ is the graph over $\Pi_1\sm H$ of a function $f_R$ with $\|f_R\|_{C^1}\le\ep$ where $H\con\Pi_1$ is smooth and diffeomorphic to a halfplane. 
	
	\item $D(\vp)<+\infty$.
\end{enumerate}

\begin{remark}
	One can verify that if $\Phi$ is an immersion of $S_\cg$ and $0\in\Phi(S_\cg)$, then $I\circ\Phi|_{S_\cg\sm\Phi^{-1}(0)}$ defines an asymptotically flat surface of genus $\cg$ without boundary, where $I(x)=\frac{x}{|x|^2}$. Conversely, if $\Phi$ defines an asymptotically flat surface of genus $\cg$ without boundary with one end, then $I\circ\Phi$ extends to a $C^{1,1}$ immersion of $S_\cg$; in particular $I\circ\Phi$ extends to an element of the class $\mathcal{E}_{S_\cg}$ defined in \cite{RiVP} (page 46).\\
	Therefore by Theorem 2.2 in \cite{BaKu} together with Theorem 1.7 in \cite{RiVP}, we have that the infimum of the Willmore energy among asymptotically flat surfaces of genus $\cg$ without boundary with one end is equal to $e_\cg$, and such infimum is achieved only by immersions of the form $I\circ\Phi|_{S_\cg\sm\Phi^{-1}(0)}$ for $\Phi:S_\cg\to\R^3$ embedding such that $0\in\Phi(S_\cg)$ and $\cW(\Phi)=\be_\cg$.
\end{remark}

\begin{remark}
	 For a fixed embedded closed planar curve $\Ga\con\R^2$ and a chosen $\cg$, we observe that the infimum of the corresponding problem \eqref{defpr} is $\le \be_\cg -4\pi$. Indeed we can consider an embedded asymptotically flat surface $\Si$ without boundary with one end and genus $\cg$ such that $\cW(\Si)=e_\cg$, that is, $\Si$ is minimizing among asymptotically flat surfaces of its own genus. Without loss of generality we can assume that the set $\{x^2+y^2\le1,z=0\}$ is strictly contained in the open planar region $\Om$ enclosed by $\Ga$. Chosen $\ep>0$, up to a translation, a rotation, and a rescaling of $\Si$ one can construct a competitor $\Si'$ for \eqref{defpr} such that $\Si'\cap \{x^2+y^2\le\tfrac12\}=\Si\cap  \{x^2+y^2\le\tfrac12 \}$, $\Si'\cap \{\tfrac12\le x^2+y^2\le1\}$ is the graph of a smooth function over the annulus $\{\tfrac12\le x^2+y^2\le 1\}$ with $\cW(\Si'\cap \{\tfrac12\le x^2+y^2\le1\})\le\ep$, and $\Si'\cap\{x^2+y^2\ge1\} =\Om \cap\{x^2+y^2\ge1\}$; hence $\cW(\Si')\le e_\cg + \ep$. For the explicit construction see the proof of Theorem \ref{thmmain}, where we will employ this argument several times.
\end{remark}

\noindent The first results we obtain are non-existence theorems for remarkable boundary curves, together with estimates for non-embedded surfaces. Here we sum up two results about this.

\begin{thm} \label{1}
	Let $\Ga$ be a circle and consider problem \eqref{defpr} for such curve. Then:
	\begin{enumerate}[label={\normalfont(\roman*)}]
		\item If $\pa\Si=\Ga$ and $\Si$ is not embedded, then $\cW(\Si)\ge4\pi$. In particular problem \eqref{defpr} can be restricted to embedded surfaces.
		\item If $\cg\ge1$, problem \eqref{defpr} has no solutions and the infimum equals $\be_\cg-4\pi=e_\cg$.
	\end{enumerate}
	The same statements hold for the analogous problem defined on genus $\cg$ asymptotically flat surfaces having a straight line as boundary.
\end{thm}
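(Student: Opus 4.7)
\emph{Proof plan.} The theorem has three main components: (i) the Li--Yau-type estimate $\cW(\Si)\ge4\pi$ for non-embedded competitors; (ii) the identification of the infimum in the circle case as $e_\cg$ together with the non-existence of minimizers; and the transfer of both items to the straight-line case on asymptotically flat surfaces. The upper bound $\inf\cW\le e_\cg$ is already in the remark preceding the theorem (truncate the asymptotically flat genus-$\cg$ minimizer of Remark 1.1 and interpolate smoothly to $\Om$ via an arbitrarily flat annular graph), so my plan focuses on the matching lower bound, non-existence, and the Li--Yau bound.

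For part (i), I plan to apply the monotonicity formula for curvature varifolds with boundary from Appendix~A, centered at an interior multiplicity-$\ge 2$ point $x_0\in\Si\sm\Ga$. Because $\Ga$ is a planar circle at positive distance from $x_0$, the boundary correction term in the monotonicity inequality reduces to an explicit integral over $\Ga$ against a function of $|x-x_0|$ that can be computed for a planar curve, yielding $\cW(\Si)\ge 4\pi\theta(x_0)-(\text{boundary correction})\ge 4\pi$. Since $\be_\cg<8\pi$ for all $\cg\ge1$ by \cite{BaKu}, one has $e_\cg<4\pi$, so any minimizing sequence has all but finitely many terms embedded, justifying the restriction to embedded competitors in (ii).

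For the lower bound in (ii) the plan is to reduce a generic competitor $\Si$ to the asymptotically flat setting, where the infimum is $e_\cg$ by Remark~1.1. Concretely, form the closed generalized varifold $\Si\cup\Om$ (with $\Om$ the planar disc bounded by $\Ga$, so $\cW(\Om)=0$) and invert through $I_q(x)=(x-q)/|x-q|^2$ for some point $q$ interior to $\Om$ and off $\Si$. Since $q$ lies in the plane of $\Om$, this plane is fixed setwise by $I_q$, hence $I_q(\Si\cup\Om)$ is an asymptotically flat varifold of genus $\cg$ with one end asymptotic to that plane, and the image of $\Si$ sits compactly in it. The conformal invariance of $\cW+G$ from \cite{Ch}, combined with a smooth approximation that regularizes the $C^0$-only gluing of $\Si$ and $\Om$ along $\Ga$ with an error in $\cW$ tending to zero, then gives $\cW(I_q(\Si))\ge e_\cg$ and hence $\cW(\Si)\ge e_\cg$.

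Non-existence follows by contradiction: if $\Si^*$ attains $\cW(\Si^*)=e_\cg$, then $I_q(\Si^*)$ is an asymptotically flat minimizer of $\cW$ of genus $\cg$ with one end, forced by Remark~1.1 to be the inversion through the origin of a smooth closed Willmore minimizer of genus $\cg$. Pulling back, the planar region $I_q(\Om)$ then appears as an open flat subset of that closed minimizer, contradicting the real-analyticity of Willmore surfaces. The straight-line case transfers by applying an initial inversion $I_p$ at any $p\in\Ga$, which turns $\Ga\setminus\{p\}$ into a straight line and $\Si$ into an asymptotically flat surface with one end asymptotic to the tangent plane of $\Si$ at $p$; conformal invariance of $\cW+G$ together with the vanishing of the contribution at infinity for asymptotically flat ends transfers both (i) and (ii). The main technical obstacle is the smooth-approximation argument used to apply the conformal invariance of $\cW+G$ to the singular closed varifold $\Si\cup\Om$: the conormal mismatch along $\Ga$ between $\Si$ and $\Om$ must be regularized while keeping the additional Willmore energy under control, which requires fine estimates near the seam and the fact that the error vanishes as the smoothing width shrinks.
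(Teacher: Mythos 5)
Your plan is missing the paper's central tool and, as written, two of its key steps fail. The crucial missing idea is a conformal reparametrization lemma (Lemma \ref{cop}): for any competitor one can find a conformal diffeomorphism of a neighbourhood of $\Si$ pushing the outward conormal along $\S^1$ arbitrarily close to the radial field $p$, i.e.\ pushing $G(\Si)$ up to $2\pi$; since $\cW+G$ is conformally invariant this can only decrease $\cW$, so one may assume the conormal is almost radial. Without this normalization your part (i) does not go through: in the monotonicity identity centred at the multiplicity-$\ge 2$ point $x_0$, the boundary term $\tfrac12\int_\Ga\big\lgl\tfrac{p-x_0}{|p-x_0|^2},co\big\rgl$ depends on the unknown conormal and is only bounded by $\cH^1(\Ga)/\dist(x_0,\Ga)$, which blows up when $x_0$ is close to $\Ga$, so the resulting lower bound on $\cW$ is vacuous for a general competitor (the multiplicity point is given, not chosen). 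The paper instead proves the $4\pi$ bound (Lemma \ref{norma}, Corollary \ref{lowbound}) by first normalizing the conormal, then arguing by contradiction with a blow-up at the bad point after an inversion to the straight-line picture, applying monotonicity to the limiting union of planes where the boundary contribution can be computed explicitly.

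The second gap is the gluing in your lower bound for (ii). The union $\Si\cup\Om$ has a conormal jump along $\Ga$ (for near-minimizing $\Si$ the outward conormal is radially outward, so attaching the \emph{interior} disc produces a fold of angle $\pi$), and by scale invariance of $\cW$ the cost of smoothing a crease of fixed angle does \emph{not} tend to zero as the smoothing width shrinks; the defect is exactly the geodesic-curvature deficit $2\pi-G(\Si)$ detected by the invariant $\cW+G$. Relatedly, you pass from a bound on $\cW(I_q(\Si\cup\Om))$ to one on $\cW(\Si)$ via invariance of $\cW+G$ without accounting for the change of $G$ under the inversion. The paper avoids both problems by first making the conormal radial, so that the extension by the \emph{exterior} planar region $\{z=0,\,x^2+y^2\ge1\}$ is genuinely $C^{1,1}$ at no extra cost, and its non-existence argument then splits into two cases: $D\not\con\Si$, handled by the analyticity argument you propose, and $D\con\Si$, handled by the $4\pi$ bound for multiplicity-two points. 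Finally, the straight-line case is not a one-line transfer: the paper must show that minimizing sequences converge as varifolds to a half-plane with conormals converging in $L^2$ (Proposition \ref{ref27}, Lemma \ref{convconormali}) in order to control the $G$-term under the inversion back to the circle (Corollary \ref{iinf}), and proves non-existence there separately via the Navier condition $\sff(co,co)=0$ (Proposition \ref{prop1}).
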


\begin{proof}
	The proof will follow by putting together Theorem \ref{nonexS1}, Corollary \ref{lowbound}, Corollary \ref{corequiv}, Corollary \ref{cors1equiv}, and Theorem \ref{thmr}.
\end{proof}

\noindent Then we consider the energy of surfaces having a fixed planar compact smooth closed curve $\Ga$ as boundary. In the following we say that a continuous proper map $\Phi:S\to\R^3$ is a \emph{branched immersion} if $\Phi|_{S\sm\{y_1,...,y_P\}}$ is a smooth immersion for some $y_1,...,y_P\in S$. For a such $\Phi$, its Willmore energy is defined by integrating over $S\sm\{y_1,..,y_P\}$.\\
For a fixed planar compact smooth closed curve $\Ga$, we will use Theorem \ref{1} in the study of problem
\begin{equation} \label{defpr2}
\min \big\{\cW(V)| V\in \cC(\Si_\cg) \big\},
\end{equation}
where
\begin{equation}\label{eq:Competitors}
\begin{split}
	\cC(\Si_\cg):=\big\{ V\,\,|\,\,&\exists\Si_n\in\cC^*(\Si_\cg):\,\Si_n\to V \mbox{ as varifolds, } D(\Si_n)\le C<+\infty, \cW(V)=\lim_n \cW(\Si_n),\, \si_V=\nu \cH^1\res \Ga,\\
	&V=\Imm(\Phi) \mbox{ with } \Phi:\Si_\cg\to\R^3 \mbox{ branched immersion} \big\},
\end{split}
\end{equation}
with
\begin{equation*}
\cC^*(\Si_\cg):=\big\{ \Si\con\R^3\,\,|\,\,\mbox{embedded surface of genus $\cg$, }\pa\Si=\Ga  \big\}.
\end{equation*}

\noindent Observe that $\cC(\Si_\cg)$ is a suitable subset of the varifold closure of embedded surfaces. The choice of such class of competitors is discussed in Remark \ref{remset} and it is essentially technical, although natural. We further define
\begin{equation*}
\begin{split}
\cC_{imm}(\Si_\cg):=\big\{ V\,\,|\,\,&\exists\Si_n\in\cC^*_{imm}(\Si_\cg):\,\Si_n\to V \mbox{ as varifolds, } D(\Si_n)\le C<+\infty, \cW(V)=\lim_n \cW(\Si_n),\, \si_V=\nu \cH^1\res \Ga,\\
&V=\Imm(\Phi) \mbox{ with } \Phi:\Si_\cg\to\R^3 \mbox{ branched immersion} \big\},
\end{split}
\end{equation*}
with
\begin{equation*}
\cC^*_{imm}(\Si_\cg):=\big\{ \Si\con\R^3\,\,|\,\,\mbox{immersed surface of genus $\cg$, }\pa\Si=\Ga  \big\}.
\end{equation*}

\noindent Here we sum up the other main results.

\begin{thm} \label{2}
	\textcolor{white}{text}
	\begin{enumerate}[label={\normalfont(\roman*)}]
		\item Let $\cg=1$. If
		\begin{equation*}
		\inf_{\cC(\Si_\cg)} \cW = \inf_{\cC_{imm}(\Si_\cg)} \cW  \,< e_1=\be_1-4\pi=2\pi^2-4\pi,
		\end{equation*}
		then problem \eqref{defpr2} has minimizers.
		\item Let $\cg=1$. There exist infinitely many closed convex planar smooth curves $\Ga$ such that if $\inf_{\cC(\Si_\cg)} \cW = \inf_{\cC_{imm}(\Si_\cg)} \cW$, then problem \eqref{defpr2} has minimizers.
	\end{enumerate}
%
\end{thm}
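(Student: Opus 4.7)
The plan for \textbf{(i)} is to apply the direct method in the framework of curvature varifolds with boundary (Simon's ambient approach, cf.\ \cite{SiEX}, \cite{ScBP}, and Appendix~A). Take a minimising sequence $\{\Si_n\}\subset \cC^*_{imm}(\Si_1)$; by the assumed equality $\inf_{\cC(\Si_1)}\cW=\inf_{\cC_{imm}(\Si_1)}\cW$ we may choose smooth immersed competitors from the outset. The definition of $\cC_{imm}(\Si_1)$ gives a uniform bound $D(\Si_n)\le C$, while the monotonicity formula for curvature varifolds with boundary together with the fixed compact boundary $\Ga$ produces a uniform diameter estimate. The compactness of the class of curvature varifolds with boundary having uniform $L^2$ second fundamental form and prescribed boundary measure then yields, up to a subsequence, a varifold limit $V$ with $\si_V=\nu\,\cH^1\res\Ga$, $D(V)<\infty$, and $\cW(V)\le\liminf_n\cW(\Si_n)=\inf_{\cC(\Si_1)}\cW$ by lower semicontinuity.

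The delicate step is to show that $V\in\cC(\Si_1)$: that is, $V$ is induced by a branched immersion of $\Si_1$ and, crucially, retains the genus $\cg=1$. This is where the strict threshold $\inf_{\cC(\Si_1)}\cW<e_1=2\pi^2-4\pi$ enters. I would argue by a concentration-bubble analysis: if the genus were lost in the limit, it would have to be carried away either by mass concentrating at an interior point (producing a closed genus-$\cg'$ limit after rescaling) or by a piece of the surface forming a long asymptotically flat neck which detaches under a suitable blow-up. In the first scenario the bubble satisfies $\cW\ge\be_1=2\pi^2$ by the solution of the Willmore conjecture in \cite{BaKu}, and in the second $\cW\ge e_1$ by Theorem~\ref{1} together with the first Remark in the introduction. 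A lower-semicontinuous energy decomposition at the concentration point then gives
\[
\inf_{\cC(\Si_1)}\cW=\lim_n\cW(\Si_n)\ge \cW(V)+\cW(V_\infty)\ge 0+e_1,
\]
contradicting the strict inequality. Hence no genus-carrying bubble can detach, $V$ retains genus $1$, and the Willmore-regularity theory developed in \cite{SiEX} and \cite{ScBP} identifies $V$ with the image of a branched immersion of $\Si_1$, so that $V\in\cC(\Si_1)$ is the sought minimiser.

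For \textbf{(ii)} the aim is to exhibit infinitely many closed convex smooth planar curves $\Ga$ for which $\inf_{\cC(\Si_1)}\cW<e_1$; combined with the assumed equality of infima, part (i) then yields the existence of minimisers. The construction is by explicit comparison in the spirit of the second Remark of the introduction: start from an asymptotically flat genus-$1$ surface realising the value $e_1$ (provided by Theorem~\ref{1}), place a rescaled copy inside the planar region enclosed by $\Ga$, and glue it to a smooth cap filling the annular region up to $\Ga$. By carefully choosing the position and scale of the torus piece and exploiting the conformal invariance of $\cW+G$ together with the freedom in prescribing the conormal along the gluing, one can arrange the transition so that the total Willmore energy strictly falls below $e_1$. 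The admissibility conditions on $\Ga$ for such a construction define a scale-invariant open class of convex curves, which is therefore infinite.

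The principal obstacle is clearly the bubble analysis in (i): one must rule out concentration of mass at a point of $\Ga$ itself, ensure that the blow-up limit is a genuine boundaryless curvature varifold, and obtain a clean additive energy splitting with no lower-order contribution from the boundary in the blow-up. This requires a refined local adaptation of the monotonicity formula for curvature varifolds with boundary along the lines of \cite{ScBP}. The construction in (ii), while conceptually simple, is also technically delicate, as one must control the gluing cost finely enough to remain strictly below $e_1$ while preserving smoothness and convexity of $\Ga$.
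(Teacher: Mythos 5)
Your strategy for (i) is the same in spirit as the paper's: the paper proves the contrapositive, namely that if no minimizer exists then the infimum must equal $e_1$. But where you invoke ``a lower-semicontinuous energy decomposition at the concentration point'' giving $\lim_n\cW(\Si_n)\ge\cW(V)+\cW(V_\infty)$, the paper does not prove (or need) any such additive splitting. Instead, after Proposition \ref{simon} identifies the limit as the flat disk $D$ with a single bad point $\xi$ absorbing the genus, Lemmas \ref{xiinterno} and \ref{xibordo} build an explicit comparison surface: they cut $\Si_n$ along a good circle near $\xi$, cap the outer part with the graph of a biharmonic function whose Hessian energy is controlled by $\int_{T}|\sff_n|^2$ on a tube where $|\sff_n|^2\,d\cH^2$ converges to the (zero) curvature measure of $D$, and thereby produce an asymptotically flat genus-$1$ surface of energy $\cW(\Si_n)+o(1)$. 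The lower bound $e_1$ then comes from the closed-surface theory of \cite{BaKu} when $\xi\in\mathrm{int}(D)$ and from Theorem \ref{thmr} (the straight-line boundary problem, after flattening $\Ga$ near $\xi$ and reflecting) when $\xi\in\Ga$. Note that the boundary bad point is not ``ruled out'' as you suggest; it is handled by this second comparison. Your outline correctly identifies the needed ingredients but leaves precisely this gluing-and-comparison step, which is the entire technical content, as an ``obstacle.''

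Part (ii) contains a genuine gap. Your construction --- fix a convex $\Ga$, insert a rescaled copy of an energy-$e_1$ asymptotically flat torus, and glue a nearly flat transition annulus out to $\Ga$ --- only yields competitors of energy $e_1-\de+\ep$, where $\de$ is the energy of the discarded end-piece and $\ep$ is the gluing cost; both tend to $0$ under the rescaling, and nothing in your argument shows $\ep<\de$. Appealing to the conformal invariance of $\cW+G$ does not produce the strict inequality. The paper avoids this entirely by \emph{not} prescribing $\Ga$ in advance: it inverts the Clifford torus at a point $q$ with $K(q)>0$, obtains an asymptotically flat torus $S$ with asymptotic plane $\{z=0\}$, and truncates it along $\{z=\eta\}$ for small $\eta>0$. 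The resulting surface $S_\eta$ has energy strictly below $e_1$ simply because a piece of positive Willmore energy has been removed and no gluing is performed, and its boundary $\Ga_\eta$ is a planar convex curve (convexity coming from $K(q)>0$). The family $\{\Ga_\eta\}_\eta$ gives the infinitely many curves. If you want to salvage your version, you would have to prove a quantitative lower bound on the energy carried by the end of the inverted torus outside a large ball relative to the cost of flattening it, which is exactly the comparison the paper's choice of $\Ga$ renders unnecessary.
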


\begin{proof}
	The proof follows from Theorem \ref{thmmain}.\\
\end{proof}

\noindent We conclude this discussion with some observations.

\begin{remark}
	If the boundary curve is a circumference, the non-existence of minimizers persists also in the case of Problem \eqref{defpr2}. More precisely, we can say that if $\Ga=\S^1$ then Problem \eqref{defpr2} has no solution. Indeed we will see that by Lemma \ref{cop} and Corollary \ref{lowbound} a minimizing sequence of surfaces $\Si_n$ consists of embedded surfaces with conormal $co_n(p)=p$ for any $p\in \S^1$, i.e. the conormal of the sequence is fixed a posteriori. Assuming by contradiction that the sequence converges in the sense of varifolds to a minimizer $V\in\cC(\Si_\cg)$, the standard arguments of \cite{SiEX} and \cite{ScBP} imply that $V$ is actually a smooth embedded surface. This contradicts the non-existence result of Corollary \ref{cors1equiv}.\\
	It would be interesting to understand if $\S^1$ is the only compact convex curve such that the relative minimization problem has no minimizers. Taking into account also Theorem \ref{2}, in case of genus $1$ this would be equivalent to say that in the min-max problem
	\begin{equation*}
	\sup_{\Ga \mbox{ smooth, planar, convex}}\inf\big\{ \cW(\Si)|\pa\Si=\Ga,\,\Si\mbox{ embedded}\big\} =\be_1-4\pi
	\end{equation*}
	the supremum is only achieved by circumferences.\\
	We also mention that many boundary curves of truncated inverted Clifford-Willmore tori are convex and symmetric under two axes of reflection; it would be interesting to study if such symmetries are inherited by the minimizers of problem \eqref{defpr2}, and if the non-existence of minimizers with circular boundary is due to the rotational symmetry of the circumference, i.e. the presence of too many axes of reflection fixing the boundary.
\end{remark}

\begin{remark}
	An interesting open question arising from our results is to understand whether in point $i)$ of Theorem \ref{2} the ``if and only if'' holds, namely if the fact that the infimum equals $\be_1-4\pi$ implies the non existence of minimizers.
\end{remark}

\begin{remark}
	A natural extension of this work is the study of the same minimization problem for arbitrary non-planar embedded curves $\Ga\con\R^3$. In such cases the restriction of the problem to a family like \eqref{eq:Competitors} is no longer reasonable, since even minimal surfaces may be forced to have self-intersections. In this setting another fact to take into account is the existence of curves $\Ga$ solving the classical Plateau-Douglas problem for arbitrary fixed genus (see \cite{DiHiTr3}); the so called \emph{Douglas condition} on a curve $\Ga$ is sufficient for the solvability of the classical Plateau-Douglas, however no sufficient and necessary conditions are known, and this issue may be related to the minimization of the Willmore energy as formulated in this work.
\end{remark}

\subsection*{Organization}
\noindent The paper is organized as follows. In Section 2 we study problem \eqref{defpr} when $\Ga$ is a circumference, giving the first result of non-existence. In Section 3 we prove some lower bound estimates on non-embedded surfaces, completing the proof of Theorem \ref{1} in the case of the circular boundary. In Section 4 we mainly study the generalized problem \eqref{defpr2} applying Simon's ambient approach (\cite{SiEX}) and some techniques from \cite{ScBP}. Here we also study the minimization problem \eqref{defpr} suitably defined on asymptotically flat surfaces having a straight line as boundary, therefore completing the proof of Theorem \ref{1}; then we prove Theorem \ref{2} also using the results obtained in Section 1 and the results about such asymptotically flat surfaces as tools in the study of the general problem \eqref{defpr2}. Appendix A contains the definitions and the facts about varifold theory that are needed in this work. In Appendix B we collect some technical results from \cite{SiEX} for the convenience of the reader.

\textcolor{white}{text}


\section{$ \S^1 $ boundary datum}

\noindent Let
\begin{equation}\label{ref37} \S^1=\{ (x,y,z)\in\R^3|x^2+y^2=1,z=0 \}. \end{equation}
and let $D$ be the bounded planar region enclosed by $\S^1$. In this section we consider the following minimization problem.
\begin{equation} \label{problemS1}
\min \{\cW(\Phi)| \Phi:\Si_\cg\to\R^3 \mbox{ smooth immersion, } \Phi|_{\pa\Si_\cg}\to \S^1 \mbox{ smooth embedding} \}.
\end{equation}
Also, for a fixed $\cg$, let
$$ \cF:= \{\Phi:\Si_\cg\to\R^3 \mbox{ smooth immersion, } \Phi|_{\pa\Si_\cg}\to \S^1 \mbox{ smooth embedding} \}.$$

\noindent Let us introduce the notation:
\begin{equation}\label{eq1}
\begin{split}
	\forall r>0,c\in\R^3:\qquad &I_{r,c}:\R^3\sm\{c\}\to\R^3\sm\{0\},\quad I_{r,c}(p)=r^2\frac{p-c}{|p-c|^2},\\
	& I^{-1}_{r,c}:\R^3\sm\{0\}\to\R^3\sm\{c\},\quad I^{-1}_{r,c}(p)=c+r^2\frac{p}{|p|^2},
\end{split}
\end{equation}
Recall that by the classical Liouville's Theorem (\cite{Sp}), conformal maps on open sets of $\R^3$ are given by compositions of isometries, homotheties, and spherical inversions as defined in \eqref{eq1}.\\

\noindent It is useful to remember that, by \cite[Theorem 2.2]{BaKu}, if $\Phi:S_0\to\R^3$ is a smooth immersion of a closed surface and $c\in \Phi(S_0)$, and we define $\tilde\Phi: S_0\sm\Phi^{-1}(c)\to \R^3$ by $\tilde{\Phi}=I_{1,c}\circ \Phi$, then
\[
\cW(\tilde \Phi) = \cW(\Phi) - 4\pi \sharp\Phi^{-1}(c),
\]
where $\sharp\Phi^{-1}(c)$ denotes the cardinality of $\Phi^{-1}(c)$.\\

\noindent Without loss of generality we can assume that the orientation of the boundary is given by $t\mapsto (\cos t,\sin t,0)$, and the curvature vector of the boundary curve is just $\vec{k}(p)=-p$ for any $p\in \S^1$. Denoting by $co_\Phi$ the unit outward conormal, then $k_g(p)=\lgl \vec{k},-co_{\Phi}\rgl = \lgl p, co_{\Phi}(p)\rgl $ and
\begin{equation*}
\forall\Phi\in\cF: \qquad G(\Phi)=\int_{\S^1} \lgl p, co_{\Phi}(p)\rgl \, d\cH^1(p) .
\end{equation*}

\noindent Observe that if $\Phi\in\cF$, then $G(\Phi)\le2\pi$ with equality if and only if $co_\Phi(p)=p$ for any $p\in \S^1$.\\
We want to prove the following result.

\begin{thm} \label{nonexS1}
	For any genus $\cg\ge1$, problem \eqref{problemS1} has no minimizers and the infimum equals $\be_\cg-4\pi$.
\end{thm}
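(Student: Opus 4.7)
The plan is to prove the two assertions (the value of the infimum and non-existence) together via a single device: spherical inversion at a point of $\S^1$, which converts our problem into its asymptotically flat, straight-line-boundary counterpart (Theorem \ref{thmr}); the upper bound comes from an explicit gluing construction.

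\textbf{Upper bound.} To show $\inf_{\cF}\cW\le \be_\cg-4\pi$, I would start from an embedded closed genus-$\cg$ minimizer $\Si_0$ with $\cW(\Si_0)=\be_\cg$ (Simon, Bauer--Kuwert) and pick $p_0\in\Si_0$. By the inversion formula recalled above, $\tilde\Si_0:=I_{1,p_0}(\Si_0\sm\{p_0\})$ is an asymptotically flat immersion of genus $\cg$ with one end and $\cW(\tilde\Si_0)=\be_\cg-4\pi$. After an isometry the asymptotic tangent plane is $\{z=0\}$ and the non-flat part of $\tilde\Si_0$ lies in a small ball, so that rescaling by $\la\to 0^+$ makes the rescaled surface a $C^1$-small graph over $\{z=0\}$ outside any fixed ball around the origin. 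Pasting the rescaled compact part (inside $\{x^2+y^2\le\tfrac12\}$), a smooth graphical transition to the flat plane on an annulus (contributing at most $\ep$ to the Willmore energy by $C^1$-smallness), and the flat annulus $\{\tfrac34\le x^2+y^2\le 1,z=0\}$ terminating at $\S^1$, produces $\Phi_\ep\in\cF$ with $\cW(\Phi_\ep)\le\be_\cg-4\pi+\ep$; this is essentially the procedure indicated in the remark preceding the statement.

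\textbf{Lower bound.} Given $\Phi\in\cF$, pick $c\in\S^1$; since $\Phi|_{\pa\Si_\cg}$ is an embedding onto $\S^1$, the boundary preimage of $c$ is a unique point, and let $N_c^i\ge 0$ denote the number of interior preimages of $c$. Set $\tilde\Phi:=I_{1,c}\circ\Phi$ on $\Si_\cg\sm\Phi^{-1}(c)$; this is an asymptotically flat immersion of genus $\cg$ whose boundary is the straight line $\ell:=I_{1,c}(\S^1\sm\{c\})$. The central identity I would derive is the boundary-point analogue of the closed-surface conformal formula recalled in the excerpt, namely
\begin{equation*}
\cW(\tilde\Phi)+G(\tilde\Phi)\,=\,\cW(\Phi)+G(\Phi)-4\pi N_c^i-2\pi,
\end{equation*}
where the $-2\pi$ replaces the usual $-4\pi$ because the preimage of $c$ lies on $\pa\Phi$ (a clean check on the flat unit disk gives $\cW(D)+G(D)=2\pi$ while $\cW(I_{1,c}(D))+G(I_{1,c}(D))=0$). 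Since $\ell$ is straight, $G(\tilde\Phi)=0$; since $|co_\Phi(p)|=|p|=1$ for $p\in\S^1$, $G(\Phi)=\int_{\S^1}\lgl p,co_\Phi(p)\rgl\,d\cH^1\le 2\pi$. Hence
\begin{equation*}
\cW(\Phi)\,\ge\,\cW(\tilde\Phi)+4\pi N_c^i\,\ge\,\cW(\tilde\Phi),
\end{equation*}
and the bound $\cW(\Phi)\ge\be_\cg-4\pi$ reduces to the analogous lower bound for genus-$\cg$ asymptotically flat immersions with straight-line boundary, furnished by Theorem \ref{thmr}.

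\textbf{Non-existence and principal obstacle.} A minimizer $\Phi_0\in\cF$ would saturate the chain above: $N_c^i=0$ for every $c\in\S^1$, $G(\Phi_0)=2\pi$ (equivalently $co_{\Phi_0}(p)\equiv p$ on $\S^1$), and $\tilde\Phi_0$ would realize the infimum $\be_\cg-4\pi$ for the straight-line problem, contradicting the non-existence part of Theorem \ref{thmr}. Conceptually the delicate step is the boundary-point conformal formula and the attendant multiplicity bookkeeping; technically, the heaviest input is the asymptotically flat straight-line counterpart in Theorem \ref{thmr}, whose proof is the main task carried out by the varifold closing-up techniques developed in Section 4.
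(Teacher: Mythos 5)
Your reduction runs in the wrong direction and is circular within the logical structure of the paper. You invoke Theorem \ref{thmr} (the statement that the genus-$\cg$ asymptotically flat problem with straight-line boundary has infimum $\be_\cg-4\pi$ and no minimizer) as ``the heaviest input'' for both the lower bound and the non-existence. But Theorem \ref{thmr} is obtained from Proposition \ref{prop1} together with Corollary \ref{iinf}, and the proof of Corollary \ref{iinf} establishes the inequality $\cW(\Si_n)=(\cW+G)(\Si'_n)-2\pi\ge\be_\cg-4\pi-2\pi+G(\Si'_n)$ precisely by citing Theorem \ref{nonexS1}: the straight-line surfaces are inverted back to circle-boundary surfaces and the lower bound for the circle problem is applied. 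In other words, the paper derives the straight-line infimum from the circle infimum, whereas you derive the circle infimum from the straight-line one; neither statement has an independent proof in the paper, so your argument proves nothing without supplying a new, self-contained lower bound for the straight-line problem. (A secondary, lesser issue: your boundary-point inversion identity with the $-4\pi N_c^i$ term for interior preimages is plausible but is not what Lemma \ref{sphericalr} provides, which only treats inversion at a point of multiplicity one; you would need to prove the general version.)

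The paper's actual argument for Theorem \ref{nonexS1} avoids the straight-line problem entirely and relies instead on the closed-surface theory. Via Lemma \ref{cop}/Corollary \ref{cop'} (a conformal map pushing $G$ up to $2\pi$ while $\cW+G$ is preserved), a putative minimizer must have conormal $co(p)\equiv p$ on $\S^1$, so it glues $C^{1,1}$ to the flat exterior region $\{z=0,\,x^2+y^2\ge 1\}$. Inverting the extended surface at a suitable point produces a closed $C^{1,1}$ genus-$\cg$ surface, and one concludes either because a closed Willmore minimizer is analytic (hence cannot contain a planar disk), giving $\cW(\Si)>\be_\cg-4\pi$, or because a multiplicity-two point forces $\cW\ge 8\pi$ by Li--Yau, giving $\cW(\Si)\ge 4\pi>\be_\cg-4\pi$. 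Either way the minimizer would exceed the upper bound $\be_\cg-4\pi$ furnished by the gluing construction (your upper-bound paragraph is correct and matches the paper's Remark in the Introduction). If you want to keep your inversion-to-a-line strategy, you must first prove the straight-line lower bound by a method that does not pass back through the circle.
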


\noindent The proof is based in the following tool.

\begin{lemma}  \label{cop}
	Let $\Phi\in\cF$ and denote $\Si=\Phi(\Si_\cg)$. Then for any $\ep>0$ there is $F:U\to\R^3$ such that
	\begin{equation*}
	\begin{split}
		&i)\, U\con\R^3\mbox{ open, }\Si\con U,\\
		&ii)\, F:U\to F(U)\mbox{ conformal diffeomorphism,}\\
		&iii)\, \|co_{F\circ\Phi}(p)-p\|_{L^2(\S^1)} <\ep,\\
		&iv)\, 2\pi-G(F\circ\Phi)<\ep.
	\end{split}
	\end{equation*}
\end{lemma}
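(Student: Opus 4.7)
I begin by observing that $(iii)$ and $(iv)$ are essentially equivalent. Both $p$ and $co_{F\circ\Phi}(p)$ are unit vectors in $\R^3$ for every $p\in\S^1$, so $|co_{F\circ\Phi}(p)-p|^2=2-2\langle p,co_{F\circ\Phi}(p)\rangle$, and integrating over $\S^1$ gives
\[
\|co_{F\circ\Phi}-p\|_{L^2(\S^1)}^2 \;=\; 4\pi-2G(F\circ\Phi) \;=\; 2\bigl(2\pi-G(F\circ\Phi)\bigr).
\]
Hence $(iii)$ with tolerance $\ep$ yields $(iv)$ with tolerance $\ep^2/2$, and $(iv)$ with tolerance $\ep$ yields $(iii)$ with tolerance $\sqrt{2\ep}$. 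It is enough to produce $F$ satisfying either one, so I focus on $(iii)$.

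I would search for $F$ inside the four-dimensional Lie group $G_{\S^1}$ of M\"obius transformations of $\R^3$ preserving $\S^1$ setwise. The relevant subgroups are: the one-parameter family $F_h=T_{(0,0,h)}\circ I_{\sqrt{1+h^2},(0,0,h)}$ of sphere inversions (with $T_v$ translation by $v$), which fix $\S^1$ pointwise and whose differential at $p\in\S^1$ acts on the transverse plane $\mathrm{span}(p,e_3)$ as the reflection with matrix
\[
\begin{pmatrix} \tfrac{h^2-1}{h^2+1} & \tfrac{2h}{h^2+1}\\ \tfrac{2h}{h^2+1} & \tfrac{1-h^2}{h^2+1}\end{pmatrix};
\]
the one-parameter group of \emph{rotations around $\S^1$}, obtained by stereographically lifting $SO(2)$-rotations of $\R^4$ fixing the 2-plane containing $\S^1$, whose differentials act as ordinary $SO(2)$-rotations of $\mathrm{span}(p,e_3)$; and a two-parameter family of hyperbolic reparametrizations of $\S^1$ (for example, the inversions centered at $c=(d,0,0)$ with $r^2=d^2-1$ and $|d|>1$), which have two fixed points on $\S^1$ and concentrate arclength measure near the attracting one. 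Composing the first two subgroups realizes the full $O(2)$-action on each transverse plane.

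The construction of $F$ then proceeds in two steps. First, apply a hyperbolic reparametrization with large hyperbolicity parameter to concentrate arclength measure on $\S^1$ near a chosen attracting fixed point $p_0\in\S^1$, so that the contribution to $\|co_{F\circ\Phi}-p\|_{L^2(\S^1)}^2$ coming from $\S^1\setminus B_\de(p_0)$ is arbitrarily small. Second, inside $B_\de(p_0)$, use the $O(2)$-action on $\mathrm{span}(p_0,e_3)$ afforded by the $F_h$ and the rotations around $\S^1$ to align the tangent plane of $F(\Si)$ at $p_0$ with the plane $\{z=0\}$ of $\S^1$; since the outward unit vector in $T_{p_0}F(\Si)\cap\tau(p_0)^\perp$ contained in $\{z=0\}$ is exactly the radial vector $p_0$, this forces $co_{F\circ\Phi}(p_0)=p_0$, and by the smoothness of $\Phi$ up to $\pa\Si_{\cg}$ the conormal is close to the radial field uniformly throughout $B_\de(p_0)$.

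\textbf{Main obstacle.} The delicate point is making the concentration-and-alignment argument quantitative, so that the two error contributions combine to give a global $L^2$-defect below $\ep$. One needs explicit control on (a) the rate at which the hyperbolic reparametrization suppresses the outer integral, in terms of its boundary Jacobian; and (b) the uniform closeness of $co_{F\circ\Phi}$ to the radial field inside $B_\de(p_0)$, which requires exploiting the smoothness of $\Phi$ and the M\"obius-conformal structure of $dF$. One must also verify that $F$ is a genuine diffeomorphism on some open neighborhood $U\supset\Si$, which amounts to checking that the point sent to infinity by the underlying M\"obius transformation does not lie on $\Si$ --- a constraint that can always be arranged by slightly perturbing the center of the inversion.
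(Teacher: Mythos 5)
Your reduction of $(iii)$ to $(iv)$ via $|co(p)-p|^2=2-2\lgl p,co(p)\rgl$ is correct, and the family of maps you propose is the right one: the paper's $F$ in \eqref{conf} is precisely the conjugate by $I_{1,v}$ of a dilation $D_\al$ (a hyperbolic M\"obius map of $\S^1$ with repelling fixed point $v$ and attracting fixed point $-v$) composed with a rotation $\bR$ about the image line (a ``rotation around $\S^1$'' in your terminology). The gap is in your two error estimates, where you have the hyperbolic dynamics backwards. First, $\cH^1(\S^1\sm B_\de(p_0))$ is close to $2\pi$, so no amount of ``concentration of arclength measure near $p_0$'' makes $\int_{\S^1\sm B_\de(p_0)}|co_{F\circ\Phi}(q)-q|^2\,d\cH^1$ small: the integrand is merely bounded, and its pointwise smallness on this large set is exactly what has to be proved. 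Second, the conormal of $F(\Si)$ along $B_\de(p_0)\cap\S^1$ is the push-forward of the conormal of $\Si$ along $h^{-1}(B_\de(p_0))$, which is all of $\S^1$ except a tiny arc around the repelling fixed point $-p_0$; it therefore oscillates inside $B_\de(p_0)$ as much as the original conormal does on the whole of $\S^1$, and smoothness of $\Phi$ does not give uniform closeness to the radial field there (the modulus of continuity of $co_{F\circ\Phi}$ degenerates as the hyperbolicity parameter grows).

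The alignment must be performed at the \emph{repelling} fixed point. Writing $co_\Phi(p)=\cos\te(p)\,p+\sin\te(p)\,e_3$, the computations \eqref{ref16'}--\eqref{ref17'} show that the differential of a hyperbolic element $h$ preserving $\S^1$ carries the radial direction at $p$ to the radial direction at $h(p)$ and acts conformally on the transverse plane, so $\te_{h\circ\Phi}=\te_\Phi\circ h^{-1}$ and
\begin{equation*}
\int_{\S^1}|co_{h\circ\Phi}(q)-q|^2\,d\cH^1(q)=\int_{\S^1}\big(2-2\cos\te_\Phi(p)\big)\,|h'(p)|\,d\cH^1(p),
\end{equation*}
where the Jacobian measure $|h'|\,d\cH^1$ (of total mass $2\pi$) concentrates at $-p_0$ as the hyperbolicity parameter grows, so the integral tends to $2\pi\big(2-2\cos\te_\Phi(-p_0)\big)$. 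Hence one must first rotate around $\S^1$ so that the conormal is radial at the repelling fixed point; afterwards the defect is small on the large set $\S^1\sm B_\de(p_0)$, and the only uncontrolled contribution sits on $B_\de(p_0)$, where small measure together with the bound $|co-q|\le2$ suffices. This is exactly what the paper does in inverted coordinates: after $I_{1,v}$ the surface is asymptotically flat, $D_\al$ flattens it onto its asymptotic half-plane outside a small ball (whose preimage in $\S^1$ is a short arc around the attracting point $-v$), and $\bR$ aligns the asymptotic conormal with $(-1,0,0)$, which $dI^{-1}_{1,v}$ carries to the radial field. Your scheme can be repaired by exchanging the roles of the two fixed points, but as written both halves of the estimate fail.
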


\begin{proof}
	If $G(\Phi)=2\pi$, then $co_\Phi(p)=p$ and $F=id|_{\R^3}$ works. So suppose in general that $G(\Phi)<2\pi$. Let $T_q$ and $D_\al$ be the maps
	\begin{equation*}
	\begin{split}
		&\forall\al>0:\quad D_\al(p)=\al p \qquad \forall p\in\R^3,\\
		&\forall q\in\R^3: \quad T_q(p)=p+q \qquad \forall p\in\R^3.
	\end{split}
	\end{equation*}
	Consider the point $(-1,0,0)=:v\in \S^1$ and the inversion $I_{1,v}$. Note that $I_{1,v}$ maps $\S^1\sm\{v\}$ onto the line $r_{-v/2}$ passing through the point $-v/2$, lying in the plane of $\S^1$ and parallel to $T_v \S^1$.\\
	Let $\bR$ be a rotation in $\R^3$ with axis $\{x=z=0\}$, we claim that the desired map $F$ is
	\begin{equation} \label{conf}
	F(p)=\begin{cases}
		I_{1,v}^{-1}\circ T_{-v/2}\circ \bR\circ D_\al \circ T_{v/2} \circ I_{1,v} (p) =   v+\frac{\al\bR\big[\frac{p-v}{|p-v|^2}+\frac{v}{2}\big]-\frac{v}{2}}{\big| \al\bR\big[\frac{p-v}{|p-v|^2}+\frac{v}{2}\big]-\frac{v}{2} \big|^2} & p\in U\sm\{ v\},\\
		v & p=v.
	\end{cases}
	\end{equation}
	for suitable choice of $\al\in(0,1)$ and rotation $\bR$, and $F$ is defined on
	\begin{equation} \label{defU}
	U=\R^3\sm \bigg\{I^{-1}_{1,v}\bigg(-\frac{1}{2}\bigg( \frac{1}{\al}\bR^{-1}[v]+v \bigg) \bigg)   \bigg\}.
	\end{equation}\\
	\noindent The surface $I_{1,v}(\Si)$ is an asymptotically flat manifold with $K$ ends, where $K\ge1$ is the multiplicity of $v$ in $\Si$. For $\be,\ga,\de\in(0,1)$ arbitrarily small there exist $\al=\al(\be,\ga)\in(0,1)$ sufficiently small and suitable $\bR=\bR(\de)$ so that
	\begin{equation} \label{ref-1}
	d_{C^1}\bigg(\bR\circ D_\al\circ T_{v/2} \circ I_{1,v} (\Si)\sm B_\ga(0), \bigcup_{i=1}^K \Pi_i  \sm B_\ga(0) \bigg)<\be,
	\end{equation}
	for a half plane $\Pi_1$ and planes $\Pi_2,...,\Pi_K$ passing through the origin with $\pa\Pi_1=\{x=y=0\}$, and
	\begin{equation} \label{ref0}
	\begin{split}
		& \lgl co_{\Pi_1},(-1,0,0)\rgl > 1-\de,\\
		&0\not\in T_{-v/2}\circ \bR\circ D_\al \circ T_{v/2} \circ I_{1,v} (\Si),
	\end{split}
	\end{equation}
	where $\,co_{\Pi_1}$ is the conormal vector of $\Pi_1$.\\
	Note that condition $0\not\in T_{-v/2}\circ \bR\circ D_\al \circ T_{v/2} \circ I_{1,v} (\Si)$ in \eqref{ref0} is equivalent to $I_{1,v}(p)\not=-\frac{1}{2}\big(\frac{1}{\al}\bR[v]+v \big)$ for any $p\in\Si$; and since $\frac{1}{\al}\bR[v]+v \neq 0$ for any $\al<1$, such condition is equivalent to $I^{-1}_{1,v}\big(-\frac{1}{2}\big(\frac{1}{\al}\bR[v]+v \big) \big)\not\in\Si$ which justifies the definition of $U$. So by \eqref{ref0} the function $F$ is well defined on $U$ and $\Si\con U$.\\
	Now for any $p\in U\sm\{v\}$ we have
	\begin{equation*}
	F(p)=v+ \frac{\al\bR[p-v]+|p-v|^2w}{\big| \al\bR\big[\frac{p-v}{|p-v|} +|p-v|w\big] \big|^2}, \qquad w:= \frac{\al}{2}\bR[v] -\frac{v}{2},
	\end{equation*}
	which is checked to be of class $C^1(U)$ and conformal with the definition $F(v)=v$ (one has $dF_v=\frac{1}{\al}\bR$). By the regularity Theorem 3.1 in \cite{LiSa} we conclude that $F$ is actually smooth.\\
	
	\noindent The inverse map $I_{1,v}^{-1}$ has differential
	\begin{equation*}
	d(I^{-1}_{1,v})_q=\frac{1}{|q|^2}\bigg( id -\frac{2}{|q|^2}q\tens q \bigg).
	\end{equation*}
	Hence taking $q=(1/2,t,0)\in r_{-v/2}$, $e_3=(0,0,1)$ and $X\in (T_q(r_{-v/2}))^\perp$ we have $d(I^{-1}_{1,v})_q(e_3)=\frac{1}{|q|^2}e_3$ and thus
	\begin{equation} \label{ref16'}
	\bigg( \frac{d(I^{-1}_{1,v})_q(X)}{|d(I^{-1}_{1,v})_q(X)|} \bigg)_3:= \frac{\lgl d(I^{-1}_{1,v})_q(X), e_3\rgl}{|d(I^{-1}_{1,v})_q(X)|}=\frac{\lgl d(I^{-1}_{1,v})_q(X),d(I^{-1}_{1,v})_q(e_3)\rgl}{|d(I^{-1}_{1,v})_q(X)||d(I^{-1}_{1,v})_q(e_3)|}=\frac{\lgl X, e_3 \rgl}{|X|}=: \bigg(\frac{X}{|X|} \bigg)_3,
	\end{equation}
	that is the "vertical" component of a vector is preserved.
	Moreover for $q=(1/2,t,0)\in r_{-v/2}$ we have
	\begin{equation} \label{ref17'}
	\frac{d(I^{-1}_{1,v})_q((-1,0,0))}{|d(I^{-1}_{1,v})_q((-1,0,0))|} =(-1,0,0)+\frac{q}{|q|^2}=I^{-1}_{1,v}(q),
	\end{equation}
	that is the tangent vector $(-1,0,0)$ at a point $q=I_{1,v}(p)$ is precisely mapped into the tangent vector equal to $p$ at $I_{1,v}^{-1}(q)=p$.\\ 
	Since spherical inversions preserve the orientation, the field $\frac{d(I^{-1}_{1,v})_q(co_{\Pi_1})}{|d(I^{-1}_{1,v})_q(co_{\Pi_1})|}$ coincides with $co_{F\circ \Phi}$. Hence putting together \eqref{ref-1}, \eqref{ref0}, \eqref{ref16'} and \eqref{ref17'} and choosing $\al,\be,\ga$ sufficiently small, we have the thesis.
\end{proof}

\begin{cor} \label{cop'}
	If $\Phi\in\cF$ is such that $G(\Phi)<2\pi$, then there is $\Phi'\in\cF$ such that $\cW(\Phi')<\cW(\Phi)$.
	In particular a minimizer $\Phi$ of problem \eqref{problemS1} must satisfy $co_\Phi(p)\equiv p$.
\end{cor}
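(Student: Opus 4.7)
The plan is to combine Lemma \ref{cop} with the conformal invariance of $\cW+G$ recalled in the introduction (cf.\ \cite{Ch}). Given $\Phi\in\cF$ with $G(\Phi)<2\pi$, I would set $\de:=2\pi-G(\Phi)>0$, pick $\ep\in(0,\de)$, and let $F:U\to F(U)$ be the conformal diffeomorphism produced by Lemma \ref{cop} for this $\ep$; in particular $\Phi(\Si_\cg)\subset U$ and $G(F\circ\Phi)>2\pi-\ep$. The candidate competitor is $\Phi':=F\circ\Phi$.

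Before invoking conformal invariance one must verify that $\Phi'$ actually lies in $\cF$, i.e.\ that $F$ maps $\S^1$ onto itself. Tracing the explicit composition \eqref{conf}: $I_{1,v}$ sends $\S^1\sm\{v\}$ onto the line $r_{-v/2}$, the translation $T_{v/2}$ maps $r_{-v/2}$ onto the $y$-axis, the dilation $D_\al$ preserves the $y$-axis, the rotation $\bR$ fixes it (its axis is exactly $\{x=z=0\}$), $T_{-v/2}$ sends the $y$-axis back to $r_{-v/2}$, and $I_{1,v}^{-1}$ takes $r_{-v/2}$ back to $\S^1\sm\{v\}$; combined with $F(v)=v$, this yields $F(\S^1)=\S^1$ set-wise. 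Hence $\Phi'|_{\pa\Si_\cg}$ is still a smooth embedding onto $\S^1$ and $\Phi'\in\cF$.

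Applying the conformal invariance identity $\cW(\Phi')+G(\Phi')=\cW(\Phi)+G(\Phi)$ I obtain
\begin{equation*}
\cW(\Phi')-\cW(\Phi)=G(\Phi)-G(\Phi')<(2\pi-\de)-(2\pi-\ep)=\ep-\de<0,
\end{equation*}
which gives the first assertion. For the \emph{in particular} part, at a minimizer $\Phi$ of \eqref{problemS1} one must have $G(\Phi)=2\pi$, for otherwise the previous step would produce a competitor with strictly smaller energy. Since $p$ and $co_\Phi(p)$ are both unit vectors for every $p\in\S^1$, equality in $2\pi=\int_{\S^1}\lgl p, co_\Phi(p)\rgl\,d\cH^1$ forces $co_\Phi(p)\equiv p$ by Cauchy--Schwarz.

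The only step deserving attention is the set-wise invariance $F(\S^1)=\S^1$, which is what ensures $\Phi'$ is a legitimate competitor; everything else is immediate from Lemma \ref{cop} and conformal invariance, so there is no genuine obstacle beyond having the right conformal transformation already prepared by the previous lemma.
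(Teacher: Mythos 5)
Your proof is correct and follows exactly the paper's route: apply Lemma \ref{cop} and use the conformal invariance of $\cW+G$ so that an increase in $G$ forces a decrease in $\cW$. The extra details you supply --- the set-wise verification $F(\S^1)=\S^1$ (so that $\Phi'\in\cF$) and the equality case $G(\Phi)=2\pi\Leftrightarrow co_\Phi(p)\equiv p$ --- are left implicit in the paper (the latter is stated just before Theorem \ref{nonexS1}), and both are handled correctly.
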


\begin{proof}
	Apply Lemma \ref{cop} recalling that the quantity $\cW+G$ is conformally invariant, so that if $G$ increases then $\cW$ must decrease.
\end{proof}

\begin{proof}[Proof of Theorem \ref{nonexS1}]
	Assume by contradiction that a minimizer $\Si$ exists. Then by Corollary \ref{cop'} the conormal $co$ of $\Si$ is identically equal to the field $p$. Let $\Si^{ext}:=\Si\cup \{z=0,x^2+y^2\ge1  \}\in C^{1,1}$
	%
	.
	Now two possibilities can occur.\\
	Suppose first there exists $\bar{p}\in int(D)$ such that $\bar{p}\not\in\Si$, where $D$ is the closed disk enclosed by $\S^1$. Then $I_{1,\bar{p}}(\Si^{ext})\cup \{0\}=:\Si'$ is a well defined surface of class $C^{1,1}$ without boundary with genus $\cg$. Also $\Si'\supset D$, then $\Si'$ cannot be a minimizer for the Willmore energy among closed surfaces of genus $\cg$, otherwise $\Si'$ would be analytic (\cite[Theorem I.3]{Ri08}) and equal to the plane containing $D$. Hence $\cW(\Si')>\be_\cg$, and then $\cW(\Si)=\cW(\Si^{ext})>\be_\cg -4\pi $. Since the infimum of our problem is $\le \be_\cg-4\pi$, this implies that $\Si$ could not be a minimizer.\\
	Suppose now the other case: $D\con\Si$. In this case the whole plane containing $D$ is contained in $\Si^{ext}$, which has genus $\cg\ge1$, then there exists a point $q\in\Si^{ext}$ with multiplicity $\ge2$. Now let $x\in\R^3\sm\Si^{ext}$, then $\Si':=I_{1,x}(\Si^{ext})\cup\{0\}$ is a $C^{1,1}$ closed surface of genus $\cg$ with a point of multiplicity $\ge2$, then $\cW(\Si')\ge8\pi$ and $\cW(\Si)=\cW(\Si^{ext})\ge 4\pi$. Since the infimum of our problem is $\le \be_\cg-4\pi<4\pi$, this implies that $\Si$ could not be a minimizer.\\
	Finally, for any $\ep>0$ we know that
	$$\inf_{\Si\in\cF} \cW =\inf_{\Si\in\cF,G(\Si)\ge2\pi-\ep}\cW,$$
	then, since $\inf_{\Si\in\cF} \cW\le \be_\cg-4\pi$, by the above argument we conclude that
	$$\inf_{\Si\in\cF} \cW = \inf_{\Si\in\cF,G(\Si)=2\pi}\cW=\be_\cg-4\pi.$$
\end{proof}

\textcolor{white}{text}


\section{Lower bound estimates and consequences}

\noindent We derive lower bounds on immersed surfaces with boundary with a point of multiplicity greater than one. From a variational viewpoint, this may allow us to restrict the set of competitors of a minimization problem to embedded surfaces.\\

\noindent Let $0<\si<\ro$ and $p_0\in\R^3$. Let us recall here a monotonicity formula for varifolds with boundary, which can be obtained by integrating the tangential divergence of the field $X(p)=\big( \frac{1}{|p-p_0|^2_\si} - \frac{1}{\ro^2} \big)_+(p-p_0)$, where $|p-p_0|^2_\si=\max\{\si^2,|p-p_0|^2\}$ and $(\cdot)_+$ denotes the positive part (see \cite{NoPo20} and \cite{RiLI}). If $V$ is an integer rectifiable curvature varifold with boundary with bounded Willmore energy, with $\nu$ the induced measure in $\R^3$, and generalized boundary $\si_V$, it holds
\begin{equation} \label{monot}
A(\si)+\int_{B_\ro(p_0)\sm B_\si (p_0) } \bigg| \frac{\vec{H}}{2} +\frac{(p-p_0)^\perp}{|p-p_0|^2} \bigg|^2\,d\nu(p) = A(\ro),
\end{equation}
where
\begin{equation*}
A(\ro):= \frac{\nu(B_\ro(p_0))}{\ro^2}+\frac{1}{4}\int_{B_\ro(p_0)} |H|^2\,d\nu(p)+ R_{p_0,\ro},
\end{equation*}
and
\begin{equation*}
\begin{split}
	R_{p_0,\ro}&:= \int_{B_\ro(p_0)} \frac{\lgl \vec{H}, p-p_0\rgl}{\ro^2}\,d\nu(p) + \frac{1}{2}\int_{B_\ro(p_0)} \bigg( \frac{1}{|p-p_0|^2}-\frac{1}{\ro^2} \bigg)(p-p_0) \,d\si_V(p)=\\& =: \int_{B_\ro(p_0)} \frac{\lgl \vec{H}, p-p_0\rgl}{\ro^2}\,d\nu(p) + T_{p_0,\ro}.
\end{split}
\end{equation*}
In particular the function $\ro\mapsto A(\ro)$ is monotonically nondecreasing.\\

\noindent Let us denote by $I:\R^3\sm\{0\}\to\R^3\sm\{0\}$ the standard spherical inversion $I(x)=\frac{x}{|x|^2}$.\\

\noindent Also, we need to introduce the minimization problem of the Willmore energy among complete unbounded surfaces with a straight line as boundary, leading to some tools that we will use in the following.\\
Let $r\con\R^3$ be a straight line and fix an integer $\cg\ge 1$. We consider the minimization problem
\begin{equation} \label{problemr}
\min \{\cW(\Si)| \Si\in\cA \},
\end{equation}
where we say that $\Si\in \cA$ if
\begin{itemize}
	\item $\Si$ is a smooth immersed asymptotically flat surface with one end of genus $\cg$ with boundary $r$.
\end{itemize}
Recalling the definitions given in the Introduction, one has $D(\Si)<+\infty$ for $\Si\in \cA$.
We are going to see that problem \eqref{problemr} is strongly related to another minimization problem on the following family of surfaces.\\
We say that $\Si\in \cB$ if
\begin{enumerate}
	\item $\Si=\Phi(\Si_\cg)$ and $\Phi|_{\pa\Si_\cg}:\pa\Si_\cg\to \S^1$,
	\item there exists $p\in \S^1$ such that $\Phi^{-1}(p)=\{p_0\}$,
	\item $\Phi$ is smooth on $\Si_\cg\sm\{p_0\}$ and $\Phi$ is globally of class $C^{1,1}$.
\end{enumerate}
where $\S^1$ is a given round circle. Observe that if $\Si\in\cB$ and $p\in \pa \Si$ verifies $m(p)=1$, then $I(\Si\sm\{p\})\in\cA$ up to translation. Analogously if $\Si\in\cA$, then $I(\Si)\cup\{0\}\in\cB$.\\

\noindent Let us remind that by the classical Gauss-Bonnet Theorem, if $S$ is a smooth compact oriented surface with piecewise smooth boundary $\pa S$ positively oriented with respect to $S$, then the quantity $F(S):=\int_SK_S+G(S)+\al(S)$ is a topological invariant. Here $K_S$ is the Gaussian curvature and $\al(S)$ is the sum of the angles described by the tangent vector of a positive parametrization of $\pa S$ at the corners of $\pa S$, such angles being counted with the given orientation of $S$ and $\pa S$.

\begin{lemma} \label{sphericalr}
	If $\Si\in \cB$ and $I$ is a spherical inversion with center at $p\in \S^1$ point of multiplicity $1$ in $\Si$, then $I(\Si\sm\{p\})\in\cA$ for some line $r$ and
	\begin{equation*}
	(\cW+G)(\Si)=\cW(I(\Si\sm\{p\}))+2\pi.
	\end{equation*}
	Similarly if $\Si\in \cA$ and $I$ is a spherical inversion with center at a point $p\not\in \Si$, then up to isometry $I(\Si)\cup\{0\}\in\cB$ and
	\begin{equation*}
	(\cW+G)(I(\Si)\cup\{0\})=\cW(\Si)+2\pi.
	\end{equation*}
\end{lemma}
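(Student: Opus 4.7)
The key tool is the conformal invariance of $\cW+G$ for compact smooth surfaces with boundary, recalled in the introduction after equation \eqref{defwill}. The two assertions are dual to each other via inversion, so I would prove the first statement by an exhaustion argument and deduce the second from the first using that the composition of two spherical inversions is a similarity of $\R^{3}$. The core of the first statement is to apply the conformal invariance of $\cW+G$ to the compact truncation $\Si_\ep:=\Si\setminus B_\ep(p)$ and let $\ep\to 0$, carefully tracking the contribution of the new boundary pieces to $G$.

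\textbf{First statement: geometric set-up.} I first verify $I(\Si\setminus\{p\})\in\cA$. Since $I=I_{R,p}$ is a conformal diffeomorphism of $\R^{3}\setminus\{p\}$ onto $\R^{3}\setminus\{0\}$, the genus is preserved, and $I$ sends the circle $\S^{1}\ni p$ to a straight line $r$. The $C^{1,1}$-regularity of $\Si$ at $p$, combined with the multiplicity being $1$, gives the asymptotic flatness of $I(\Si\setminus\{p\})$ with a single end, along the same lines as the remark just after the definition of $\cA$. Finiteness of $D$ on the image follows from the conformal invariance of the tracefree integral $\int(|\sff|^{2}-2|H|^{2})\,d\mu_g$ together with finiteness of $\cW(\Si)$ and of the area of $\Si$.

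\textbf{First statement: passage to the limit.} For small $\ep$, $\Si_\ep$ is compact and piecewise smooth, with boundary consisting of the arc $\S^{1}\setminus B_\ep(p)$ and the small curve $c_\ep:=\Si\cap\pa B_\ep(p)$. The latter is a \emph{semicircle}, not a full circle, because $p\in\pa\Si$ has multiplicity $1$. The interior angles between the arc and $c_\ep$ tend to $\pi/2$ as $\ep\to0$, and since $I$ is conformal the image corners on the inverted side carry the same angles, so any corner contributions cancel in the comparison. Applying the conformal invariance of $\cW+G$:
\[
\cW(\Si_\ep)+G(\Si_\ep)=\cW(I(\Si_\ep))+G(I(\Si_\ep)).
\]
As $\ep\to0$, the Willmore terms converge monotonically to $\cW(\Si)$ and $\cW(I(\Si\setminus\{p\}))$, and the contribution of the arc to $G(\Si_\ep)$ tends to $G(\Si)$. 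By $C^{1,1}$-regularity at $p$, $c_\ep$ is $C^{1}$-close to a round semicircle of radius $\ep$ in the tangent half-plane $T_p\Si$; with the paper's sign convention (outward conormal of $\Si_\ep$ along $c_\ep$ points \emph{into} $B_\ep(p)$) one computes $(k_g)_{\Si_\ep}\sim-1/\ep$ and length $\sim\pi\ep$, so $\int_{c_\ep}(k_g)_{\Si_\ep}\,d\cH^{1}\to-\pi$. Dually, $I(c_\ep)$ is $C^{1}$-close to a semicircle of radius $R^{2}/\ep$ in the asymptotic plane with outward conormal of $I(\Si_\ep)$ pointing \emph{outward}, giving contribution $+\pi$; the image $I(\S^{1}\setminus B_\ep(p))\subset r$ is a straight segment with $k_g\equiv0$. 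Collecting the limits yields the identity.

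\textbf{Second statement and main obstacle.} For $\Si\in\cA$ and $I=I_{R,p}$ with $p\notin\Si$, the image of the boundary line $r$ (which does not pass through $p$) is the circle $I(r)\cup\{0\}$ through $0=I(\infty)$; after a rigid motion and a suitable choice of $R$ this circle can be placed at $\S^{1}$. The decay condition at the unique end of $\Si$ provides a $C^{1,1}$ extension of $I(\Si)$ across $0$ with $0$ of boundary multiplicity $1$, so $\tilde\Si:=I(\Si)\cup\{0\}\in\cB$ up to isometry. Applying the just-proved first statement to $\tilde\Si$ at the point $0$ using $J:=I_{1,0}$ gives
\[
(\cW+G)(\tilde\Si)=\cW\bigl(J(\tilde\Si\setminus\{0\})\bigr)+2\pi=\cW(J\circ I(\Si))+2\pi,
\]
and a direct computation yields $J\circ I(x)=(x-p)/R^{2}$, a similarity of $\R^{3}$, so $\cW(J\circ I(\Si))=\cW(\Si)$ by the translation and scaling invariance of $\cW$. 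The main technical obstacle is the asymptotic analysis of $G$ on $c_\ep$ and $I(c_\ep)$: one must verify both the $C^{1}$-approximation of $c_\ep$ by a round semicircle in the tangent half-plane using only $C^{1,1}$-regularity, and that the outward-conormal convention yields the correct signs on the two sides. An alternative route, bypassing the explicit geodesic-curvature computation, is to apply Gauss--Bonnet on both $\Si_\ep$ and $I(\Si_\ep)$ (which share the same Euler characteristic since $I$ is a diffeomorphism) and exploit the transformation law of $\int K\,d\mu_g$ under conformal changes, noting again that the corner angles cancel because inversions preserve angles.
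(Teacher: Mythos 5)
Your proof of the first identity is essentially the paper's: truncate at $\Si\setminus B_\ep(p)$, use conformal invariance, and compute that the small boundary semicircle contributes $-\pi$ to $G$ on one side and $+\pi$ on the other, with the corner angles cancelling because inversions are conformal. The only presentational difference is that you invoke the invariance of $\cW+G$ directly on the piecewise-smooth truncation; the paper instead uses the pointwise conformal invariance of $(|H|^2-K)\,g$ from \cite{Ch} (after smoothing the corners) and then adds the Gauss--Bonnet invariant $\int K+G+\al$ with the explicit corner term $\al$, which is exactly the ``alternative route'' you sketch at the end --- so you should regard that alternative as the rigorous version of your main argument, since the statement of conformal invariance of $\cW+G$ is only quoted for smooth boundaries. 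Your signs and limits ($k_g\sim-1/\ep$ with length $\sim\pi\ep$, and the $+\pi$ on the inverted side with the straight segment contributing $0$) agree with the paper's computation of $G(\Si_r)\to G(\Si)-\pi$, $G(I(\Si_r))\to\pi$, $\al\to\pi$ on both sides. Where you genuinely depart from the paper is the second statement: the paper dismisses it as ``completely analogous'', whereas you deduce it from the first by observing that $I_{1,0}\circ I_{R,p}(x)=(x-p)/R^2$ is a similarity, so that inverting back at the added point returns $\Si$ up to scaling. This is a clean shortcut that avoids repeating the limit analysis at infinity, at the modest cost of having to check that $I(\Si)\cup\{0\}$ really lies in $\cB$ with the added point of multiplicity one (which you do, and which the paper needs anyway). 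Both routes are sound.
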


\begin{proof}
	It is enough to prove the first part of the statement, being the second part completely analogous. So let $\Si\in\cB$. A rotation and a translation yields a surface still denoted by $\Si$ with boundary $\{ (x-1)^2+y^2=1,z=0 \}$ and with the origin point of multiplicity $1$ in $\Si$. The standard inversion $I$ maps $\pa\Si$ onto the line $\{(-1/2,t,0)|t\in\R \}$ and clearly $I(\Si\sm\{p\})\in\cA$.\\
	Consider $\Si_r:=\overline{\Si\sm B_r(0)}$ for $r$ sufficiently small so that $\overline{\Si\cap B_r(0)}$ is homeomorphic to a closed disk. Let $r$ be fixed for the moment. The boundary of $\Si_r$ is just piecewise smooth, but we can approximate the surface $\Si_r$ (and then $I(\Si_r)$) by surfaces with smooth boundary diffeomorphic to $\Si_r$, so that, since as shown in \cite{Ch} the quantity $(|H|^2-K)g$ is pointwise conformally invariant, we get that
	\begin{equation} \label{ref20}
	\int_{\Si_r} |H_{\Si_r}|^2-K_{\Si_r} = \int_{ I(\Si_r)} |H_{I(\Si_r)}|^2-K_{I(\Si_r)},
	\end{equation}
	for any $r$ small. Observe that $ \int_{\Si_r} |H_{\Si_r}|^2\to \int_{\Si} |H_{\Si}|^2 $ and $\int_{ I(\Si_r)} |H_{I(\Si_r)}|^2\to \int_{ I(\Si\sm\{p\})} |H_{I(\Si\sm\{p\})}|^2$ as $r\to0$. So we now look at the change of the integral in the Gaussian curvature studying the Gauss-Bonnet topological invariant $F(S):=\int_S K_S+ G(S)+ \al(S)$, where $S$ is a smooth surface with piecewise smooth boundary and $\al(S)$ in the term taking into account the oriented angles determined by the possible corners of $\pa S$.\\
	Up to the choice of the orientation of $\Si$, we can assume $\S^1$ to be positively oriented with respect to $\Si$ with the usual counterclockwise orientation. As $r\to0$ the boundary curve $\pa B_r(0)\cap\Si$ is close in $C^2$ norm to a half circumference lying in $T_0\Si$, by construction oriented with its curvature vector $\vec{k}_r$ such that $r\lgl\vec{k}_r,co_{\Si_r}\rgl \to 1$ uniformly as $r\to0$. Then $G(\Si_r)\to -\pi+G(\Si)$ as $r\to0$. Also by the choice of the orientation we get $\al(\Si_r)\to\pi$ as $r\to0$.\\
	The boundary of $\Si_r$ is mapped by $I$ onto a segment $s_r:=\{(-1/2,t,0):|t|\le t_r \}$ union with $I(\pa B_r(0)\cap\Si)$ which is close in $C^2$ norm to a half circumference as $r\to0$. Since $I$ preserves the orientation and maps point closer to the origin to point farther from the origin (and viceversa) we now have that the curvature vector $\vec{k}'_r$ of $I(\pa B_r(0)\cap\Si)$ is such that $\bigg\lgl\frac{\vec{k}'_r}{|\vec{k}'_r|},co_{I(\Si_r)}\bigg\rgl \to -1$ uniformly on $I(\pa B_r(0)\cap\Si)$ as $r\to0$. Then $G(I(\Si_r))\to \pi$ as $r\to0$. Also by the orientation preserving property we still have $\al(I(\Si_r))\to\pi$ as $r\to0$.\\
	Therefore adding $F(\Si_r)=F(I(\Si_r))$ to equation \eqref{ref20} and passing to the limit $r\to0$ we get the claim.
\end{proof}

\begin{lemma} \label{norma}
	For any $\om<4\pi$ exists $\ep>0$ such that if $g:\Si_\cg\to \R^3$ is an immersion with $g(\pa\Si_\cg)=\S^1$, outward conormal field $co$ and
	\begin{equation*}
	\exists p_0\in\R^3:\quad \sharp g^{-1}(p_0)\ge 2,
	\end{equation*}
	\begin{equation*}
	\| co(p) - p \|_{L^2(\S^1)}\le\ep,
	\end{equation*}
	then $\cW(g)\ge \omega$.
\end{lemma}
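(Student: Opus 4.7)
My plan is to argue by contradiction, combining a varifold compactness argument with the gluing-and-inversion strategy from the proof of Theorem \ref{nonexS1}. Suppose the statement fails: there exist $\omega_0<4\pi$ and a sequence $g_n\in\cF$ with points $p_{0,n}\in\R^3$ satisfying $\sharp g_n^{-1}(p_{0,n})\ge 2$, conormals $co_n$ with $\|co_n-p\|_{L^2(\S^1)}\to 0$, and $\cW(g_n)\le\omega_0$.

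First I extract a limit. The uniform bound $\cW(g_n)\le\omega_0<4\pi$ together with the fixed boundary $\S^1$ and the monotonicity formula \eqref{monot} give a uniform bound on the diameter of $g_n(\Sigma_\cg)$, so up to a subsequence $p_{0,n}\to p_0^*\in\R^3$ and the associated integer rectifiable curvature varifolds $V_n$ converge (in the sense of varifolds, by the standard compactness theory for curvature varifolds with boundary recalled in Appendix A) to a limit curvature varifold $V$ with boundary. By lower semicontinuity of $\cW$ one has $\cW(V)\le\omega_0$, and the $L^2$-convergence $co_n\to p$ forces the generalized boundary measure $\sigma_V$ to be exactly $p\,d\cH^1\res\S^1$. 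Finally, applying the monotonicity formula uniformly at $p_{0,n}$ and passing to the limit yields $\theta_V(p_0^*)\ge 2$ (counting any boundary preimage with its half-weight).

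Now I compactify by gluing. Let $E:=\{(x,y,0):x^2+y^2\ge 1\}$, whose outward conormal along $\S^1$ equals $-p$, and set $V^{ext}:=V\cup E$ as varifolds. Since the generalized boundaries of $V$ and $E$ cancel exactly, $V^{ext}$ has no generalized boundary and is a complete $C^{1,1}$ immersed surface of genus $\cg$ with one flat planar end; moreover $\cW(V^{ext})=\cW(V)$ because $E$ is flat, and the density of $V^{ext}$ at $p_0^*$ is at least $2$ in each possible location of $p_0^*$ (the case $p_0^*\in\S^1$ uses that the two half-weight boundary contributions combine to a full density-$1$ sheet, which adds to the at-least-one interior preimage of $g$ mapping to $p_0^*$). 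Choose $x\in\R^3\sm V^{ext}$, which exists because $V$ has bounded diameter and $V^{ext}\sm V$ lies in the plane $\{z=0\}$, and apply the inversion $I_{1,x}$ from \eqref{eq1}. The image $\Sigma':=I_{1,x}(V^{ext})\cup\{0\}$ is a closed compact $C^{1,1}$ immersed surface of genus $\cg$, in which $\{0\}$ is the image of the flat end and carries density $1$.

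I now conclude exactly as in the second case of the proof of Theorem \ref{nonexS1}. By \cite[Theorem 2.2]{BaKu} applied to the closed surface $\Sigma'$ together with the $4\pi$ contribution from the density-$1$ compactification point $\{0\}$ that matches the flat end of $V^{ext}$ (cf.\ the Remark following the introduction), one obtains
\[
\cW(\Sigma')=\cW(V^{ext})+4\pi=\cW(V)+4\pi.
\]
Since inversion preserves multiplicity, $\Sigma'$ contains a point of multiplicity $\ge 2$, namely $I_{1,x}(p_0^*)$, so the Li--Yau inequality gives $\cW(\Sigma')\ge 8\pi$. Combining, $\cW(V)\ge 4\pi$, contradicting $\cW(V)\le\omega_0<4\pi$. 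The main technical difficulty is the compactness step: establishing the uniform diameter bound on $g_n(\Sigma_\cg)$ so that the sequence $p_{0,n}$ stays in a compact set, and identifying the limiting generalized boundary measure exactly as $p\,d\cH^1\res\S^1$---this exactness is what enables the gluing with $E$ to produce a boundary-free $C^{1,1}$ varifold, and hence to apply the closed-surface Li--Yau inequality.
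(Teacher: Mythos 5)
Your strategy is genuinely different from the paper's (which inverts at a simple boundary point to straighten $\S^1$ into a line, blows up at the multiple point so that the boundary recedes to infinity, and derives the contradiction $2\pi\le\lim_\ro A_\mu(\ro)<2\pi$ from the monotonicity formula \eqref{monot} applied to the blow-up limit), but as written it has a genuine gap at its central step. You assert that the varifold limit $V$, after gluing the exterior plane $E$, ``is a complete $C^{1,1}$ immersed surface of genus $\cg$ with one flat planar end.'' Nothing in the compactness step justifies this: a varifold limit of immersed surfaces with uniformly bounded Willmore energy need not be a surface at all --- sheets can collapse onto each other with multiplicity, branch points can form, and the genus can drop. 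This is precisely the difficulty that the entire regularity analysis of Section 4 (Proposition \ref{simon}, following \cite{SiEX} and \cite{ScBP}) is built to handle, and even there the conclusion is only that the limit is a \emph{branched} immersion of possibly \emph{lower} genus, smooth away from finitely many bad points, and only under the extra hypothesis that the sequence is minimizing. Since your subsequent steps --- the identity $\cW(\Si')=\cW(V^{ext})+4\pi$ from \cite[Theorem 2.2]{BaKu} and the Li--Yau inequality for the closed surface $\Si'$ --- are invoked in their immersion form, they do not apply to the object you actually have. A secondary gap is the density transfer $\te_V(p_0^*)\ge2$: for $p_0^*\notin\S^1$ this follows from the standard upper semicontinuity of density via \eqref{monot}, but nothing prevents $p_{0,n}\to p_0^*\in\S^1$ even when each $p_{0,n}$ is an interior point, and in that case the boundary term $T_{p_{0,n},\ro}$ does not vanish as $p_{0,n}$ approaches $\S^1$ (it contributes roughly $-\pi/2$), so the limiting density of $V$ at $p_0^*$ can a priori drop below $2$; your parenthetical about ``half-weight boundary contributions'' is not a substitute for this computation. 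The paper's blow-up avoids this entirely because the rescaling sends the boundary line to infinity.

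That said, the skeleton of your idea is sound and repairable if you stay in the varifold category throughout: the cancellation $\si_V+\si_E=0$ does show that $V+E$ is an integral $2$-varifold with $L^2$ generalized mean curvature and \emph{no} generalized boundary (for this you should also verify, via the uniform bound on $\int|H_n|^2$, that no singular part of $\vec H_n\mu_{\Si_n}$ concentrates and gets absorbed into $\si_V$), and then the monotonicity formula applied to $V+E$ between the density-$\ge2$ point and infinity (where the density is $1$, coming from the single flat end $E$) gives $2\pi\le\pi+\tfrac14\cW(V)$ directly, with no need for $C^{1,1}$ regularity, for the genus, for \cite{BaKu}, or for the inversion. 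This is essentially the same inequality the paper extracts, only evaluated on the glued limit rather than on the blown-up sequence. To make this rigorous you would still need the local mass bounds required for varifold compactness (obtainable from the divergence-theorem argument in the proof of Proposition \ref{simon}) and an honest treatment of the density at boundary limit points, e.g.\ by running the monotonicity formula for $V+E$ centered at $p_0^*$, where the boundary terms have already been cancelled.
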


\begin{proof}
	By approximation taking a small perturbation of the surface we can prove the statement for $p_0\not\in \S^1$. Let us assume by contradiction that
	\begin{equation} \label{contr}
	\begin{split}
		\exists \Si_n:=g_n(\Si_\cg): \quad &\|co_n(p)-p\|_{L^2(\S^1)}\le \frac{1}{n}, \\
		& \exists p_n\in \Si_n\sm \S^1:\quad \sharp g_n^{-1}(p_n)\ge 2,\\
		& \cW(\Si_n)\le \om <4\pi.
	\end{split}
	\end{equation}
	Up to a small modification of the sequence which preserves \eqref{contr}, we can assume that for any $n$ there is $q\in \S^1$ such that $\sharp g_n^{-1}(q)=1$. Then we consider the sequence $\Si'_n:= I_{1,q}(\Si_n\sm\{q\})$, that, up to isometry, is an asymptotically flat manifold with one end such that
	\begin{equation*}
	\begin{split}
		&\Ga'_n:=\pa\Si'_n = \{ (X_n,y,0)|y\in\R \} \qquad X_n\in\R_{>0},\\
		& \te'_n(0)\ge 2,\\
		& 0\not\in \Ga'_n,\\
		&\cW(\Si'_n) = \cW(\Si_n) + G(\Si_n) -2\pi,
	\end{split}
	\end{equation*} 
	where $\te'_n(p)=\lim_{r\searrow0} \frac{|\Si'_n\cap B_r(0)|}{\pi r^2}$ is the multiplicity function and the last equality is given by Lemma \ref{sphericalr}.\\
	Consider now a blow up sequence $\Si''_n:=\frac{\Si_n}{r_n}$ for $r_n\searrow0$ such that
	\begin{equation*}
	\begin{split}
		&i)\, \int_{\Si'_n\cap B_{r_n}(0)} |\sff'_n|^2 \le \frac{1}{n},\\
		&ii)\, \Si'_n\cap B_{r_n}(0) =\bigcup_{i=1}^{\te_n'(0)} D_{n,i},\quad D_{n,i}\simeq \mbox{ Disk},\\
		&iii)\, \forall n\,\forall i=1,...,\te'_n(0) \quad\exists u_{n_i}:\Om_{n,i}\con L_{n,i} \to \R \,\, \mbox{s.t. }\\
		&\qquad L_{n,i} \mbox{ plane},\\
		&\qquad graph(u_{n,i})=D_{n,i},\\
		&\qquad |u_{n,i}|,|\nabla u_{n,i} | \le \frac{1}{n},\\
		& iv)\, \Ga''_n:=\pa \Si''_n =\{ (R_n,y,0)| y\in\R \}\qquad R_n:=\frac{X_n}{r_n}\to\infty.
	\end{split}
	\end{equation*}
	Then up to subsequence $\Si''_n$ converges in the sense of varifolds to the integer rectifiable varifold $\mu=\bv\bigg(\bigcup_{i=1}^M \Pi_i, \te \bigg)$, where each $\Pi_i$ is a plane passing through the origin and $M\ge2$ or $M=1,\te\ge2$.\\
	Now we exploit the monotonicity formula \eqref{monot} with $p_0=0$. Calculating $T_{0,\ro}$ on the sequence $\Si''_n$ gives
	\begin{equation*}
	\forall n\,\exists\lim_{\ro\to\infty} T_{0,\ro}(\Si''_n) = \frac{1}{2}\int_{\Ga''_n}  \frac{\lgl p, co''_n(p)\rgl}{|p|^2}  \,d\cH^1(p),
	\end{equation*}
	indeed
	\begin{equation*}
	\begin{split}
		\bigg|\frac{1}{2\ro^2}\int_{\Ga''_n\cap B_\ro(0)} \lgl p, co''_n(p)\rgl  \,d\cH^1(p)\bigg| &\le \bigg|\frac{1}{2\ro^2}\int_{\Ga''_n\cap B_\ro(0)} \lgl (R_n,y,0), ((co''_n(p))_1,0,(co''_n(p))_3)\rgl  \,d\cH^1(p)\bigg| \le\\&\le \frac{R_n}{2\ro^2}\cH^1(\Ga''_n\cap B_\ro(0))\le \frac{R_n}{\ro}\xrightarrow[\ro\to\infty]{}0.
	\end{split}
	\end{equation*}
	Also on $\Si''_n$ we have for any $0<\si<\ro$ that
	\begin{equation*}
	\begin{split}
		\int_{\Si''_n\cap B_\ro(0)} \frac{\lgl \vec{H}''_n,p\rgl}{\ro^2} &= 	\int_{\Si''_n\cap B_\si(0)} \frac{\lgl \vec{H}''_n,p\rgl}{\ro^2} + 	\int_{\Si''_n\cap B_\ro(0)\sm B_\si(0)} \frac{\lgl \vec{H}''_n,p\rgl}{\ro^2} \le\\&\le  \frac{1}{\ro} \int_{\Si''_n\cap B_\si(0)} |H_n''| + \frac{|\Si''_n\cap B_\ro(0)|^{1/2}}{\ro} \cW(\Si''_n\sm B_\si(0)),
	\end{split}
	\end{equation*}
	hence letting first $\ro\to\infty$ and then $\si\to\infty$, since $\lim_{\ro\to\infty}\frac{|\Si''_n\cap B_\ro(0)|^{1/2}}{\ro} = (\pi/2)^{1/2}$, we have
	\begin{equation*}
	\lim_{\ro\to\infty}\int_{\Si''_n\cap B_\ro(0)} \frac{\lgl \vec{H}''_n,p\rgl}{\ro^2}=0.
	\end{equation*}
	So by monotonicity of $A_{\Si''_n}$, that is $A$ of \eqref{monot} calculated on $\Si''_n$, we have
	\begin{equation*}
	\begin{split}
		\exists \lim_{\ro\to\infty} A_{\Si''_n}(\ro) &= \lim_{\ro\to\infty} \frac{|\Si''_n\cap B_\ro(0)|}{\ro^2} + \frac{\cW(\Si''_n)}{4} + \frac{1}{2}\int_{\Ga''_n}  \frac{\lgl p, co''_n(p)\rgl}{|p|^2}  \,d\cH^1(p) =\\&= \frac{\pi}{2} +  \frac{\cW(\Si''_n)}{4} + \frac{1}{2}\int_{\Ga''_n}  \frac{\lgl p, co''_n(p)\rgl}{|p|^2}  \,d\cH^1(p) =\\
		&=  \frac{\pi}{2} +  \frac{\cW(\Si''_n)}{4} + \frac{1}{2}\int_{\R} \frac{R_n (co''_n(p))_1}{R^2_n +y^2}\,dy\le \\
		&\le  \frac{\pi}{2} +  \frac{\cW(\Si''_n)}{4} + \frac{1}{2}\int_{\R} \frac{1}{1 +u^2}\,du =\pi + \frac{\cW(\Si''_n)}{4}.
	\end{split}
	\end{equation*}
	By the convergence $\Si''_n\to \mu$ in the sense of varifolds, the quantity $A(\ro)$ is lower semicontinuous for almost all $\ro>0$, i.e. $\liminf_n A_{\Si''_n}(\ro)\ge A_\mu(\ro)$. In fact the first and second summands in the definition of $A(\ro)$, that is the mass and the Willmore energy in the ball $B_\ro(0)$, are lower semicontinuous, while by continuity of the first variation under varifold convergence, the summand $R_{0,\ro}$ is continuous in the limit $n\to\infty$. Hence if $A_\mu$ is the $A$ of \eqref{monot} calculated on the varifold $\mu$, using also monotonicity we have
	\begin{equation} \label{ref42}
	\begin{split}
		\mbox{for a.a. }\ro>0:\qquad A_\mu(\ro)&\le \liminf_{n\to\infty} A_{\Si''_n}(\ro) \le   \liminf_{n\to\infty} \lim_{\ro\to\infty}  A_{\Si''_n}(\ro) \le \liminf_{n\to\infty} \pi +\frac{\cW(\Si''_n)}{4} = \\
		&= \pi + \liminf_{n\to\infty} \frac{\cW(\Si'_n)}{4} =\\
		&= \pi + \liminf_{n\to\infty} \frac{\cW(\Si_n)}{4} < 2\pi,
	\end{split}
	\end{equation}
	where we used that by the absurd hypothesis $G(\Si_n)\to 2\pi$.\\
	Passing to the limit in \eqref{ref42} we find
	\begin{equation*}
	2\pi \le \lim_{\ro\to\infty} \frac{\mu(B_\ro(0))}{\ro^2} = \lim_{\ro\to\infty}  A_\mu(\ro) < 2\pi,
	\end{equation*}
	which gives the desired contradiction.
\end{proof}

\begin{remark} \label{remcirc}
	Let us mention that if $\Ga$ is not a circumference, arguing as in Lemma \ref{norma}, one can perform similar estimates and calculate the integral $\int_{\Ga''_n} \frac{\lgl p, co''_n\rgl}{|p|^2}$ by the change of variable $p=I_{1,0}(q)$. One ends up with the following statement, whose proof is omitted here since we will not need this in the following (see \cite{PozzPhD}).\\
	Let $\Ga$ be an embedded planar closed smooth curve and let $\nu_\Ga$ be the unit outward conormal of the planar region enclosed by $\Ga$. Then for any $\om<4\pi$ there exists $\ep>0$ such that if $\Phi:\Si_\cg\to \R^3$ is an immersion with $\Phi(\pa\Si_\cg)=\Ga$, outward conormal field $co$ and
	\begin{equation*}
	\exists p_0\in\R^3:\quad \sharp \Phi^{-1}(p_0)\ge 2,
	\end{equation*}
	\begin{equation*}
	\| co - \nu_\Ga \|_{L^2(\Ga)}\le\ep,
	\end{equation*}
	then $\cW(\Phi)\ge \om$.
\end{remark}

\begin{cor} \label{lowbound}
	If $\Phi:\Si_\cg\to \R^3$ is an immersion with $\Phi(\pa\Si_\cg)=\S^1$ and there is $ p_0\in\R^3:\sharp \Phi^{-1}(p_0)\ge 2$, then $\cW(\Phi)\ge 4\pi$.
\end{cor}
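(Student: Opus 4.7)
The plan is to argue by contradiction, combining the two main tools already at hand: Lemma \ref{cop}, which produces, for every $\ep>0$, a conformal diffeomorphism $F$ defined near $\Phi(\Si_\cg)$ such that the conormal of $F\circ\Phi$ is $L^2(\S^1)$-close to the radial field $p\mapsto p$ and $G(F\circ\Phi)$ is close to $2\pi$; and Lemma \ref{norma}, which guarantees $\cW\ge\om$ for any preassigned $\om<4\pi$ as soon as the conormal is sufficiently close to $p$ and a point of multiplicity $\ge2$ is present. The glue between them is the conformal invariance of $\cW+G$ together with the uniform bound $G\le 2\pi$, which forces the conformal adjustment of Lemma \ref{cop} to raise $\cW$ by at most $\ep$.

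Concretely, assume by contradiction that $\cW(\Phi)=4\pi-2\de$ for some $\de>0$. First, apply Lemma \ref{norma} with the threshold $\om:=4\pi-\de<4\pi$ to extract a corresponding $\ep_0>0$. Then apply Lemma \ref{cop} with $\ep:=\min\{\ep_0,\de/2\}$ to obtain a conformal diffeomorphism $F:U\to F(U)$ with $\Phi(\Si_\cg)\subset U$ satisfying $\|co_{F\circ\Phi}-p\|_{L^2(\S^1)}<\ep$ and $2\pi-G(F\circ\Phi)<\ep$. A point worth checking explicitly from the formula \eqref{conf} is that $F$ restricts to a diffeomorphism of $\S^1$ onto itself: $I_{1,v}$ sends $\S^1\sm\{v\}$ to the line $r_{-v/2}$, $T_{v/2}$ maps it to the $y$-axis, which is then preserved setwise by $D_\al$ and by the rotation $\bR$ around the $y$-axis, and finally $T_{-v/2}$ and $I^{-1}_{1,v}$ invert the first two steps. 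Thus $F\circ\Phi$ is again an immersion of $\Si_\cg$ with boundary $\S^1$, and since $F$ is a diffeomorphism on $U$ it preserves the multiplicity function, so $F(p_0)$ is a point of $F\circ\Phi$ of multiplicity $\ge 2$.

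Applying Lemma \ref{norma} to $F\circ\Phi$ then yields $\cW(F\circ\Phi)\ge 4\pi-\de$. On the other hand, using the conformal invariance $(\cW+G)(F\circ\Phi)=(\cW+G)(\Phi)$, the universal bound $G(\Phi)\le 2\pi$ on $\S^1$-bounded surfaces, and the lower bound $G(F\circ\Phi)>2\pi-\ep$ from Lemma \ref{cop}, we obtain
\[
\cW(F\circ\Phi)=\cW(\Phi)+G(\Phi)-G(F\circ\Phi)<(4\pi-2\de)+\ep\le 4\pi-\tfrac{3}{2}\de,
\]
which contradicts the previous lower bound since $\de>0$. The main obstacle in this argument is precisely the verification that the $F$ produced by Lemma \ref{cop} stabilises $\S^1$ setwise: without this property, $F\circ\Phi$ would not have $\S^1$ as boundary and one could not feed it directly into Lemma \ref{norma}; a further rescaling and isometry would then be needed, but the explicit choice of rotation axis in the construction \eqref{conf} already takes care of this.
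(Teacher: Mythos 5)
Your proposal is correct and follows essentially the same route as the paper's proof: both combine Lemma \ref{cop} with Lemma \ref{norma} via the conformal invariance of $\cW+G$ and the bound $G\le 2\pi$, the paper phrasing it directly (for every $\om<4\pi$ the conformal adjustment can only decrease $\cW$, so $\cW(\Phi)\ge\cW(F\circ\Phi)\ge\om$) while you phrase it as a contradiction with explicit $\de$--$\ep$ bookkeeping. Your explicit check that the map $F$ of \eqref{conf} stabilises $\S^1$ is a worthwhile verification that the paper leaves implicit, but it does not change the argument.
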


\begin{proof}
	By approximation taking a small perturbation of the surface we can prove the statement for $p_0\not\in \S^1$. Call $\Si=\Phi(\Si_\cg)$. For any $\ep\in(0,1)$ by Lemma \ref{cop} we get the existence of a surface $\Phi'(\Si_\cg)=\Si'$ with boundary $\S^1$ such that
	\begin{equation*}
	\begin{split}
		&i)\, \Phi'=F\circ \Phi \qquad F\mbox{ conformal,}\\
		&ii)\, G(\Si')>G(\Si),\\
		&iii)\, \exists p'\in\Si'\sm \S^1:\qquad \sharp \Phi'^{-1}(p')\ge2,\\
		&iv)\, \| (co_{\Si'})(p)-p\|_{L^2(\S^1)}\le \ep.\\
	\end{split}
	\end{equation*}
	Then by $i),ii)$ and by Lemma \ref{norma} we have
	\begin{equation*}
	\cW(\Si)\ge\cW(\Si')\ge \om \qquad \forall\om<4\pi.
	\end{equation*}
\end{proof}

\begin{cor} \label{corequiv}
	The minimization problem $\min\{ \cW(\Si)|\Si\in\cA\}$ is equivalent to both the following minimization problems
	\begin{equation} \label{ref21}
	\min \{ (\cW+G)(\Si)| \Si\in \cB \},
	\end{equation}
	\begin{equation} \label{probr}
	\min \{\cW(\Si)| \Si\in\cA, \Si \mbox{ embedded.} \}.
	\end{equation}
	In fact, it holds that if $\Si\in\cA$ has a point with multiplicity $\ge2$, then $\cW(\Si)\ge4\pi$.
\end{cor}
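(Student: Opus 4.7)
The proof splits into three assertions: the bijection $\cA\leftrightarrow\cB$ via inversion, the lower bound $\cW\ge 4\pi$ for elements of $\cA$ with a multi-point (the ``in fact'' statement), and the consequent reduction to embedded competitors in $\cA$.

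\textbf{Bijection via inversion.} Lemma \ref{sphericalr} provides the essential ingredient. Given $\Si\in\cA$, translate so that $0\notin\Si$; then $\Si_\cB:=I(\Si)\cup\{0\}\in\cB$ and $(\cW+G)(\Si_\cB)=\cW(\Si)+2\pi$. Conversely, for $\Si'\in\cB$ with multiplicity-one singular point $p\in\S^1$, the set $I_{1,p}(\Si'\sm\{p\})$ lies in $\cA$ up to isometry and satisfies the mirror identity. These operations are mutually inverse modulo Euclidean similarities, under which $\cW$ and $\cW+G$ are invariant, so the two minimization problems are identified: minimizers correspond, and $\inf_\cB(\cW+G)=\inf_\cA\cW+2\pi$.

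\textbf{The $4\pi$ bound.} Let $\Si\in\cA$ have a multi-point $p_0$. Translate so that $0\notin\Si$ and $p_0\ne 0$; then $\Si_\cB:=I(\Si)\cup\{0\}\in\cB$ has a multi-point at $I(p_0)$ which lies in the interior, away from the sole $C^{1,1}$ singular point of $\Si_\cB$ on $\S^1$. It suffices to prove $(\cW+G)(\Si_\cB)\ge 6\pi$, since Lemma \ref{sphericalr} then yields $\cW(\Si)\ge 4\pi$. The strategy is to replay the proof of Corollary \ref{lowbound} inside $\cB$. First apply the conformal transformation of Lemma \ref{cop}, which is globally smooth and preserves $\S^1$ setwise, and therefore maps $\cB$ into $\cB$; choosing the parameters so that the resulting $F(\Si_\cB)$ has $G>2\pi-\eps$, conformal invariance of $\cW+G$ gives $(\cW+G)(F(\Si_\cB))=(\cW+G)(\Si_\cB)$. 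Then apply the monotonicity/blow-up argument of Lemma \ref{norma} to $F(\Si_\cB)$: since the blow-up is centered at the interior multi-point $F(I(p_0))$, it does not see the isolated $C^{1,1}$ singularity on $\S^1$, and the same computation yields $\cW(F(\Si_\cB))\ge \omega$ for every $\omega<4\pi$. Sending $\omega\uparrow 4\pi$ and $\eps\downarrow 0$ gives $(\cW+G)(\Si_\cB)\ge 6\pi$, hence $\cW(\Si)\ge 4\pi$.

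\textbf{Reduction to embedded.} From the first part and Theorem \ref{nonexS1}, together with the trivial inclusion $\cF\subset\cB$ obtained by designating any multiplicity-one boundary point as $p_0$, one gets
\[
\inf_\cA\cW=\inf_\cB(\cW+G)-2\pi\le \inf_\cF\cW=\be_\cg-4\pi<4\pi.
\]
Hence any minimizing sequence in $\cA$ eventually has $\cW<4\pi$, so by the $4\pi$ bound each such element is embedded, giving $\inf_\cA\cW=\inf\{\cW(\Si)\,|\,\Si\in\cA,\,\Si\text{ embedded}\}$ and forcing any minimizer of the problem on $\cA$ to be embedded.

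\textbf{Expected obstacle.} The technical delicacy is to justify that Lemmas \ref{cop} and \ref{norma}, originally stated for smooth immersions in $\cF$, extend to the class $\cB$ of $C^{1,1}$ immersions with one singular boundary point. I expect this to be essentially routine, since the conformal map of Lemma \ref{cop} is globally smooth and preserves $\S^1$, while Lemma \ref{norma}'s blow-up is centered at an interior multi-point far from the boundary singularity. A safe alternative, avoiding any re-proof, is to approximate $\Si_\cB$ by surfaces $\Si_n\in\cF$ through a bending perturbation localized near the singular boundary point that preserves both $\S^1$ and the interior multi-point, apply Corollary \ref{lowbound} together with Lemma \ref{cop} to each $\Si_n$, and then pass to the limit using the continuity of $\cW+G$ along the approximation.
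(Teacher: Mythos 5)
Your argument is correct and follows essentially the same route as the paper: Lemma \ref{sphericalr} gives the correspondence between $\cA$ and $\cB$, and the $4\pi$ bound is obtained by inverting to $\cB$, applying the conformal map of Lemma \ref{cop} to push $G$ up to $2\pi-\ep$, and invoking the multi-point lower bound (your Lemma \ref{cop} plus Lemma \ref{norma} combination is exactly the content of Corollary \ref{lowbound}, which the paper cites directly). Your explicit handling of the $C^{1,1}$ regularity of elements of $\cB$ when applying these lemmas is a point the paper's proof passes over silently, so your ``expected obstacle'' paragraph is a welcome refinement rather than a deviation.
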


\begin{proof}
	Equivalence with problem \eqref{ref21} follows from Lemma \ref{sphericalr}. Now suppose $\Si\in\cA$ has a point with multiplicity $\ge2$. By Lemma \ref{sphericalr} the surface $\Si':=I(\Si)\cup\{0\} \in \cB$ verifies
	\begin{equation*}
	\cW(\Si)=\cW(\Si')+ G(\Si')-2\pi.
	\end{equation*}
	If $G(\Si')=2\pi$, then by Corollary \ref{lowbound} follows that $\cW(\Si)\ge4\pi>\be_\cg-4\pi$. If $G(\Si')<2\pi$, for any $\ep>0$ we can apply Lemma \ref{cop} in order to get a new surface $F(\Si')$ such that
	\begin{equation*}
	\cW(\Si)=\cW(F(\Si'))+ G(F(\Si'))-2\pi>\cW(F(\Si'))-\ep\ge4\pi-\ep,
	\end{equation*}
	where the second inequality follows by Corollary \ref{lowbound}. Letting $\ep\to0$ the proof is completed.
\end{proof}

\begin{cor} \label{lb}
	There exists $\ep_0>0$ such that if $\Ga$ is a planar curve such that there is a diffeomorphism $F:\R^3\to\R^3$ with $F(\Ga)=\S^1$ and
	$$\|F-id|_{\R^3}\|_{C^2(\R^3)}\le \ep_0,$$
	then
	\begin{equation*}
	\inf\{ \cW(\Phi): \Phi(\pa\Si_\cg)=\Ga \}=\inf\{ \cW(\Phi): \Phi(\pa\Si_\cg)=\Ga,\, \Phi \mbox{ embedding} \}.
	\end{equation*}
	Indeed if $\Phi:\Si_\cg\to\R^3$ is an immersion with $\Phi(\pa\Si_\cg)=\Ga$ and $$\exists p_0\in\R^3:\sharp \Phi^{-1}(p_0)\ge 2,$$
	then $\cW(\Phi)\ge 4\pi - \eta(\ep_0)$ for some $\eta(\ep_0)\to0 $ as $\ep_0\to0$.
\end{cor}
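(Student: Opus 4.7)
The plan is to argue by contradiction. Suppose for some $\eta_0>0$ there exist sequences $F_n:\R^3\to\R^3$ with $\|F_n-id\|_{C^2(\R^3)}\to 0$, planar curves $\Ga_n:=F_n^{-1}(\S^1)$, and immersions $\Phi_n:\Si_\cg\to\R^3$ with $\Phi_n(\pa\Si_\cg)=\Ga_n$ and some $p_n$ satisfying $\sharp\Phi_n^{-1}(p_n)\ge 2$ and $\cW(\Phi_n)\le 4\pi-\eta_0$. Set $\tilde\Phi_n:=F_n\circ\Phi_n$: this is a smooth immersion of $\Si_\cg$ with $\tilde\Phi_n(\pa\Si_\cg)=\S^1$ and self-intersection at $F_n(p_n)$, so Corollary \ref{lowbound} yields $\cW(\tilde\Phi_n)\ge 4\pi$. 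The strategy is to show $\cW(\tilde\Phi_n)-\cW(\Phi_n)\to 0$, contradicting $\eta_0>0$.

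Writing $F_n=id+f_n$ with $\|f_n\|_{C^2}\to 0$, the chain rule identities $\pa_i(F_n\circ\Phi_n)=\pa_i\Phi_n+df_n(\pa_i\Phi_n)$ and $\pa_i\pa_j(F_n\circ\Phi_n)=\pa_i\pa_j\Phi_n+df_n(\pa_i\pa_j\Phi_n)+d^2f_n(\pa_i\Phi_n,\pa_j\Phi_n)$ give, by direct pointwise bookkeeping, $\tilde g_{ij}=g_{ij}(1+O(\|f_n\|_{C^2}))$, $\tilde N=N+O(\|f_n\|_{C^2})$ and $\tilde\sff_{ij}=\sff_{ij}+O(\|f_n\|_{C^2})(|\sff_{\Phi_n}|+1)$. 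Integrating $|\tilde H|^2\,d\mu_{\tilde g}$ and applying Cauchy--Schwarz yields
\begin{equation*}
\cW(\tilde\Phi_n)\le \cW(\Phi_n)+C\|f_n\|_{C^2}\bigl(\cW(\Phi_n)+D(\Phi_n)+\mu_{\Phi_n}(\R^3)+1\bigr).
\end{equation*}
Hence it suffices to uniformly bound $D(\Phi_n)$ and the total area $\mu_{\Phi_n}(\R^3)$. The former follows from $|\sff|^2=4|H|^2-2K$ and Gauss--Bonnet on $\Si_\cg$: $|G(\Phi_n)|\le \int_{\Ga_n}|\vec k_{\Ga_n}|\,d\cH^1$ is uniformly controlled since $\Ga_n\to \S^1$ in $C^2$.

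The area bound is the main obstacle, and I would obtain it via two applications of the monotonicity formula \eqref{monot}. First, applied at any $p\in\Phi_n(\Si_\cg)$ and passed to $\ro\to\infty$ (using that $\Phi_n(\Si_\cg)$ is bounded as a compact image), monotonicity gives $\pi\te(p)\le \cW(\Phi_n)/4+|T_{p,\infty}|$ with $|T_{p,\infty}|\le \cH^1(\Ga_n)/(2\,\dist(p,\Ga_n))$; since $\te(p)\ge 1$ and $\cW(\Phi_n)\le 4\pi-\eta_0$, this forces $\dist(p,\Ga_n)\le 2\cH^1(\Ga_n)/\eta_0$, so $\Phi_n(\Si_\cg)$ lies in a uniformly bounded tubular neighbourhood of the uniformly bounded curves $\Ga_n$. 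Second, fixing $p_0$ at positive distance from $\Ga_n$ uniformly in $n$, monotonicity gives $\mu_{\Phi_n}(B_R(p_0))/R^2\le A(R)\le A(\infty)=\cW(\Phi_n)/4+T_{p_0,\infty}$, so taking $R$ equal to the (uniformly bounded) diameter of the tubular neighbourhood yields the required uniform area bound. The comparison estimate then delivers $\cW(\tilde\Phi_n)\le \cW(\Phi_n)+o(1)\le 4\pi-\eta_0+o(1)$, contradicting $\cW(\tilde\Phi_n)\ge 4\pi$. The first assertion of the corollary follows by choosing $\ep_0$ so small that $4\pi-\eta(\ep_0)>\be_\cg-4\pi$: non-embedded competitors then strictly exceed the infimum of $\cW$ over embedded competitors with boundary $\Ga$.
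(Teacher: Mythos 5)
Your argument follows the same route as the paper's proof: transport $\Phi$ to a competitor with boundary $\S^1$ via $F$, apply Corollary \ref{lowbound} to the composed immersion, and control the change in Willmore energy by $\|F-id\|_{C^2}$. The paper merely asserts the uniform estimate $\cW(F\circ\Phi)\le\cW(\Phi)+\eta(\ep_0)$, whereas you correctly identify and supply the ingredients it secretly needs --- uniform bounds on $D(\Phi)$ via Gauss--Bonnet and on the area via the monotonicity formula for competitors with $\cW<4\pi$ --- so your write-up is a valid (indeed more complete) version of the same proof; the only point to polish is that the pointwise bound $\tilde\sff_{ij}=\sff_{ij}+O(\|f_n\|_{C^2})(|\sff|+1)$ should be derived in coordinates in which $\pa_i\pa_j\Phi$ is normal at the point considered (e.g.\ a graph over the tangent plane), since in a general parametrization the tangential part of $\pa_i\pa_j\Phi$ is not controlled by $|\sff|$.
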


\begin{proof}
	If $\Phi(\pa\Si_\cg)=\Ga$, for $\ep_0$ sufficiently small there is $\eta(\ep_0)$ such that $\eta(\ep_0)\to0 $ as $\ep_0\to0$ for which
	\begin{equation*}
	\cW(\Phi)+\eta(\ep_0)\ge \cW(F\circ \Phi).
	\end{equation*}
	Then the thesis follows by Corollary \ref{lowbound} and the fact that $\inf\{ \cW(\Phi): \Phi(\pa\Si_\cg)=\Ga \}\le \be_\cg-4\pi<4\pi$.
\end{proof}

\noindent Putting together Theorem \ref{nonexS1} with Corollary \ref{lowbound} we get another non-existence result:

\begin{cor} \label{cors1equiv}
	For any genus $\cg\ge1$, the minimization problem
	\begin{equation*}
	\min\{ \cW(\Phi)|\Phi:\Si_\cg\to \R^3 \mbox{ smooth embedding, } \Phi(\pa\Si_\cg)=\S^1 \}
	\end{equation*}
	has no minimizers and the infimum equals $\be_\cg-4\pi$.
\end{cor}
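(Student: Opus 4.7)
The plan is to deduce this corollary by directly combining the two main results already established in this section with Theorem \ref{nonexS1}. The key quantitative input is the universal bound $\be_\cg - 4\pi = e_\cg < 4\pi$ recalled in the Introduction, which creates a strict gap between the expected value of the infimum and the lower bound for non-embedded competitors provided by Corollary \ref{lowbound}.

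First, I would identify the infimum. Denote
\begin{equation*}
m_{\mathrm{emb}} := \inf\{\cW(\Phi) : \Phi:\Si_\cg\to\R^3 \mbox{ smooth embedding},\, \Phi(\pa\Si_\cg)=\S^1\},
\end{equation*}
and let $m_{\mathrm{imm}}$ be the corresponding infimum over smooth immersions. Since every embedding is an immersion, $m_{\mathrm{emb}}\ge m_{\mathrm{imm}}$, and by Theorem \ref{nonexS1} this gives $m_{\mathrm{emb}} \ge \be_\cg - 4\pi$. For the reverse inequality, pick any $\ep$ with $0 < \ep < 4\pi - (\be_\cg - 4\pi)$, which is possible because $\be_\cg - 4\pi < 4\pi$. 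By Theorem \ref{nonexS1} there exists an immersion $\Phi_\ep\in\cF$ with $\cW(\Phi_\ep) < \be_\cg - 4\pi + \ep < 4\pi$. Then Corollary \ref{lowbound} forces $\Phi_\ep$ to be an embedding, for otherwise its energy would be at least $4\pi$. Hence $m_{\mathrm{emb}} \le \be_\cg - 4\pi + \ep$, and letting $\ep\to 0$ yields $m_{\mathrm{emb}} = \be_\cg - 4\pi$.

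Second, I would rule out existence by a one-line contradiction: if some embedding $\Phi$ achieved $\cW(\Phi) = m_{\mathrm{emb}} = \be_\cg - 4\pi$, then $\Phi$ would in particular belong to the class $\cF$ of immersions and realize the value $m_{\mathrm{imm}} = \be_\cg - 4\pi$, contradicting the non-existence statement in Theorem \ref{nonexS1}.

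There is no substantive obstacle here, since all the real work is already done: Theorem \ref{nonexS1} handles the immersed problem, and Corollary \ref{lowbound} provides the rigidity forcing near-minimizers to be embedded. The only thing to verify is that the strict inequality $\be_\cg - 4\pi < 4\pi$ holds uniformly in $\cg$, which is exactly the content of the universal bound $e_\cg < 4\pi$ recorded in the Introduction, so the two ingredients fit together cleanly.
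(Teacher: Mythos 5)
Your proposal is correct and is exactly the argument the paper intends: the paper derives this corollary in one line by ``putting together Theorem \ref{nonexS1} with Corollary \ref{lowbound}'', and your write-up simply makes explicit the two steps (near-minimizing immersions have energy below $4\pi$ and are therefore embedded by Corollary \ref{lowbound}, so the embedded and immersed infima coincide; a minimizing embedding would contradict the non-existence in Theorem \ref{nonexS1}). The quantitative gap $e_\cg<4\pi$ you invoke is indeed the fact recorded in the Introduction that makes the two ingredients fit together.
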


\textcolor{white}{text}
\section{Varifold approach to existence results}

\noindent Now we want to study the generalized problem on classes $\cC(\Si_\cg)$. Let us explain the organization of the section. In Subsection \ref{sub1} we apply Simon's ambient approach to derive information on limits of minimizing sequences. In Subsection \ref{sub2} we come back to the study of the classical problem in the case of asymptotically flat surfaces having a straight line as boundary. Then in the third subsection we conclude the study of the generalized problem on the class $\cC(\Si_1)$; to this aim we will use the results of Subsection \ref{sub2}.\\
Note that in the following we will also use definitions and facts recalled in Appendix A and Appendix B.\\

\textcolor{white}{text}

\subsection{Generalized problem and Simon's direct method}\label{sub1}

\noindent Let $ \Ga $ be a closed planar smooth embedded curve in $\R^2=\{z=0\}\con\R^3$ and let $D$ be the bounded planar region enclosed by $\Ga$. Fix an integer $\cg\ge 1$. Recall that we say that a continuous proper map $\Phi:S\to\R^3$ is a \emph{branched immersion} if $\Phi|_{S\sm\{y_1,...,y_P\}}$ is a smooth immersion for some $y_1,...,y_P\in S$. Denote by $\Imm(\Phi)$ the image varifold induced by a possibly branched immersion $\Phi$.\\
In the Introduction we have defined the classes $\cC(\Si_\cg)$ and $\cC_{imm}(\Si_\cg)$.\\

\noindent We now study the minimization problem
\begin{equation} \label{problemmin}
\min \big\{\cW(V)| V\in \cC(\Si_\cg) \big\}.
\end{equation}

\begin{remark} \label{remset}
	The use of the set $\cC(\Si_\cg)$ is essentially technical and fundamental for the proof of the following Proposition \ref{simon}. It would be reasonable to define problem \eqref{problemmin} on every immersed surface and then restrict it to smaller classes by means of some result like Corollary \ref{lowbound}. However, analogous results are absent for arbitrary curves, although much likely to be true at least for convex ones. This is the reason why we say that the choice of the class $\cC$ is natural from a variational perspective.
\end{remark}

\noindent From now on, assuming that $\inf_{\cC(\Si_\cg)}\cW= \inf_{\cC_{imm}(\Si_\cg)} \cW$, we will denote by $\Phi_n$ a sequence of embeddings which is minimizing for the problem \eqref{problemmin} and by $\Si_n$ the sequence of integer rectifiable varifolds in $\R^3$ induced by each $\Phi_n$.\\
\noindent A point $\xi\in\R^3$ is called a \emph{bad point for the sequence $\Si_n$ with respect to a parameter $\ep>0$} if
\begin{equation*}
\lim_{r\searrow0}\liminf_{n\to+\infty} D(\Si_n\cap B_r(\xi))\ge \ep^2.
\end{equation*}
\noindent If a point is not bad, we say that it is \emph{good}. In the rest of the subsection we will deeply make use of the techniques developed in \cite{SiEX} and in \cite{ScBP}, so in the following proofs we mainly remark the differences and the adaptations of such techniques to our case.

\begin{prop}\label{simon}
	Assume that $\inf_{\cC(\Si_\cg)}\cW= \inf_{\cC_{imm}(\Si_\cg)} \cW$. Up to subsequence, $\Si_n$ converges in the sense of varifolds to an integer rectifiable varifold $\Si$ with boundary.
	The set of bas points $\{\xi_j \}$ of $\Si_n$ with respect to $\ep$ is finite, and it holds that
	\begin{equation} \label{diffeom}
	\exists\si_n\searrow 0 :\quad\Si\sm \cup_j B_{\si_n}(\xi_j) = \overline{\Phi}_n(S), \quad S\simeq \Si_n \sm \cup_j B_{\si_n}(\xi_j),
	\end{equation}
	for smooth immersions $\overline{\Phi}_n:S\to\R^3$ of a two dimensional manifold $S$ with boundary.\\
	The varifold $\Si$ is induced by a possibly branched immersion $\Phi:\Si_{\cg'}\to\R^3$. Moreover, if $\inf_{\cC(\Si_{\cg'})}\cW= \inf_{\cC_{imm}(\Si_{\cg'})} \cW$, $\Si$ is a minimizer of the problem \eqref{problemmin} in its own class of genus $\cg'$ and $\cg'\le\cg$.
\end{prop}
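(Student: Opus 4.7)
The plan is to adapt the direct method of \cite{SiEX}, with the boundary refinements of \cite{ScBP}, to the present setting of a fixed planar boundary $\Ga$ with free conormal. First, by definition of $\cC(\Si_\cg)$, a minimizing sequence $\Si_n$ satisfies the uniform bounds $D(\Si_n)\le C<+\infty$ and $\cW(\Si_n)\le C$, and its generalized boundary is $\nu_n\,\cH^1\res\Ga$. The compactness theorem for curvature varifolds with boundary (Appendix A) then produces a subsequence converging in the sense of varifolds to an integer rectifiable $\Si$ with boundary $\si_\Si=\nu\,\cH^1\res\Ga$, whose generalized second fundamental form lies in $L^2$ by lower semicontinuity.

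Next, the bound $D(\Si_n)\le C$ combined with a standard Vitali covering argument forces the set of bad points with respect to $\ep$ to have cardinality at most $C/\ep^2$, hence to be finite. At each good point $\xi$ there exist $r_\xi>0$ and a subsequence with $D(\Si_n\cap B_{r_\xi}(\xi))<\ep^2$; for $\ep$ sufficiently small, Simon's graphical decomposition (Appendix B) then represents $\Si_n$ in $B_{r_\xi/2}(\xi)$ as smooth graphs of small-norm functions over planes, with quantitative control on the second fundamental form. A finite open cover of $\R^3\sm\bigcup_j B_{\si_n}(\xi_j)$ by such neighbourhoods, together with the boundary version of the graphical lemma at points of $\Ga$ as in \cite{ScBP} (where the free conormal condition replaces the clamped one, but the technical content is identical thanks to the smoothness of $\Ga$), yields the diffeomorphism \eqref{diffeom} for a suitable $\si_n\searrow 0$.

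For the parametrization, $\Si$ is smooth away from $\{\xi_j\}$ by $\ep$-regularity. Near each $\xi_j$, the power-decay estimates for the second fundamental form in \cite{SiEX}, \cite{ScBP} show that $\Si$ decomposes into a finite union of $C^{1,\al}$ disks meeting at $\xi_j$ as possible branch points. Gluing these local disks to the smooth piece obtained from the limit of \eqref{diffeom} as $n\to\infty$ produces a compact abstract surface $\Si_{\cg'}$ of some genus $\cg'\ge 0$ and a branched immersion $\Phi:\Si_{\cg'}\to\R^3$ with $\Phi(\pa\Si_{\cg'})=\Ga$ inducing $\Si$. Topologically $\Si_{\cg'}$ is obtained from $\Si_\cg$ by collapsing the neck-like regions at each bad point; such operations can only decrease the genus, which gives $\cg'\le\cg$.

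Finally, lower semicontinuity of $\cW$ under varifold convergence with the uniform $D$-bound yields $\cW(\Si)\le\liminf_n\cW(\Si_n)=\inf_{\cC(\Si_\cg)}\cW$. Under the hypothesis $\inf_{\cC(\Si_{\cg'})}\cW=\inf_{\cC_{imm}(\Si_{\cg'})}\cW$, a diagonal construction starting from $\Si_n$ places $\Si$ in $\cC(\Si_{\cg'})$, so $\Si$ attains the infimum in its own class. The main technical obstacle is the third step: controlling the finite multiplicity of the local sheets at each bad point, ensuring they glue consistently into a single branched immersion from an abstract manifold, and correctly accounting for the genus drop through the pinching. This requires careful use of the monotonicity formula \eqref{monot} (both in the interior and up to $\Ga$) and the decay estimates for the second fundamental form near singular points in the spirit of \cite{SiEX}.
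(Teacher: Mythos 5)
Your first step contains a genuine gap. The compactness theorem for curvature varifolds with boundary (Theorem \ref{thm1} in Appendix A) requires, besides the $L^p$ bound on the curvature functions and the bound on $|\pa V_l|$, a uniform \emph{local mass bound} $\mu_{V_l}(W)\le C(W)$. This is not part of the definition of $\cC(\Si_\cg)$ and does not follow from $D(\Si_n)\le C$ and $\cW(\Si_n)\le C$ alone: a priori the areas $|\Si_n|$ could blow up or the surfaces could escape to infinity. The paper devotes the first part of its proof precisely to this point, and even flags it in a remark as the new ingredient: one first bounds $\dist(p_0,\Ga)$ for every $p_0\in\Si_n\sm\Ga$ by passing to the limits $\si\to0$, $\ro\to\infty$ in the monotonicity formula \eqref{monot}, which yields $\tfrac{8\pi-\be_\cg-\ep_n}{2}\le \cH^1(\Ga)/\dist(p_0,\Ga)$; this gives a uniform diameter bound \emph{only because the sequence is minimizing}, so that $\cW(\Si_n)\le\be_\cg-4\pi+\ep_n<4\pi$ makes the left-hand side positive. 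A mere uniform energy bound would not do. The area bound then follows from a tangential-divergence computation on the rescaled surfaces. Without this step your application of the compactness theorem is unjustified.

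Two further points are glossed over where the paper does real work. First, to invoke Langer's construction \cite{La} (as in \cite{ScBP}) and obtain a single branched immersion, one must rule out that two distinct sheets of $\Si_n$ converge to the same good point with \emph{different} tangent planes; the paper does this by showing that transversal touching in the limit would contradict the embeddedness of $\Si_n$, so distinct sheets can only meet tangentially. You defer exactly this ("ensuring they glue consistently") to an unproved "main technical obstacle", so the existence of the branched immersion $\Phi:\Si_{\cg'}\to\R^3$ is not established in your argument. Second, the hypothesis $\inf_{\cC}\cW=\inf_{\cC_{imm}}\cW$ enters the regularity step because the biharmonic comparison surfaces need not be embedded; your proposal never explains where this hypothesis is used. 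The remaining structure of your argument (finiteness of bad points via the $D$-bound, graphical decomposition at good points, lower semicontinuity, genus drop at the necks) does match the paper's route.
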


\begin{remark}
	We remark that the compactness result of Proposition \ref{simon}, i.e. the extraction of a subsequence converging in the sense of varifolds, is new in the setting of surfaces with boundary $\R^3$ having bounded Willmore energy and no prescription on the conormal. In fact, this property follows from the minimizing assumption on the sequence and not only from the bound in energy.
\end{remark}

\begin{proof}[Proof of Proposition \ref{simon}]
	By compactness properties of varifolds (Appendix A), in order to establish the varifold convergence we just have to prove a uniform bound on the sequence of the areas $|\Si_n|$. Indeed by Gauss-Bonnet
	$$\cW(\Si_n)=\frac{1}{4}D(\Si_n)+\frac{1}{2}\bigg( 2\pi\chi(\Si_n)-\int_\Ga (k_g)_n \bigg)$$
	we get a uniform bound on $D(\Si_n)$; while for any $n$ the varifold boundary of $\Si_n$ has total variation bounded by the measure $\cH^1\res\Ga$. Also, the uniform bound on $D(\Si_n)$ implies that there are at most finitely many bad points with respect to any fixed parameter $\ep>0$. So for an estimate on the areas, let us first show a bound on the diameters of each $\Si_n$. Since $\Si_n$ is embedded, passing to the limit $\si\to0$ and $\ro\to\infty$ in the monotonicity formula with boundary \eqref{monot} follows that
	\begin{equation*}
	\pi+\int_{\Si_n} \bigg| \frac{H_n}{2}+\frac{(p-p_0)^\perp}{|p-p_0|^2} \bigg|^2=\frac{1}{2}\int_\Ga \bigg\lgl \frac{p-p_0}{|p-p_0|^2},co_n \bigg\rgl+\frac{\cW(\Si_n)}{4},
	\end{equation*}
	for any $p_0\in\Si_n\sm \Ga$. Since $ \int_\Ga \lgl \frac{p-p_0}{|p-p_0|^2},co_n \rgl \le \frac{\cH^1(\Ga)}{dist(p_0,\Ga)} $ and $\cW(\Si_n)\le \be_\cg-4\pi+\ep_n$ for some $\ep_n\to0$, we get
	$$ \frac{8\pi-\be_\cg -\ep_n}{2}\le \frac{\cH^1(\Ga)}{dist(p_0,\Ga)}. $$
	Now for $n$ sufficiently big the quantity $8\pi-\be_\cg-\ep_n$ is strictly positive, then
	$$ dist(p_0,\Ga)\le \frac{2\cH^1(\Ga)}{8\pi-\be_\cg-\ep_n}\le C(\cg,\Ga). $$
	This is true for any $p_0\in \Si_n\sm \Ga$, and since $\Ga$ is fixed we get a bound on the diameters $diam(\Si_n)\le C$ which is uniform in $n$.\\
	So let us say that for any $n$ we have $\Si_n\con\con B_R(0)$. We can estimate the area of $\Si_n$ in the same fashion used in the proof of Proposition \ref{ref27}; so consider $\Si'_n:=\frac{1}{R}\Si_n \con\con B_1(0)$ and we have
	\begin{equation*}
	\begin{split}
		2|\Si'_n| &= \int_{\Si'_n} div_{\Si'_n} x =-2\int_{\Si'_n} \lgl \vec{H}'_n,x\rgl +\int_{\pa\Si'_n} \lgl x, co'_n(x)\rgl \,d\cH^1=\\&= |\Si'_n| -\int_{\Si'_n} (1-|x^\perp|^2)-\int_{\Si'_n} |\vec{H}'_n+x^\perp|^2+\cW(\Si'_n)+\int_{\pa \Si'_n} \lgl x, co'_n(x)\rgl \,d\cH^1 \le \\
		&\le |\Si'_n|+\cW(\Si_n)+\cH^1(\pa \Si'_n)\le\\
		&\le |\Si'_n|+\cW(\Si_n)+\frac{\cH^1(\Ga)}{R}.
	\end{split}
	\end{equation*}
	Hence $|\Si_n|=R^2|\Si'_n|$ is uniformly bounded in $n$, and therefore we get varifold convergence up to subsequence to some limit $\Si$ with boundary $\Ga$.\\
	Now we are going to exploit the regularity theory developed in \cite{SiEX}, which is based on Lemma 2.1 of \cite{SiEX}. So let $\ep>0$ and denote by $\xi_1,...,\xi_P$ the related bad points. We can apply the arguments in \cite{SiEX} as did in \cite{ScBP} without modifications to get regularity in any good point $\xi\in \supp(\Si)\sm\Ga$, thus getting $\supp(\Si)\in C^\infty$ at such points, in the sense that $\supp(\Si)$ is locally union of graphs of $C^\infty$ functions near those points. Here we are using the hypothesis that $\inf_{\cC(\Si_\cg)}\cW= \inf_{\cC_{imm}(\Si_\cg)} \cW$, as the biharmonic comparison arguments of \cite{SiEX} might not preserve the embeddedness of $\Si_n$. The same method can applied also in the case of good points $\xi\in\Ga$, except that now the sets $d_{i,k}$ in Lemma 2.1 of \cite{SiEX} may touch $\Ga$ (see Appendix B). However the biharmonic comparison method of Theorem 3.1 of \cite{SiEX} can be applied in the analogous way as stated in Appendix B by comparison with biharmonic functions with graphs passing through $\Ga$; hence the usual estimates are achievable, giving regularity $C^{1,\al}$ up to the boundary good points. Finally, choosing variations $\phi N$ with $\phi\in C^\infty_0(\Si),\nabla\phi|_{\Ga}=0$, the same arguments of Proposition 3.1 of \cite{ScBP} can be applied, hence finding that $\supp(\Si)\sm\{\xi_1,...,\xi_P\}$ is $C^\infty$. Let us recall that the regularity arguments are based on the fact that the biharmonic comparison implies the existence of a number $\al\in(0,1)$ such that for any good point $\xi\in\supp\Si$ one has the decay $\int_{\Si\cap B_\ro(\xi)} |\sff_\si|^2\le C\ro^\al$ for any $\ro$ sufficiently small.\\
	
	\noindent By classical arguments one also obtains that $\Si_n\to supp(\Si)$ in Hausdorff distance $d_\cH$ (see also \cite{NoPo20} for a detailed proof). Let us now consider $z\in\Si$ a good point and $x_n,y_n$ points in $\Si_n$ such that $x_n\neq y_n$ for any $n$ and $x_n\to z, y_n\to z$. Fix $\rho>0$ and apply the graphical decomposition method in $B_\rho(z)$. Up to adding two more graphs to the decomposition we can assume that for any $n$ there are smooth functions
	\begin{equation*}
	\begin{split}
		&u_n:\Om_n^u\con L^u_n \to (L^u_n)^\perp, \qquad v_n:\Om_n^v\con L^v_n \to (L^v_n)^\perp, \\
		&x_n\in int(\Om^u_n)\cup\Ga, \qquad\qquad\qquad\,\,\, y_n\in int(\Om^y_n)\cup\Ga,\\
		&graph(u_n)\con \Si_n\cap B_\ro(z), \quad\, graph(v_n)\con \Si_n\cap B_\ro(z).
	\end{split}
	\end{equation*}
	As in \cite{SiEX} there are vectors $\eta_n^u,\eta_n^v\in\R^3$ such that $\eta_n^u\to\eta^u,\eta_n^v\to\eta^v$ and planes $L^u,L^v$ such that $L^u_n\to L^u,L^v_n\to L^v$ and a posteriori smooth functions
	\begin{equation*}
	\begin{split}
		&u:\Om^u\con L^u \to (L^u)^\perp, \qquad v:\Om^v\con L^v \to (L^v)^\perp,
	\end{split}
	\end{equation*}
	so that $z\in L^u\cap L^v\cap  \overline{dom(u)}\cap \overline{dom(v)}$ and
	\begin{equation*}
	\begin{split}
		&graph(u_n)\xrightarrow[]{d_\cH} graph(u), \qquad graph(v_n)\xrightarrow[]{d_\cH} graph(v),
	\end{split}
	\end{equation*}
	\begin{equation*}
	\begin{split}
		\int_{dom(u_n)} |\nabla u_n-\eta^u_n|^2 \le C\sqrt{\al_n}\ro^2, \qquad \int_{dom(v_n)} |\nabla v_n-\eta^v_n|^2 \le C\sqrt{\al_n}\ro^2, \qquad \liminf_n \sqrt{\al_n}\le C\ro^\al,
	\end{split}
	\end{equation*}
	\begin{equation*}
	\int_{dom(u)} |\nabla u-\eta^u|^2 \le C\ro^{2+\al}, \qquad \int_{dom(v)} |\nabla v-\eta^v|^2 \le C\ro^{2+\al},
	\end{equation*}
	with $\al\in(0,1)$. Hence $\nabla u(z)=\eta^u,\nabla v(z)=\eta^v$. Up to reparametrization we can assume $\eta^u=\eta^v=0$ and $L^u_n=L^u,L^v_n=L^v$ for $n$ sufficiently large. If by contradiction we suppose $L^u\neq L^v$, by the convergence of graphs in Hausdorff distance and by smoothness this would imply that $\Si_n$ is not embedded for $n$ big enough, that contradicts our assumptions. This fact is essentially telling us that if two separate sheets of $\Si_n$ contain sequences $x_n,y_n$ converging to tha same good point, then such sheets touch tangentially in the limit. Hence the construction in \cite{La} as used in \cite{ScBP} implies that
	\begin{equation*}
	\forall\ro>0: \quad \exists S_\ro, \overline{\Phi}_\ro:S_\rho\to\R^3\, \mbox{ s.t. }\, \Si\sm \cup_j B_\ro(\xi_j) = \Imm(\overline{\Phi}_\ro),
	\end{equation*}
	where $\{\xi_j\}$ is the set of bad points, $S_\ro$ is a suitable manifold with boundary and $\Imm(\overline{\Phi}_\ro)$ is the varifold induced by $\overline{\Phi}_\ro$.\\
	
	
	\noindent By the above argument and by the fact that as in \cite{SiEX} the monotonicity formula with boundary implies that
	\begin{equation} \label{annular1}
	\forall j\, \forall\ep\,\exists \si_0: \quad\frac{|(x-\xi_j)^\perp|}{|x-\xi_j|}\le\ep \quad \forall x\in\Si\cap B_\si(\xi_j)\sm B_{\si/2}(\xi_j), \,\forall\si\in\cS_j\con(0,\si_0):\,\cL^1((0,\si_0)\sm\cS_j)\le C\ep \si_0^2,
	\end{equation}
	we see that we can construct as in \cite{SiEX} for such $\si$ the annular sets
	\begin{equation} \label{annular2}
	\begin{split}
		&A^\si_j=\{x\in L_j\,\,|\,\, \si/4<|x-\xi_j|<3\si/4 \},\\
		&\cA^\si_j=\{x+z\,\,|\,\,x\in A_j,z\in L_j^\perp, |z|<C_j\si \},
	\end{split}
	\end{equation}
	and smooth functions
	\begin{equation*}
	u_{j,i}:A_j\sm\cup_k d_{ijk}\to L_j^\perp \qquad \mbox{for }i=1,...,N_j,
	\end{equation*}
	giving the graphical decomposition of $\Si\cap \cA_j$.\\
	As in \cite{SiEX} one then finds
	\begin{equation*}
	\exists\si_n\searrow 0 :\quad\Si\sm \cup_j B_{\si_n}(\xi_j) = \overline{\Phi}_n(S), \quad S\simeq \Si_n \sm \cup_j B_{\si_n}(\xi_j),
	\end{equation*}
	for smooth immersions $\overline{\Phi}_n:S\to\R^3$ of a two dimensional manifold $S$ with boundary that is now independent of $n$.\\	
	Hence, using the techniques of \cite{La} as in \cite{ScBP}, one has that $\Si$ is a varifold induced by a branched immersion $\Phi:\Si_{\cg'}\to\R^3$.\\	
	Finally, since $\Si$ is of class $C^\infty$ out of finitely many points, the generalized boundary of $\Si$ is some $\si_\Si=\nu\cH^1\res\Ga$ for a field $\nu$ with $|\nu|=1$ ae. By convergence of the boundary measures $\pa\Si_n\overset{\star}{\rightharpoonup}\pa\Si$, also the generalized boundaries converge: $\si_{\Si_n}=co_n\cH^1\res\Ga\overset{\star}{\rightharpoonup}\si_\Si$. Since $co_n$ converges up to subsequence to some field $W$ weakly in $L^2(\Ga)$, then $G(\Si)=\int_\Ga \lgl \vec{k}_\Ga, -\nu \rgl  \,d\cH^1=\lim_n\int_\Ga \lgl \vec{k}_\Ga, -co_n \rgl  \,d\cH^1=\lim_n G(\Si_n)$ and $W=\nu$.\\
	Hence by Gauss-Bonnet and the usual comparison arguments of \cite{SiEX} one has that the varifold $\Si$ verifies the required minimization property, that is $\cW(\Si)\le\cW(V)$ for any $V\in \cC(\Si_{\cg'})$ for some $\cg'\le\cg$.	
\end{proof}

\begin{remark}
	In the following we will need the fact that in the proof of proposition \ref{simon} one finds (see \cite[(3.55)]{SiEX})
	\begin{equation}\label{debolistarconv}
	\begin{split}
	&|H_n|^2\cH^2\res \Si_n \overset{\star}{\rightharpoonup} |H_\Si|^2\mu_\Si ,\\
	&|\sff_n|^2\cH^2\res \Si_n \overset{\star}{\rightharpoonup} |\sff_\Si|^2\mu_\Si,
	\end{split}
	\end{equation}
	weakly as measures on $\R^3\sm\cup_j \{\xi_j \}$, where $\mu_\Si$ is the weight measure of the limit varifold.
\end{remark}

\textcolor{white}{a}

\subsection{Straight line boundary datum}\label{sub2}

In this section we deal with the minimization of the Willmore energy in the case of complete unbounded surfaces with a straight line as boundary, leading to some tools that we will use in the following.\\
Let $r\con\R^3$ be a straight line and fix an integer $\cg\ge 1$. We now consider in detail the minimization problem $\min \{\cW(\Si)| \Si\in\cA \}$.
We can state and prove the first main result of the subsection.

\begin{prop} \label{prop1}
	If $\cg\ge 1$, the minimization problem \eqref{ref21} has no solution. Hence the same holds for the minimization problems \eqref{problemr} and \eqref{probr}.
\end{prop}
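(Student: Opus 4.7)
The plan is to argue by contradiction, working through the equivalent formulation of \eqref{ref21} in $\cB$ provided by Corollary \ref{corequiv}. Suppose a minimizer $\Si^* \in \cB$ exists with value $m := (\cW+G)(\Si^*)$. By the analog of the construction given in the Remark in the Introduction (cutting and pasting a truncated inverted closed Willmore minimizer of genus $\cg$ to produce a competitor in $\cB$), we have $m \le \be_\cg - 2\pi$. Lemma \ref{cop} extends to the class $\cB$, since its proof uses only smooth conformal diffeomorphisms of $\R^3$ acting near the boundary circle and is not affected by the single interior $C^{1,1}$ point of elements of $\cB$; hence by the conformal invariance of $\cW + G$ I obtain a sequence of minimizers $\Si^*_n \in \cB$ (each attaining $m$) with outward conormals $co_n \to p$ in $L^2(\S^1)$, equivalently $G(\Si^*_n) \to 2\pi$ and $\cW(\Si^*_n) \to m - 2\pi$.

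Next I form the varifolds $V_n := \Si^*_n \cup O$, where $O := \{(x,y,0) \in \R^3 : x^2 + y^2 \ge 1\}$ is the exterior planar region of $\S^1$; this mirrors the closure $\Si^{ext}$ used in the proof of Theorem \ref{nonexS1}. Each $V_n$ is a closed genus-$\cg$ asymptotically flat varifold with one end, whose generalized boundary $(co_n - p)\,\cH^1\res\S^1$ on $\S^1$ vanishes in $L^2$. Combining the uniform bound $\cW(V_n) \to m - 2\pi$ with the diameter control from the monotonicity formula \eqref{monot} (argued exactly as at the opening of the proof of Proposition \ref{simon}) and standard varifold compactness, I extract a subsequential limit $V_\infty$: a closed rectifiable asymptotically flat varifold with one end satisfying $\cW(V_\infty) \le m - 2\pi \le e_\cg$. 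A bubbling/energy-identity analysis, leveraging the strict subadditivity $\be_{g_1+g_2} \le \be_{g_1}+\be_{g_2} - 4\pi$ from \cite{BaKu}, rules out any nontrivial bubble separation (otherwise the total Willmore energy of $V_\infty$ plus the bubbles would exceed $e_\cg$), so $V_\infty$ has exact genus $\cg$. Combined with the lower bound $\cW(V_\infty) \ge e_\cg$ from the Remark in the Introduction, equality $\cW(V_\infty) = e_\cg$ holds and $V_\infty$ is a minimizer of the closed asymptotically flat genus-$\cg$ one-end problem.

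By the characterization in the Remark, $V_\infty = I \circ \Phi|_{S_\cg \setminus \Phi^{-1}(0)}$ for an embedded closed Willmore minimizer $\Phi : S_\cg \to \R^3$ of genus $\cg$ with $\cW(\Phi) = \be_\cg$. Hence the closed surface $\Phi(S_\cg) = I(V_\infty) \cup \{0\}$ contains $I(O) \cup \{0\} = D$, the flat closed unit disk in $\{z=0\}$. By the regularity theorem of Rivière (\cite[Theorem I.3]{Ri08}), $\Phi(S_\cg)$ is real-analytic, so by analytic continuation it must be entirely contained in the plane $\{z=0\}$---incompatible with $\Phi$ being a closed embedded surface of genus $\cg \ge 1$. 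This contradiction shows that no minimizer of \eqref{ref21} exists, and by Corollary \ref{corequiv} the same conclusion holds for \eqref{problemr} and \eqref{probr}.

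The main obstacle is the genus-preservation step in the varifold limit: the conformal maps produced by Lemma \ref{cop} degenerate as $G \to 2\pi$, so controlling the topology of $V_\infty$ requires the bubbling analysis sketched above, whose crucial input is the strict Bauer-Kuwert genus-subadditivity for $\be_\cg$. Everything else---the reduction to $\cB$, the closure with $O$, the matching of conormals in the limit, and the analytic-continuation contradiction---is routine once the limit varifold is understood.
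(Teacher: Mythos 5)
There is a genuine gap, and it sits exactly where you flag "the main obstacle": the genus-preservation step for the varifold limit $V_\infty$. Your proposed resolution — that strict superadditivity $\sum_i e_{\cg_i} > e_{\cg}$ from \cite{BaKu} rules out bubble separation — only excludes the genus splitting into \emph{two or more} nontrivial pieces. It does not exclude the scenario in which the entire genus escapes into a \emph{single} bubble while $V_\infty$ is flat: in that case the energy identity reads $0 + e_\cg \le m - 2\pi \le e_\cg$, which is perfectly consistent and yields no contradiction. Worse, this single-bubble degeneration is not a hypothetical loophole but is precisely what the maps of Lemma \ref{cop} produce: as $\al\to0$ the composition $I_{1,v}^{-1}\circ T_{-v/2}\circ\bR\circ D_\al\circ T_{v/2}\circ I_{1,v}$ concentrates the compact core of $I_{1,v}(\Si^*)$ (hence all the topology and all the Willmore energy) at the single boundary point $-v\in\S^1$, while the rest of $\Si^*_n$ flattens onto the disk $D$. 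So $V_n\to\{z=0\}$ as varifolds, $\cW(V_\infty)=0$, the genus is lost, and the planarity/analyticity contradiction you aim for never materializes. Since the existence of the original minimizer $\Si^*$ is in no way contradicted by the degeneration of its conformal images, the argument does not close. (There are further, more repairable issues: $V_n=\Si^*_n\cup O$ has nonzero generalized boundary $(co_n-p)\,\cH^1\res\S^1$ for each finite $n$, so regularity of the limit across $\S^1$ and the identification of $V_\infty$ with a branched immersion to which the lower bound $e_\cg$ applies would each require the full Simon-type machinery; but these are secondary to the bubbling failure.)

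For contrast, the paper's proof of Proposition \ref{prop1} is local and avoids limits entirely: any minimizer of \eqref{ref21} is a critical point and must satisfy the Navier boundary condition $\sff(co,co)\equiv0$ on $\S^1$; conformal invariance of $\cW+G$ makes every conformal image of a minimizer again a minimizer; but the map $F$ of Lemma \ref{cop} can be arranged (by inspecting the geometry of radial curves approaching the center of inversion $v$) so that $[\sff_{F(\Si)}(co_{F(\Si)},co_{F(\Si)})](v)>0$, violating the necessary condition. If you want to salvage your global/compactness strategy, you would need either to prove that no bad point forms for the particular sequence $\Si^*_n$ (false for the sequence produced by Lemma \ref{cop}), or to handle the single-bubble case by an independent argument — at which point you are essentially reproving Lemmas \ref{xiinterno}--\ref{xibordo}, which the paper develops only later and for a different purpose.
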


\begin{proof}
	Suppose by contradiction that $\Si$ is a minimizer of problem \eqref{ref21}. It is known (see for example \cite{Pa}) that a necessary boundary condition is then
	$$H=\lgl \vec{k}, N \rgl \qquad\mbox{on }\S^1,$$
	where $\vec{k}$ is the curvature vector $\vec{k}(p)=-p$ of $\S^1$ and $N$ is the unit normal field orienting $\Si$. This is equivalent to
	\begin{equation}\label{ref23}
	\sff(co,co)\equiv0 \qquad\mbox{on }\S^1,
	\end{equation}
	where $\sff(v,w)=-\lgl \pa_v N, w \rgl$ for any $v,w\in T\Si$ is the second fundamental form of $\Si$ and $co$ is the unit outward conormal of $\Si$ (see for example \cite{AbToCS}, page 190). Up to rotation let $v=(-1,0,0)$ of multiplicity $1$ in $\Si$. By the same arguments and using the notation of Lemma \ref{cop} we can construct a conformal map $F:U\to F(U)$ with $U$ open with $\Si\con U$ such that
	\begin{equation} \label{conff}
	\begin{split}
		&i)\,\,F(p)=\begin{cases}
			I_{1,v}^{-1}\circ T_{-v/2}\circ \bR\circ D_\al \circ T_{v/2} \circ I_{1,v} (p)  & p\in U\sm\{ v\},\\
			v & p=v,
		\end{cases},\\
		&ii)\,\, d_{C^1}\bigg(\bR\circ D_\al\circ T_{v/2} \circ I_{1,v} (\Si)\sm B_\ga(0), \Pi  \sm B_\ga(0) \bigg)<\be,\\
		&iii)\,\, co_{\Pi} \equiv (0,0,-1),\\
		&iv)\,\, 0\not\in T_{-v/2}\circ \bR\circ D_\al \circ T_{v/2} \circ I_{1,v} (\Si),
	\end{split}
	\end{equation}
	for arbitrarily small $\be,\ga\in(0,1)$ and corresponding sufficiently small $\al=\al(\be,\ga)\in(0,1)$, where $\Pi$ is a half plane passing through the origin with $\pa\Pi=\{x=z=0\}$, and $\,co_{\Pi}$ is conormal vector of $\Pi$. As usual, condition $ii)$ means that $\bR\circ D_\al\circ T_{v/2} \circ I_{1,v} (\Si)\sm B_\ga(0)$ is a graph of a function $f$ over $\Pi\sm B_\ga(0)$ with $C^1$ norm smaller than $\be$. Since $D(I_{1,v}(\Si))<+\infty$, along almost every radial direct $t\hat e$ on $\Pi$, i.e. $\hat e \in \Pi$ and $|\hat e|=1$, one has that $(\nabla^2 f)(t\hat e) \to 0$ at $t\to\infty$. For a such direction consider the curve $\ga_{\hat e}(t)=(F\circ f)(t\hat e)$ contained in $F(\Si)$; such curve tends to $v$ as $t\to\infty$ and forms an angle $\te_{\hat e}$ with $\{z=0\}$, and the choice of $\hat e$ implies that, up to reparametrization and for large times, the curve is close in $C^2$ norm to the geodesic in $S^2\cap B_r(v)\cap \{z\ge0\}$ passing from $v$ forming the same angle with $\{z=0\}$ at $v$. Therefore for $\hat e$ sufficiently close to $(0,0,1)$ we have that $[\sff_{F(\Si)}(\tau_{\ga_{\hat e}}(t),\tau_{\ga_{\hat e}}(t))](v) \ge \tfrac12 \sff_{S^2}(co_{F(\Si)},co_{F(\Si)})>0$ for $t$ large, where $\tau_{\ga_{\hat e}}$ is the tangent vector of $\ga_{\hat e}$. Since we can choose $\hat e$ arbitrarily close to $(0,0,1)$, by continuity we see that $[\sff_{F(\Si)}(co_{F(\Si)},co_{F(\Si)})](v)>0$. However since $F$ is conformal we have $(\cW+G)(F(\Si))=(\cW+G)(\Si)$, and then $F(\Si)$ is a minimizer too and has to satisfy \eqref{ref23}, which gives a contradiction.\\
	By Corollary \ref{corequiv}, problem \eqref{problemr} does not admit a minimizer.
\end{proof}
%

\begin{prop} \label{ref27}
	Let $\Si_n$ be a minimizing sequence of embedded surfaces for the problem \eqref{probr}. Then, up to subsequence and up to rescaling, $\Si_n$ converges in the sense of varifolds to a multiplicity $1$ half plane $\Pi$ with boundary $r$ on every ball $B_R(0)\con\R^3$.
\end{prop}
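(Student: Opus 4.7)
The plan combines Proposition \ref{prop1} (non-existence of a minimizer) with a concentration-compactness argument modeled on Proposition \ref{simon} and the Bauer--Kuwert inversion method \cite{BaKu}, exploiting the scale invariance of the problem under dilations centered at any point of $r$. Up to translating along $r$ I may assume $0\in r$; dilations $D_\la$ centered at $0$ then preserve both $r$ and $\cW$, so any rescaling of a minimizing sequence is still minimizing.

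First I would prove the uniform mass bound $|\Si_n\cap B_\ro(0)|\le C\ro^2$ for all $\ro>0$ and $n$. Applying the monotonicity formula \eqref{monot} with $p_0=0$, the boundary term $T_{0,\ro}$ vanishes identically because $\lgl p,co_n(p)\rgl=0$ for every $p\in r$ (the conormal is orthogonal to $r$). Together with the asymptotic half-plane mass ratio $\lim_{\ro\to\infty}\nu_n(B_\ro(0))/\ro^2=\pi/2$, the bound $\cW(\Si_n)\le\be_\cg-4\pi+o(1)$, and a Cauchy--Schwarz estimate on $\int_{B_\ro}\lgl\vec H_n,p\rgl/\ro^2$, monotonicity yields $A_n(\ro)\le C$ uniformly in $n,\ro$, which, combined with the uniform $D(\Si_n)$-bound coming from Gauss--Bonnet, gives varifold precompactness on every $B_R(0)$.

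Next I would choose the rescaling and identify the limit. By Proposition \ref{prop1} the sequence cannot converge smoothly to an element of $\cA$, so the concentration analysis of Proposition \ref{simon} produces a nontrivial bad-point set. Using the dilations $D_{\la_n}$ (combined with translations along $r$ when necessary) and the scale invariance of $\int_{B_r(\xi)}|\sff|^2$, one can select $\la_n$ such that the rescaled sequence $\tilde\Si_n=D_{\la_n}\Si_n$ has no bad points in any fixed $B_R(0)$ eventually. On every $B_R(0)$ the graphical decomposition and biharmonic comparison of Simon, as adapted in Proposition \ref{simon}, then give smooth convergence to a smooth embedded limit $\Si_\infty\cap B_R(0)$ with boundary $r\cap B_R(0)$, producing globally a smooth asymptotically flat embedded surface $\Si_\infty$ of some genus $\cg'\le\cg$ with boundary $r$. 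A bubble-energy balance in the spirit of \cite{BaKu, KuSc} reads
\[
\cW(\Si_\infty)+(\be_{\cg-\cg'}-4\pi)\le \liminf_n\cW(\tilde\Si_n)=\inf_{\cA}\cW=\be_\cg-4\pi,
\]
and strict subadditivity of $\be_\cdot$ from \cite{BaKu} forces $\cg'\in\{0,\cg\}$. The case $\cg'=\cg$ is excluded by Proposition \ref{prop1}, so $\cg'=0$ and $\cW(\Si_\infty)\le 0$; hence $\Si_\infty$ is a union of planes with boundary on $r$, and embeddedness of the $\tilde\Si_n$ together with the half-plane asymptote force $\Si_\infty$ to be a multiplicity-one half-plane with boundary $r$.

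The principal technical obstacle is the bubble-energy balance and the strict-subadditivity step excluding $0<\cg'<\cg$: one must identify the bubbles escaping to infinity after rescaling with inverted closed surfaces via Bauer--Kuwert Theorem~2.2, and bound their energy from below by $\be_{\cg-\cg'}-4\pi$, so that the missing energy $\inf_\cA\cW-\cW(\Si_\infty)$ is large enough. The scale invariance of both $\cW$ and of $r$ is what makes the rescaling compatible with the varifold convergence and with the conversion of interior curvature concentrations into quantifiable bubbles in the limit.
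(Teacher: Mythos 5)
Your proposal has two genuine gaps, the first of which is a circularity. You use $\liminf_n\cW(\tilde\Si_n)=\inf_{\cA}\cW=\be_\cg-4\pi$ in the bubble-energy balance, but that identity is precisely Corollary \ref{iinf}/Theorem \ref{thmr}, which the paper derives \emph{from} Proposition \ref{ref27} (via Lemma \ref{convconormali}); at this stage only the upper bound $\inf_\cA\cW\le\be_\cg-4\pi$ is available. With only the upper bound, your balance gives $\cW(\Si_\infty)\le e_\cg-e_{\cg-\cg'}$, and excluding the cases $0<\cg'\le\cg$ then requires the lower bound $\cW(\Si_\infty)\ge e_{\cg'}$ for asymptotically flat surfaces of genus $\cg'$ with boundary $r$ --- which is again the not-yet-proven half of Corollary \ref{iinf}. (Proposition \ref{prop1} only rules out the existence of a \emph{minimizer}; to use it you must first know that $\Si_\infty$ attains the infimum, which is the same missing fact.) The second gap is the selection of $\la_n$ so that $D_{\la_n}\Si_n$ has no bad points in any fixed ball: a single scalar dilation cannot in general separate several concentration points, and the subsequent claims --- smooth convergence on every ball to an asymptotically flat limit of genus $\cg'$, and the quantization $\ge\be_{\cg-\cg'}-4\pi$ of the escaping energy --- are substantial no-neck/energy-identity statements that you assert rather than prove.

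The paper's proof is far more elementary and avoids all of this. It takes the uniform bound on $D(\Si_n)$ from the Gauss--Bonnet identity for asymptotically flat surfaces, then uses the \emph{definition} of asymptotic flatness to pick $R_n$ such that $\Si_n\sm B_{R_n}(0)$ is the graph of a function $f_n$ with $\|f_n\|_{C^1}<\tfrac1n$ over a half-plane; the rescaling is $\Si_n':=\tfrac{R}{R_n}\Si_n$, the area bound on $B_R(0)$ comes from integrating the tangential divergence of the position field (essentially your monotonicity computation), and the limit is identified as a density-one half-plane directly from the $C^1$-smallness of the rescaled graphs. No appeal to Proposition \ref{prop1}, no bubbling, and no a priori knowledge of $\inf_\cA\cW$ is needed. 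If you want to salvage your route, you would have to set up an induction on the genus and prove an energy-quantization lemma for the necks, which is a much heavier machine than the statement requires.
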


\begin{proof}
	Let us prove the convergence of $\Si_n$ as varifolds in any open ball $B_R(0)$. For any $n$ the Gauss-Bonnet like identity for asymptotically flat manifolds reads
	\begin{equation*}
	\cW(\Si_n)=\frac{1}{4}D(\Si_n)+\pi\chi(\Si_n)+\pi,
	\end{equation*}
	where $\chi(\Si_n)=2-2\cg-1$ is the Euler characteristic of $\Si_n$. Such relation can be obtained by approximation letting $\ro\to\infty$ in the classical Gauss-Bonnet equation for the compact surfaces $\Si_n\cap B_\ro(0)$ keeping $n$ fixed. By minimality of the sequence we get a uniform bound on $D(\Si_n)$. Moreover $\pa\Si_n\cap B_R(0)=r\cap B_R(0)$ has finite constant length, depending only on $R$.\\
	By the definition of asymptotically flat surface, for any $n$ there is $R_n>0$ be such that $\pa B_{R_n}(0)\cap \Si_n$ has length bounded by $2\pi R_n$ and $\Si_n\sm B_{R_n}(0)$ is the graph of a function $f_n$ with $\|f_n\|_{C^{1}}<\tfrac1n$. Consider the sequence $\Si_n':=\frac{R}{R_n}\Si_n$, so that $\cH^1(\pa B_R(0)\cap \frac{R}{R_n}\Si_n)\le2\pi R$ is uniformly bounded in $n$.\\
	In order to get the desired varifold convergence, we only need to estimate $|\Si'_n\cap B_R(0)|$. Call $\Si''_n:=\frac{1}{R}\Si'_n$, then
	\begin{equation*}
	\begin{split}
		2|\Si''_n\cap B_1(0)| &= \int_{\Si''_n\cap B_1(0)} div_{\Si''_n} x =-2\int_{\Si''_n\cap B_1(0)} \lgl \vec{H}''_n,x\rgl +\int_{\pa(\Si''_n\cap B_1(0))} \lgl x, co''_n(x)\rgl \,d\cH^1=\\&= |\Si''_n\cap B_1(0)| -\int_{\Si''_n\cap B_1(0)} (1-|x^\perp|^2)-\int_{\Si''_n\cap B_1(0)} |\vec{H}''_n+x^\perp|^2+\\&\quad+\cW(\Si''_n\cap B_1(0))+\int_{\pa (\Si''_n\cap B_1(0))} \lgl x, co''_n(x)\rgl \,d\cH^1 \le \\
		&\le |\Si''_n\cap B_1(0)|+\cW(\Si'_n)+\cH^1(\pa (\Si''_n\cap B_1(0)))\le\\
		&\le |\Si''_n\cap B_1(0)|+\cW(\Si'_n)+C(R),
	\end{split}
	\end{equation*}
	hence $|\Si'_n\cap B_R(0)|=R^2|\Si''_n\cap B_1(0)|$ is uniformly bounded in $n$, depending on $R$. Therefore we got that $\Si_n'$ converges as varifolds on the ball $B_R(0)$, up to passing to a subsequence. By construction, the condition $\|f_n\|_{C^1}<\tfrac1n$ implies that the limit varifold is a density $1$ half plane.
\end{proof}

\begin{lemma} \label{convconormali}
	Let $\Si_n$ be a minimizing sequence of embedded surfaces for the problem \eqref{probr} and assume $\Si_n\to\Pi$ in the sense of varifolds, where $\Pi$ is a half plane. Denote by $co_n$ and $co_\Pi$ respectively the conormal fields of $\Si_n$ and $\Pi$. Then for every $R>0$ it holds
	\begin{equation*}
	co_n\to co_\Pi \qquad \mbox{strongly in }L^2(r\cap B_R(0)).
	\end{equation*}
\end{lemma}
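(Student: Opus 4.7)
The plan is to combine the weak convergence of the generalized boundary measures inherited from the varifold convergence with the pointwise identity $|co_n|\equiv 1\equiv|co_\Pi|$, and then invoke the Hilbert space principle that weak convergence together with convergence of norms yields strong convergence.

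First I would note that, as already used at the end of the proof of Proposition \ref{simon}, varifold convergence $\Si_n\to\Pi$ implies that the generalized boundaries converge weakly as $\R^3$-valued Radon measures:
\begin{equation*}
\si_{\Si_n} = co_n\,\cH^1\res r \;\overset{\star}{\rightharpoonup}\; co_\Pi\,\cH^1\res r = \si_\Pi.
\end{equation*}
Since $|co_n|\equiv 1$ on $r$, the family $\{co_n\}$ is bounded in $L^2(r\cap B_R(0);\R^3)$, so some subsequence converges weakly in $L^2$ to a limit $W$. Pairing against continuous vector-valued cut-offs approximating $\mathbf{1}_{r\cap B_R(0)}$ and using the uniform pointwise bound $|co_n|\le 1$ to control the approximation error, the weak$^\star$ limit can be identified with the weak $L^2$ limit, forcing $W=co_\Pi$ almost everywhere on $r\cap B_R(0)$; uniqueness of the limit then promotes the convergence to the full sequence.

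Second, using $|co_n|\equiv 1\equiv |co_\Pi|$ one has
\begin{equation*}
\|co_n\|^2_{L^2(r\cap B_R(0))} = \cH^1(r\cap B_R(0)) = \|co_\Pi\|^2_{L^2(r\cap B_R(0))},
\end{equation*}
so expanding
\begin{equation*}
\|co_n-co_\Pi\|_{L^2}^2 = \|co_n\|_{L^2}^2 - 2\lgl co_n,co_\Pi\rgl_{L^2} + \|co_\Pi\|_{L^2}^2
\end{equation*}
and using the weak $L^2$ convergence just established (tested against $co_\Pi\in L^2(r\cap B_R(0);\R^3)$), the right-hand side tends to $2\|co_\Pi\|_{L^2}^2 - 2\|co_\Pi\|_{L^2}^2 = 0$, which is the desired strong convergence.

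The main subtlety lies in the first step: weak$^\star$ convergence of Radon measures only provides testing against $C^0_c(\R^3)$ functions, while the $L^2$ inner product pairs against rougher fields concentrated on $r\cap B_R(0)$. However, the uniform bound $|co_n|\le 1$ combined with a standard approximation of the indicator of $r\cap B_R(0)$ by continuous cut-offs is enough to close this gap, and no deeper regularity for $co_n$ is needed.
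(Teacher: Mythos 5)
Your second step and your conclusion are exactly the paper's: once $co_n\rightharpoonup co_\Pi$ weakly in $L^2(r\cap B_R(0))$ is known, the identity $\|co_n\|_{L^2}=\|co_\Pi\|_{L^2}$ (both fields being unit) upgrades this to strong convergence, precisely as in the last lines of the paper's proof. The problem is your first step. Varifold convergence $\Si_n\to\Pi$ does \emph{not} by itself imply weak$^\star$ convergence of the generalized boundaries $\si_{\Si_n}=co_n\,\cH^1\res r$ to $co_\Pi\,\cH^1\res r$. What varifold convergence gives (Appendix A) is convergence of the full first variations $\de \Si_n(X)=-2\int\lgl\vec{H}_n,X\rgl\,d\mu_{\Si_n}+\int\lgl X,co_n\rgl\,d\cH^1\res r$; the splitting into the mean curvature part and the boundary part need not pass to the limit, because $\vec{H}_n\,\mu_{\Si_n}$ may concentrate on the line $r$ and transfer mass into the boundary term. (Think of half-planes with a thin half-cylindrical lip of radius $1/n$ glued along $r$: the varifold limit is $\Pi$, but the conormals converge to a rotated vector.) Ruling out exactly this concentration is the analytic content of the lemma, and it is what the paper's proof spends all its effort on: the monotonicity formula with boundary yields the uniform area decay $|\Si_n\cap B_\si(p_0)|\le M(R)\si^2$ for $p_0\in r$, and then Cauchy--Schwarz together with the uniform Willmore bound shows that $\int_{\Si_n}\lgl\vec{H}_n,\chi X\rgl\to 0$ when $\chi$ is supported in a thin tube around $r$, so that the limit of the first variation isolates the boundary term and identifies the weak limit of $co_n$ as $co_\Pi$.

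Your appeal to ``the same fact as in Proposition \ref{simon}'' does not close the gap: there the convergence $\pa\Si_n\overset{\star}{\rightharpoonup}\pa\Si$ comes from the compactness theorem for curvature varifolds with boundary (Theorem \ref{thm1}), whose hypotheses are the uniform bound on $\int|\sff_n|^2$ and on the boundary mass --- it is not a consequence of bare varifold convergence, which is all you invoke. Your route can be repaired: $D(\Si_n)$ is uniformly bounded by the Gauss--Bonnet argument in Proposition \ref{ref27} and the boundary mass is $\cH^1\res r$ on compact sets, so Theorem \ref{thm1} applies, the subsequential curvature-varifold limit must coincide with $\Pi$ together with its canonical boundary $co_\Pi\,\cH^1\res r$, and uniqueness of this limit passes the convergence to the full sequence. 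But as written, the step that carries the whole difficulty of the lemma is asserted rather than proved.
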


\begin{proof}
	Let us rewrite the monotonicity formula with boundary as
	\begin{equation} \label{reff11}
	\begin{split}
		\frac{|T_\si|}{\si^2}-\frac{|T_\ro|}{\ro^2}+\int_{T_{\ro,\si}} \frac{|(p-p_0)^\perp|^2}{|p-p_0|^4}=&\int_{T_\ro} \frac{1}{\ro^2}\lgl \vec{H}, (p-p_0)^\perp\rgl -\int_{T_\si} \frac{1}{\si^2} \lgl \vec{H}, (p-p_0)^\perp\rgl -\int_{T_{\ro,\si}} \bigg\lgl \vec{H}, \frac{(p-p_0)^\perp}{|p-p_0|^2} \bigg\rgl+\\
		&+ \frac{1}{2}\int_{r_\ro} \frac{1}{\ro^2} \lgl co, p-p_0\rgl +\frac{1}{2}\int_{r_{\ro,\si}} \bigg\lgl co, \frac{p-p_0}{|p-p_0|^2} \bigg\rgl +\frac{1}{2}\int_{r_\si} \frac{1}{\si^2} \lgl co, p-p_0\rgl,
	\end{split}
	\end{equation}
	valid for $T=\Si_n$ for any $n$, and $0<\si<\ro<R/2$, $p_0\in r\cap B_{R/2}(0)$, where for any set $A$ we have used $A_\si=A\cap B_\si(p_0)$ and $A_{\ro,\si}=A_\ro\sm A_\si$. By orthogonality, the scalar products in the last three integrals of \eqref{reff11} vanish. Moreover for any $\de>0$ we can estimate
	$$ -\int_{T_{\ro,\si}} \bigg\lgl \vec{H}, \frac{(p-p_0)^\perp}{|p-p_0|^2} \bigg\rgl \le \frac{1}{2\de} \int_{T_{\ro,\si}} |H|^2 +\frac{\de}{2}\int_{T_{\ro,\si}} \frac{|(p-p_0)^\perp|^2}{|p-p_0|^4}. $$
	Absorbing the last integral on the left in \eqref{reff11} and neglecting the resulting positive term, we estimate
	$$	|(\Si_n)_\si|\le \frac{\si^2}{\ro^2}|(\Si_n)_\ro|+M(R)\si^2,$$
	for a positive constant $M(R)$ independent of $n,\si,p_0$ and depending only on $R$. Being $|\Si_n\cap B_R(0)|$ uniformly bounded in $n$ by the varifold convergence, we get
	\begin{equation} \label{reff12}
	|(\Si_n)_\si|\le M(R)\si^2,
	\end{equation}
	for another positive constant $M(R)$ independent of $n,\si,p_0$ and depending only on $R$.\\
	Now fix a vector field $X\in C^1_c(B_R(0);\R^3)$. Also choose $h>0$ small and a cylindric cut off function $\chi\in C^\infty(\R^3)$ such that $\chi|_r\equiv1$ and $\chi(p)=0$ if $d(p,r)\ge h$. By varifold convergence toward a half space $\Pi$, we have
	\begin{equation}\label{A}
	\lim_n \bigg( \int_{\Si_n\cap B_R(0)} \lgl \vec{H}_n,\chi X\rgl + \int_{\Si_n\cap B_R(0)} \lgl \vec{H}_n,(1-\chi) X\rgl +\int_{r\cap B_R(0)} \lgl X,co_n\rgl \bigg)= \int_{r\cap B_R(0)} \lgl X,co_\Pi\rgl .
	\end{equation}
	Also
	\begin{equation}\label{B}
	\lim_n \int_{\Si_n\cap B_R(0)} \lgl \vec{H}_n,(1-\chi) X\rgl =0 .
	\end{equation}
	Moreover
	$$\bigg| \int_{\Si_n\cap B_R(0)} \lgl \vec{H}_n,\chi X\rgl \bigg|\le \cW(\Si_n)^{1/2}\|X\|_\infty |\Si_n\cap\{\chi\ge0 \}\cap B_R(0)|^{1/2} .  $$
	For any $h$ small there is $\si>0$ such that $(\{\chi\ge0 \}\cap B_R(0))\con \bcup_{i=1}^{\frac{R}{\si}+1} B_\si(x_i) $ for suitable $x_i\in r\cap B_R(0)$, then by \eqref{reff12} we get
	\begin{equation}\label{C}
	\lim_n \bigg| \int_{\Si_n} \lgl \vec{H}_n,\chi X\rgl \bigg| \le \cW(\Si_n)\|X\|_\infty \sqrt{M} \sqrt{R+\si}\sqrt{\si},
	\end{equation}
	with $M=M(R)$ independent of $n,\si,\chi$.\\ 
	Up to subsequence $co_n\wto W$ weakly in $L^2(r\cap B_R(0))$, so there exists the limit $\lim_n \int_{r\cap B_R(0)} \lgl X,co_n\rgl= \int_{r\cap B_R(0)} \lgl X,W\rgl$; then by \eqref{A} there exists also the limit $\lim_n \int_{\Si_n\cap B_R(0)} \lgl \vec{H}_n,\chi X\rgl$. Sending $\si\to0$, and thus $h\to0$, we get
	\begin{equation*}
	\begin{split}
		\lim_{\si\searrow0}\lim_n &\bigg( \int_{\Si_n\cap B_R(0)} \lgl \vec{H}_n,\chi X\rgl + \int_{\Si_n\cap B_R(0)} \lgl \vec{H}_n,(1-\chi) X\rgl +\int_{r\cap B_R(0)} \lgl X,co_n\rgl \bigg)  = \lim_n \int_{r\cap B_R(0)} \lgl X,co_n\rgl=\\
		&=  \int_{r\cap B_R(0)} \lgl X,co_\Pi\rgl =  \int_{r\cap B_R(0)} \lgl X,W\rgl .
	\end{split}
	\end{equation*}
	This is true for any $X\in  C^1_c(B_R(0);\R^3)$, then $W=co_\Pi$. Since $co_n\wto co_\Pi$ weakly in $L^2(r\cap B_R(0))$ and $\|co_n\|_{L^2(r\cap B_R(0))} = \|co_\Pi\|_{L^2(r\cap B_R(0))} $, then the convergence also holds strongly in $L^2(r\cap B_R(0))$.\\
	Since this is true for any subsequence of $co_n$, the convergence $co_n\to co_\Pi$ also holds for the original sequence $co_n$.
\end{proof}

\begin{cor} \label{iinf}
	If $\cg\ge1$, the infimum of problem \eqref{probr} is equal to $\be_\cg-4\pi$.
\end{cor}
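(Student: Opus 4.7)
The plan is to combine the inversion correspondence
\[
\inf_{\cA} \cW = \inf_{\cB}(\cW + G) - 2\pi
\]
furnished by Lemma \ref{sphericalr} and Corollary \ref{corequiv} with the sharp identification $\inf_{\cF} \cW = \be_\cg - 4\pi$ of Theorem \ref{nonexS1}; it thus suffices to show $\inf_{\cB}(\cW + G) = \be_\cg - 2\pi$, from which the corollary follows.

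For the upper bound $\inf_{\cA} \cW \le \be_\cg - 4\pi$, the key observation is that the proof of Theorem \ref{nonexS1} actually produces a minimizing sequence $\Phi_n \in \cF$ with $\cW(\Phi_n) \to \be_\cg - 4\pi$ \emph{and} $G(\Phi_n) \to 2\pi$, so that $(\cW + G)(\Phi_n) \to \be_\cg - 2\pi$. Inverting each $\Phi_n$ at a boundary point $v_n \in \S^1$ of multiplicity one in $\Phi_n$ (which exists for embedded $\Phi_n$, up to a small perturbation if necessary), Lemma \ref{sphericalr} yields $\widetilde{\Si}_n := I_{v_n}(\Phi_n(\Si_\cg) \setminus \{v_n\}) \in \cA$ (after an isometry sending $I_{v_n}(\S^1)$ to the fixed line $r$) with
\[
\cW(\widetilde{\Si}_n) = (\cW + G)(\Phi_n) - 2\pi \longrightarrow \be_\cg - 4\pi.
\]

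For the lower bound $\inf_{\cA} \cW \ge \be_\cg - 4\pi$, I would fix $\Si \in \cA$; by Corollary \ref{corequiv} we may assume $\Si$ is embedded. Pick $q \notin \Si$ so that $I_q(r)$ is isometric to $\S^1$, producing $\widetilde{\Si} = I_q(\Si) \cup \{q\} \in \cB$ with $(\cW + G)(\widetilde{\Si}) = \cW(\Si) + 2\pi$. Given $\delta > 0$, apply Lemma \ref{cop} to $\widetilde{\Si}$ (its construction only requires the auxiliary inversion center $v \in \S^1$ to be distinct from the single $C^{1,1}$ singular point of $\widetilde{\Si}$, so the argument adapts verbatim to elements of $\cB$) to obtain a conformal diffeomorphism $F$ with $G(F \circ \widetilde{\Si}) > 2\pi - \delta$. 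Conformal invariance of $\cW + G$ yields
\[
\cW(F \circ \widetilde{\Si}) = (\cW + G)(\widetilde{\Si}) - G(F \circ \widetilde{\Si}) < \cW(\Si) + \delta.
\]
Mollifying $F \circ \widetilde{\Si} \in \cB$ near its singular point produces smooth competitors $\Phi_k \in \cF$ with $\cW(\Phi_k) \to \cW(F \circ \widetilde{\Si})$; Theorem \ref{nonexS1} gives $\cW(\Phi_k) \ge \be_\cg - 4\pi$, and hence $\cW(F \circ \widetilde{\Si}) \ge \be_\cg - 4\pi$. Combining the two displayed inequalities yields $\cW(\Si) > \be_\cg - 4\pi - \delta$, and the arbitrariness of $\delta$ concludes.

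The main technical obstacle is the mollification step: an element $\Psi \in \cB$ must be replaced by smooth surfaces in $\cF$ whose Willmore energy converges to that of $\Psi$ and whose boundary image is still exactly $\S^1$. The global $C^{1,1}$ assumption on $\Psi$ forces $|\sff_\Psi| \in L^\infty$, so a standard mollification performed in a local chart near the single singular point produces smooth immersions with $|\sff_\ep|^2 \to |\sff_\Psi|^2$ in $L^p$ for every $p < \infty$ and hence with $\cW(\Psi_\ep) \to \cW(\Psi)$; localizing the regularization away from $\pa \Si_\cg$ (or carrying it out tangentially along the boundary) keeps the $\S^1$ boundary condition intact.
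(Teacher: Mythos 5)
Your argument is correct, but for the lower bound it takes a genuinely different route from the paper. The paper's proof of this corollary is sequential and structural: it takes a minimizing sequence $\Si_n$ for \eqref{probr}, proves varifold convergence to a half plane after rescaling (Proposition \ref{ref27}), upgrades this to strong $L^2$ convergence of the conormals (Lemma \ref{convconormali}), and then computes directly how the conormal transforms under $I^{-1}_{1,v}$ to conclude that the inverted surfaces $\Si'_n\in\cB$ satisfy $G(\Si'_n)\to2\pi$, which combined with $\cW(\Si_n)=(\cW+G)(\Si'_n)-2\pi$ and Theorem \ref{nonexS1} gives the bound. You instead work with a single arbitrary embedded $\Si\in\cA$, invert it into $\cB$, and use Lemma \ref{cop} to conformally push $G$ above $2\pi-\de$ while keeping $\cW+G$ fixed --- exactly the mechanism of the paper's own proof of Corollary \ref{corequiv}, with Theorem \ref{nonexS1} replacing Corollary \ref{lowbound} as the final input. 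This bypasses Proposition \ref{ref27} and Lemma \ref{convconormali} entirely and actually yields the stronger pointwise statement $\cW(\Si)\ge\be_\cg-4\pi$ for every embedded $\Si\in\cA$; what the paper's heavier route buys is the structural information about minimizing sequences (convergence to a half plane, conormal convergence) that is reused elsewhere, e.g.\ the area estimate of Proposition \ref{ref27} in Proposition \ref{simon}. The two places where you lean on unstated regularization are (a) applying Lemma \ref{cop}, stated for smooth $\Phi\in\cF$, to the $C^{1,1}$ elements of $\cB$, and (b) mollifying near the singular point, which sits \emph{on} the boundary circle, while preserving the $\S^1$ boundary datum; both are legitimate and consistent with the paper's own practice (Lemma \ref{cop} is applied to elements of $\cB$ in Corollary \ref{corequiv}, and approximation is invoked in the same spirit at the end of Lemma \ref{xibordo}), but (b) in particular deserves a sentence more than ``localizing away from $\pa\Si_\cg$'', since the singular point cannot be avoided. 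Your explicit upper bound, obtained by inverting a minimizing sequence in $\cF$ with $G\to2\pi$, is welcome, as the paper leaves that half essentially implicit. Two cosmetic points: the inverted surface should be $I_{1,q}(\Si)\cup\{0\}$ rather than $I_{1,q}(\Si)\cup\{q\}$ (the point at infinity maps to the origin, cf.\ Lemma \ref{sphericalr}), and after announcing that it suffices to compute $\inf_{\cB}(\cW+G)$ you then estimate $\inf_{\cA}\cW$ directly, which is fine but makes the opening reduction redundant.
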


\begin{proof}
	Let $\Si_n$ be a minimizing sequence of embedded surfaces for the problem \eqref{probr} and by Proposition \ref{ref27} assume $\Si_n\to\Pi$ in the sense of varifolds, where $\Pi$ is a half plane. Up to small modifications we can assume $0\not\in\Si_n$ for any $n$. By Lemma \ref{convconormali} the conormal fields $co_n$ of $\Si_n$ converge in $L^2$ norm on compact subsets of the line $r$ to $co_\Pi$. Up to rotation and translation we can suppose that $co_\Pi\equiv (-1,0,0)$ and $r=\{(1/2,t,0)|t\in\R \}$. Let us perform the transformation
	\begin{equation*}
	I^{-1}_{1,v}(p)= \frac{p}{|p|^2}+(-1,0,0) \qquad\forall p\neq 0.
	\end{equation*}
	By Lemma \ref{sphericalr} we get surfaces $\Si_n':=\{(-1,0,0)\}\cup F(\Si_n)$ with $\pa \Si_n'=\S^1_1(0)$ such that
	\begin{equation} \label{ref24}
	\cW(\Si_n)=(\cW+G)(\Si'_n)-2\pi\ge \be_\cg -4\pi-2\pi + G(\Si'_n),
	\end{equation}
	where the last inequality follows by Theorem \ref{nonexS1}.\\
	The conormal $co'_n$ of $\Si'_n$ at a point $(\cos \te,\sen \te,0)=I^{-1}_{1,v}\big(\frac{1}{2},\frac{\sen \te}{2(1+\cos \te)},0\big)$ is
	\begin{equation*}
	\begin{split}
		(co'_n)_{(\cos \te,\sen \te,0)}=\frac{dI^{-1}_{1,v}(co_n)}{|dI^{-1}_{1,v}(co_n)|}\bigg|_{\big(\frac{1}{2},\frac{\sen \te}{2(1+\cos \te)},0\big)}= \bigg( co_n -2 \frac{\lgl q, co_n\rgl }{|q|^2}q \bigg)\bigg|_{q=\big(\frac{1}{2},\frac{\sen \te}{2(1+\cos \te)},0\big)}.
	\end{split}
	\end{equation*}
	The direct calculation then shows that if $co_n\to (-1,0,0)$ in $L^2\big(\big\{\big(\frac{1}{2},\frac{\sen \te}{2(1+\cos \te)},0\big) | \te \in (\pi-\al,\pi+\al)\big\}\big)$ for some $\al\in(0,\pi)$, then $co'_n \to \hat{p}$ in $L^2(\{ (\cos \te,\sen\te,0)|\te \in (\pi-\al,\pi+\al) \})$, where $\hat{p}$ is the vector field $\hat{p}_{(\cos\te,\sen\te,0)}=(\cos\te,\sen\te,0)$. By Lemma \ref{convconormali} such convergence happens for any $\al\in(0,\pi)$, then
	$$G(\Si'_n)=\int_{\S^1} \lgl p, (co'_n)_p\rgl \,d\cH^1(p)\to 2\pi.$$
	Hence passing to the limit in \eqref{ref24} gives the conclusion.
\end{proof}

\noindent Putting together Proposition \ref{prop1} with Corollary \ref{iinf}, we get the following

\begin{thm} \label{thmr}
	If $\cg\ge1$, problem \eqref{probr}, and equivalently problem \eqref{problemr}, has no solution and the infimum equals $\be_\cg-4\pi$.\\
\end{thm}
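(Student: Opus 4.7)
The plan is essentially to collect results already established in this subsection. The theorem makes two assertions: non-existence of a minimizer and the computation of the infimum; both for the problems \eqref{problemr} and \eqref{probr}. I will handle the two problems together by invoking the equivalence proved in Corollary \ref{corequiv}, and then simply cite the two propositions that do the actual work.

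First I would dispose of the non-existence statement. By Corollary \ref{corequiv}, the three minimization problems \eqref{problemr}, \eqref{probr}, and \eqref{ref21} are equivalent in the sense that they share the same infimum and, moreover, a minimizer of any one of them induces (via the spherical inversion discussed in Lemma \ref{sphericalr}) a minimizer of the others. In particular, a minimizer of \eqref{probr} or \eqref{problemr} would give a minimizer of \eqref{ref21}. But Proposition \ref{prop1} rules this out, since the necessary boundary condition $\sff(co,co)\equiv 0$ at $\S^1$ is shown to be incompatible with the conformal invariance of $\cW+G$ once one pushes the surface through the inversion $F$ of \eqref{conff}. Hence neither \eqref{probr} nor \eqref{problemr} admits a solution.

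Next, for the value of the infimum, I would simply quote Corollary \ref{iinf}, which already gives $\inf \eqref{probr} = \be_\cg - 4\pi$ via the blow-up/varifold-convergence argument using Proposition \ref{ref27} and Lemma \ref{convconormali}. The equivalence in Corollary \ref{corequiv} then transfers the same value to the infimum of \eqref{problemr}.

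There is no genuine obstacle at this stage: the entire content of the theorem has already been established piecewise in Propositions \ref{prop1}, \ref{ref27}, Lemma \ref{convconormali} and Corollary \ref{iinf}, and the only thing to verify is that the equivalence in Corollary \ref{corequiv} really does let us interchange \eqref{probr}, \eqref{problemr}, and \eqref{ref21} both for the value of the infimum and for the (non-)existence of minimizers. This is immediate because the inversion $I$ used in Lemma \ref{sphericalr} is a bijection between the admissible classes modulo a single point of multiplicity one (and points of multiplicity $\ge 2$ are ruled out by the lower bound $\cW\ge 4\pi>\be_\cg-4\pi$ from Corollary \ref{lowbound}, applied as in Corollary \ref{corequiv}). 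The proof therefore reduces to a one-line citation of Proposition \ref{prop1} together with Corollary \ref{iinf}.
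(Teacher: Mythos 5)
Your proposal is correct and is essentially identical to the paper's proof, which consists precisely of putting together Proposition \ref{prop1} (non-existence, already stated there for all three equivalent problems) with Corollary \ref{iinf} (the value of the infimum for \eqref{probr}), using the equivalence of Corollary \ref{corequiv} to transfer both conclusions to \eqref{problemr}. Nothing further is needed.
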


\textcolor{white}{text}

\subsection{Boundary data admitting minimizers}

\noindent In this part we prove the following result.

\begin{thm} \label{thmmain}
	\textcolor{white}{text}
	\begin{enumerate}[label={\normalfont(\roman*)}]
	\item If $\cg=1$, $\inf_{\cC(\Si_\cg)}\cW= \inf_{\cC_{imm}(\Si_\cg)} \cW$, and problem \eqref{problemmin} has no solution, then the infimum of the problem equals $\be_1-4\pi=2\pi^2-4\pi$.
	\item Let $\cg=1$. There exist infinitely many closed convex planar smooth curves $\Ga$ such that if $\inf_{\cC(\Si_\cg)}\cW= \inf_{\cC_{imm}(\Si_\cg)} \cW$, then problem \eqref{problemmin} has minimizers.
	\item If $\cg\ge2$, $\inf_{\cC(\Si_{\cg'})}\cW= \inf_{\cC_{imm}(\Si_{\cg'})} \cW$ and problem \eqref{problemmin} has no solution for any genus $1\le\cg'\le\cg$, then the infimum of the problem equals $\be_\cg-4\pi$.
	\end{enumerate}
\end{thm}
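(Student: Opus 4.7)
The strategy for parts (i) and (iii) is to apply the compactness and regularity Proposition \ref{simon} to a minimizing sequence $\Si_n$ in $\cC^*_{imm}(\Si_\cg)$, yielding a subsequential varifold limit $V \in \cC(\Si_{\cg'})$ for some $\cg' \le \cg$. Under the assumption that infima agree for the relevant intermediate genera, Proposition \ref{simon} ensures that $V$ is itself a minimizer in $\cC(\Si_{\cg'})$. The no-minimizer hypothesis then rules out $\cg' = 1$ in part (i), and rules out every $1 \le \cg' \le \cg$ in part (iii); hence $\cg' = 0$ in both cases.

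Next I would analyze the finitely many bad points $\{\xi_j\}$ produced by Proposition \ref{simon}. Using the annular graphical decomposition \eqref{annular1}--\eqref{annular2} at a scale $\si_n \searrow 0$ separating each bad point from the regular region, together with the diffeomorphism in \eqref{diffeom}, one assigns to each $\xi_j$ a genus $\cg_j \ge 0$ subject to the topological balance $\cg' + \sum_j \cg_j = \cg$. Performing a blow-up of $\Si_n$ at each $\xi_j$ at a scale tuned to the $L^2$-concentration of $\sff$ there: if $\xi_j \in \Ga$, the rescaled boundary curves $\Ga$ converge to the tangent line at $\xi_j$, so the blow-up limit is an asymptotically flat surface of genus $\cg_j$ with a straight line as boundary, and Theorem \ref{thmr} forces the energy concentrated at $\xi_j$ to be at least $\be_{\cg_j} - 4\pi$; if $\xi_j \notin \Ga$, the blow-up limit is a closed asymptotically flat surface, and the remark in the introduction (inversion plus Bauer--Kuwert) gives the same lower bound $\be_{\cg_j} - 4\pi$. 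Combining the lower semicontinuity encoded in \eqref{debolistarconv} with $\cW(V) \ge 0$ and the Kusner subadditivity $\be_{\cg_1+\cg_2} \le \be_{\cg_1} + \be_{\cg_2} - 4\pi$, which iterates to $\sum_j(\be_{\cg_j} - 4\pi) \ge \be_\cg - 4\pi$ whenever $\sum_j \cg_j = \cg$, one obtains
\begin{equation*}
\inf_{\cC(\Si_\cg)} \cW \;=\; \lim_n \cW(\Si_n) \;\ge\; \cW(V) + \sum_j (\be_{\cg_j}-4\pi) \;\ge\; \be_\cg - 4\pi.
\end{equation*}
The reverse inequality is the upper bound $\inf \cW \le \be_\cg - 4\pi$ recalled in the introduction, and equality proves both (i) and (iii).

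For part (ii) I would argue by the contrapositive of (i): it suffices to exhibit infinitely many smooth convex planar curves $\Ga$ admitting an embedded competitor of genus $1$ with Willmore energy strictly less than $\be_1 - 4\pi$. Let $T \subset \R^3$ be a Clifford torus with $\cW(T) = \be_1 = 2\pi^2$ and let $T^* := I_{1,p}(T \setminus \{p\})$ be its inversion through a suitable point $p \in T$, an asymptotically flat embedded genus $1$ surface with $\cW(T^*) = \be_1 - 4\pi$ and asymptotic plane $\Pi_\infty = p + (T_pT)$. For a plane $\Pi$ parallel to $\Pi_\infty$ at sufficient distance, $\Pi$ intersects $T^*$ transversally in a smooth planar curve $\Ga$; for suitable $p$ (e.g.\ lying on a symmetry axis of the torus of revolution), the residual reflection symmetries make $\Ga$ convex. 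The compact component $\Si_\Pi$ of $T^* \setminus \Pi$ opposite to the end has genus $1$ and
\begin{equation*}
\cW(\Si_\Pi) = \cW(T^*) - \cW(T^* \setminus \Si_\Pi) < \be_1 - 4\pi,
\end{equation*}
since $H$ does not vanish identically on the discarded end of $T^*$. Varying the distance of $\Pi$ from $\Pi_\infty$ produces a continuous family of pairwise non-similar convex planar $\Ga$, hence infinitely many examples.

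The main obstacle is the blow-up analysis at bad points: consistently assigning a genus $\cg_j$ to each blow-up limit (using the annular decomposition at the appropriate scale and the regularity of the outer piece from Proposition \ref{simon}), establishing the topological balance $\sum_j \cg_j = \cg - \cg'$, and ruling out loss of Willmore energy at intermediate scales during the rescaling. This requires a careful multi-scale iteration of Simon's scheme \cite{SiEX} together with the monotonicity formula \eqref{monot} to match the energy bookkeeping with the straight-line problem of Theorem \ref{thmr} at each boundary bad point and with the Bauer--Kuwert/Kusner estimates at each interior one.
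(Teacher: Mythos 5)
Your global architecture matches the paper's: Proposition \ref{simon} reduces to the case where the varifold limit is the flat disk ($\cg'=0$), the analysis then localizes at the finitely many bad points, splitting the interior and boundary cases, and part (ii) is proved by essentially the same truncated inverted Clifford torus construction (the paper selects the inversion point $q$ with $K(q)>0$, which is what actually guarantees convexity of the level curves; your appeal to residual reflection symmetries does not by itself give convexity, though this is a minor repair). The genuine gap is in the central step of (i) and (iii): you extract the lower bound $\be_{\cg_j}-4\pi$ at each bad point by blowing up $\Si_n$ and applying Theorem \ref{thmr} (or the Bauer--Kuwert estimate) to the blow-up \emph{limit}. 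But that limit is a priori only a varifold: the Willmore energy is merely lower semicontinuous along the blow-up, the genus can drop again because topology may re-concentrate at secondary scales of the rescaled sequence, and nothing guarantees that the limit lies in the smooth class $\cA$ to which Theorem \ref{thmr} applies. You flag this yourself as ``the main obstacle'', but it is precisely the step on which the whole lower bound rests, and the proposed multi-scale iteration is not sketched in any checkable form.

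The paper avoids taking a blow-up limit altogether (Lemmas \ref{xiinterno} and \ref{xibordo}): for each fixed $n$ it retains the piece of $\Si_n$ near the bad point that carries the topology (using \eqref{diffeom} and the annular graphical decomposition to select a circle on which $\Si_n$ is a graph with controlled $\int |\nabla^2 u_n|^2$), and glues to it the graph of a biharmonic function matching $u_n,\nabla u_n$ on that circle and vanishing outside a large ball; by the biharmonic comparison lemma and \eqref{debolistarconv} the glued graph's energy tends to $0$ because the limit varifold is flat. The output is, for every $n$, an honest asymptotically flat competitor of genus $1$ --- with one end if $\xi$ is interior, or (after flattening $\Ga$ near $\xi$ by a $C^2$-small diffeomorphism and reflecting across the resulting line) with a straight line as boundary if $\xi\in\Ga$ --- so the infimum statements are applied to genuine members of the relevant classes and yield $\cW(\Si_n)\ge\be_1-4\pi-o(1)$ with no energy bookkeeping across scales. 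If you insist on the blow-up formulation you must supply the quantization/no-loss analysis; the gluing route is what actually closes the argument in the paper.
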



\begin{remark} \label{remg1}
	In the assumptions of Theorem \ref{thmmain}, denote by $\Si_n\in\cC^*(\Si_1)$ a minimizing sequence for problem \eqref{problemmin} with $\cg=1$, let $\Si$ be a varifold limit given by Proposition \ref{simon}, and call $D$ the planar region enclosed by $\Ga$. If problem \eqref{problemmin} has no solution, by Proposition \ref{simon} the limit varifold $\Si$ is induced by a branched immersion of the disk and minimizes the problem for genus $\cg'=0$, hence $\Si=D$. Also by \eqref{diffeom} there exists a unique bad point $\xi$ such that it absorbs the topology of $\Si_n$, more precisely
	\begin{equation} \label{topg1}
	\exists\si_n\searrow 0 :\quad D\sm B_{\si_n}(\xi)\simeq \Si_n \sm  B_{\si_n}(\xi).
	\end{equation}
\end{remark}

\medskip

\noindent In the next lemmas we find lower bounds that will prove point $i)$ of Theorem \ref{thmmain}. To this aim we will always assume that we are in the setting of Remark \ref{remg1} and we will divide the cases of bad point $\xi$ lying on the boundary $\Ga$ or not. We first examine the case in which $\xi$ is assumed to be in the interior of $D$. The strategy goes as follows: we first isolate the bad point and the topology of the minimizing sequence as stated in Remark \ref{remg1}, also by identifying an annular region such that the surface around the bad point can be written as a graph of functions converging to zero in $C^1$, and then we substitute the remaining part of the sequence with graphs of biharmonic functions converging to zero in $C^2$. In the case of $\xi\in \Ga$ the strategy will be exactly the same, except that we will also need to take into account some technicalities related to the presence of the boundary.

\begin{lemma}\label{xiinterno}
	Let $\Ga$ be a closed compact smooth simple planar curve and denote by $\Si_n\in\cC^*(\Si_1)$ a minimizing sequence for problem \eqref{problemmin} with $\cg=1$, let $\Si$ be a varifold limit given by Proposition \ref{simon}, and call $D$ the planar region enclosed by $\Ga$. Suppose that there is no solution to the minimization problem \eqref{problemmin} in the case of genus $\cg=1$ and that $\inf_{\cC(\Si_\cg)}\cW= \inf_{\cC_{imm}(\Si_\cg)} \cW$, then call $\xi$ and $\si_n$ the bad point and the sequence of Remark \ref{remg1}.\\
	If the bad point $\xi$ lies in the interior of $D$, then the infimum of problem \eqref{problemmin} is equal to $\be_1-4\pi=2\pi^2-4\pi$.
\end{lemma}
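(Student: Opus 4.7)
I would prove separately the two inequalities $\inf_{\cC(\Si_1)}\cW\le \be_1-4\pi$ and $\inf_{\cC(\Si_1)}\cW\ge \be_1-4\pi$. The upper bound is the construction already sketched in the Introduction (the second remark after Theorem \ref{1}): starting from an embedded asymptotically flat genus $1$ surface of optimal Willmore energy $e_1=\be_1-4\pi$, one rescales it, places a small copy inside $D$ away from $\Ga$, and interpolates graphically against $D$ on a thin annulus to produce a competitor in $\cC(\Si_1)$ of energy $\le e_1+\ep$ for any prescribed $\ep>0$.

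For the lower bound I would exploit Remark \ref{remg1}: since problem \eqref{problemmin} has no minimizer and $\cg=1$, the varifold limit $\Si$ coincides with the flat disk $D$ with multiplicity one, and there is a single bad point $\xi$, which by hypothesis lies in the interior of $D$. Fix $s_0>0$ with $\overline{B_{s_0}(\xi)}\subset \mathrm{int}(D)$ and assume $\si_n<s_0/4$ for all large $n$. By the varifold convergence $\Si_n\to D$ and the annular graphical decomposition used in the proof of Proposition \ref{simon} (cf.\ \eqref{annular1}--\eqref{annular2}), one can choose, for each such $n$, a radius $s_n\in(s_0/2,3s_0/4)$ such that $\Si_n\cap (B_{s_0}(\xi)\setminus B_{s_n}(\xi))$ is the graph of a single smooth function $u_n$ over a planar annular domain in $\{z=0\}$, with $\|u_n\|_{C^1}\to 0$ as $n\to\infty$. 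Uniqueness of the sheet follows from the multiplicity-one convergence $\Si_n\to D$, while the fact that $\Si_n\cap B_{s_n}(\xi)$ carries the full genus of $\Si_n$ is a consequence of \eqref{topg1} together with $\si_n<s_n$.

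Next I would apply a biharmonic interpolation in the spirit of \cite{SiEX}: let $v_n$ solve $\De^2 v_n=0$ on the planar annulus $A_n=\{z=0\}\cap(B_{s_0}(\xi)\setminus B_{s_n}(\xi))$, with Dirichlet--Neumann data $v_n=u_n$, $\pa_\nu v_n=\pa_\nu u_n$ on the inner circle and $v_n=0$, $\pa_\nu v_n=0$ on the outer one. Standard fourth-order elliptic estimates uniform on annuli of bounded radius ratio give $\|v_n\|_{C^2(A_n)}\to 0$, whence the graph of $v_n$ is embedded and $\cW(\mathrm{graph}(v_n))\to 0$. I then define the surgered surface
\begin{equation*}
\tilde\Si_n:=\big(\Si_n\cap B_{s_n}(\xi)\big)\cup\mathrm{graph}(v_n)\cup\big(\{z=0\}\setminus B_{s_0}(\xi)\big),
\end{equation*}
which by construction is embedded, of class $C^{1,1}$ with smooth matching along the two interface circles, and is an asymptotically flat genus $1$ surface with one end in the sense of the Introduction (asymptotic plane $\{z=0\}$, with $f_R\equiv 0$ outside $B_{s_0}(\xi)$).

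Finally I invoke the Willmore lower bound for asymptotically flat genus $1$ surfaces recalled in the first remark after Theorem \ref{1}, namely $\cW(\tilde\Si_n)\ge e_1=\be_1-4\pi$; this rests on the \cite{BaKu} inversion formula relating $\tilde\Si_n$ to a closed genus $1$ surface via compactification, combined with the closed inequality $\cW\ge \be_1$. Assembling the estimates,
\begin{equation*}
\cW(\Si_n)\ge \cW(\Si_n\cap B_{s_n}(\xi))=\cW(\tilde\Si_n)-\cW(\mathrm{graph}(v_n))\ge \be_1-4\pi-o_n(1),
\end{equation*}
and letting $n\to\infty$ yields the required lower bound. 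The main technical obstacle I anticipate is uniform-in-$n$ control of both the annular graphical decomposition and the biharmonic interpolation: one must select $s_n$ so as to avoid the bad radii in Simon's annular lemma, check that the graphical representation there consists of a single smooth sheet thanks to the multiplicity-one limit $\Si=D$, and turn the $C^1$-smallness of $u_n$ into the quantitative $C^2$-smallness of $v_n$ needed to make $\cW(\mathrm{graph}(v_n))\to 0$. All these steps follow the graphical decomposition and biharmonic comparison scheme already used in the proof of Proposition \ref{simon}.
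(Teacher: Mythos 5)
Your proposal is correct and follows essentially the same route as the paper's proof: isolate the bad point via the annular graphical decomposition, glue in a biharmonic graph matching $\Si_n$ along an inner circle and the flat plane outside, and invoke the lower bound $e_1$ for asymptotically flat genus-$1$ surfaces (the only cosmetic difference being that the paper takes the outer radius of the biharmonic annulus large enough to contain $\Ga$ rather than staying inside $D$). The one point you flag as the main obstacle is indeed where the paper works: the needed bound $\cW(\mathrm{graph}(v_n))\to0$ does not follow from $C^1$-smallness of $u_n$ alone, but from choosing the gluing radius by an averaging argument so that $\|\nabla^2 u_n\|_{L^2}$ on the circle is controlled by $\int_{T}|\sff_n|^2$ over a fixed tubular neighborhood, which tends to zero by the weak-$\star$ convergence $|\sff_n|^2\cH^2\res\Si_n\overset{\star}{\rightharpoonup}|\sff_D|^2\mu_D=0$ of \eqref{debolistarconv}.
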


\begin{proof}
	By \eqref{annular1}, \eqref{annular2}, by the convergence in Hausdorff distance, and by the Graph Decomposition Lemma (Appendix B), we can fix $\si_0>0$ such that $B_{2\si_0}(\xi)\cap \Ga=\emptyset$ and such that for any $\si\in\cS\con(0,\si_0)$ we have an annular region $A^\si=\{ x\in D| \si/4<|x-\xi|<3\si/4 \}$ and functions $u_n^\si:A^\si\sm \cup_k d_{nk}^\si \to D^\perp$ with
	\begin{equation*}
	\begin{split}
		& \sup_{dom(u_n^\si)} \frac{|u_n^\si|}{\si}+|\nabla u_n^\si|\le C\ep^{1/6},\\
		&(\Si_n\cap \cA^\si)^* = graph(u_n^\si)\cup_k P_{kn}^\si,\\
		& \pi_{D}(P_{kn}^\si)=d_{kn}^\si, \quad \sum_k diam(P_{kn}^\si)\le C\ep^{1/6}\si,\\
		&  d_{kn}^\si\cap \pa dom(u_n^\si)=\emptyset,\\
		& \cL^1(\cS(\si_0))\ge \si_0-c\ep\si_0,
	\end{split}
	\end{equation*}
	where $\cA^\si=\{ x+z| x\in A^\si,z\in D^\perp, |z|<\si/4 \}$ and in $(\Si_n\cap \cA^\si)^*$ the star denotes the selection of the connected component of $(\Si_n\cap \cA^\si)$ such that
	\begin{equation}\label{eq:Homeomorphism}
	(\Si_n\cap \cA^\si)^*\cup K \cong \Si_n\sm\Ga,
	\end{equation}
	where $K$ is the unique connected component of $\Si_n\cap B_{\frac{\si}{4}}(\xi)$ satisfying the homeomorphism in \eqref{eq:Homeomorphism} (roughly speaking, we are selecting the right annulus of $\Si_n$ around the ball accumulating the topology).\\
	From now on let $\si\in \cS(\si_0)\cap (\si_0/2,\si_0)$ be fixed (which exists for $\ep$ small enough); by the properties above we can fix $\bar{\si}\in\bar{S}(\si)\con (\si/4,3\si/4)$ with $\cL^1(\bar{S}(\si))\ge \si/4$ such that
	\begin{equation*}
	\pa B_{\bar{\si}}(\xi)\cap D\con dom(u_n^\si) \quad\mbox{for infinitely many }n,
	\end{equation*}
	and
	\begin{equation*}
	\int_{graph(u_n^\si|_{\pa B^{\R^2}_{\bar{\si}}(\xi)})} |\sff_n|^2 \le \frac{2}{\cL^1(\bar{S}(\si))}\int_{graph (u_n^\si)} |\sff_n|^2\le \frac{8}{\si}\int_{\Si_n\cap T_{\bar{\si}}} |\sff_n|^2 \quad\mbox{for infinitely many }n,
	\end{equation*}
	where $T_{\bar{\si}}$ is a fixed tubolar neighborhood of $graph(u_n^\si|_{\pa B^{\R^2}_{\bar{\si}}(\xi)})$ independent of $n$ such that no bad points belong to $\overline{T_{\bar{\si}}}$.\\
	By the biharmonic comparison Lemma 2.2 in \cite{SiEX} (Appendix B) and the minimality assumption on the sequence, we can estimate
	\begin{equation} \label{reff4}
	\begin{split}
		\int_{\pa B^{\R^2}_{\bar{\si}}(\xi)} |\nabla^2 u^\si_n|^2 \le C(\ep)\int_{graph(u_n^\si|_{\pa B^{\R^2}_{\bar{\si}}(\xi)})}|\sff_n|^2\le C(\ep)\frac{8}{\si} \int_{\Si_n\cap T_{\bar{\si}}} |\sff_n|^2,
	\end{split}
	\end{equation}
	and for any $x,y\in \pa B^{\R^2}_{\bar{\si}}(\xi)$, denoting by $\ga(x,y)$ the shortest arc in $\pa B^{\R^2}_{\bar{\si}}(\xi)$ from $x$ to $y$ and by $T$ a tangent unit vector field to $\pa B^{\R^2}_{\bar{\si}}(\xi)$ we have
	\begin{equation*}
	\begin{split}
		|\nabla u^\si_n(y)-\nabla u^\si_n(x)|&\le \int_{\ga(x,y)} \bigg|\frac{\pa}{\pa T} \nabla u^\si_n  \bigg|\,d\cH^1 \le \int_{\ga(x,y)} |\nabla^2 u^\si_n| \,d\cH^1 \le\\
		&\le \bigg(\int_{\pa B^{\R^2}_{\bar{\si}}(\xi)} |\nabla^2 u^\si_n|^2\bigg)^{1/2} dist_{\pa B^{\R^2}_{\bar{\si}}(\xi)}(x,y)^{1/2}.
	\end{split}
	\end{equation*}
	Being also $|\nabla u^\si_n|\le C\ep^{1/6}$, up to subsequence by Ascoli-Arzel\`{a} we have
	\begin{equation*}
	u^\si_n \xrightarrow[n]{}0,\,\,\nabla u^\si_n \xrightarrow[n]{}0 \qquad \mbox{uniformly on } \pa B^{\R^2}_{\bar{\si}}(\xi).
	\end{equation*}
	Now let $R=A\bar{\si}>0$ such that $\Ga\con B_{R/2}(\xi)$, and call $B:=B^{\R^2}_{R}(\xi)\sm  B^{\R^2}_{\bar{\si}}(\xi)$. There exist well defined functions $w_n\in C^\infty(\overline{B})$ such that
	\begin{equation*}
	\begin{cases}
		\De^2w_n=0 & \mbox{on } B,\\
		w_n=u^\si_n & \mbox{on } \pa B^{\R^2}_{\bar{\si}}(\xi),\\
		\nabla w_n=\nabla u^\si_n  & \mbox{on } \pa B^{\R^2}_{\bar{\si}}(\xi),\\
		w_n=0   & \mbox{on } \pa B^{\R^2}_{R}(\xi) ,\\
		\nabla w_n=0 & \mbox{on } \pa B^{\R^2}_{R}(\xi).\\
	\end{cases}
	\end{equation*}
	It is readily checked that $w_n$ minimizes $\int_B |\nabla^2 v|^2$ among all $v$ with the same boundary data. Passing to the infimum on such $v$ in the inequality $\int_B  |\nabla^2 w_n|^2\le \int_B  |\nabla v|^2 + \int_B  |\nabla^2 v|^2$ we get
	\begin{equation*}
	\begin{split}
		\int_B  |\nabla^2 w_n|^2\le \| (\nabla u^\si_n)|_{\pa B^{\R^2}_{\bar{\si}}(\xi)}\|^2_{H^{1/2}(\pa B^{\R^2}_{\bar{\si}}(\xi))}\le \| u^\si_n\|_{H^1(\pa B^{\R^2}_{\bar{\si}}(\xi))}^2+ \| \nabla u^\si_n\|^2_{L^2(\pa B^{\R^2}_{\bar{\si}}(\xi))}+\|\nabla^2 u^\si_n\|_{L^2(\pa B^{\R^2}_{\bar{\si}}(\xi))}^2.
	\end{split}
	\end{equation*}
	Applying the same inequalities to the function $w_n-l_n$ for suitable affine functions $l_n$, we estimate
	\begin{equation*}
	\begin{split}
		\int_B  |\nabla^2 w_n|^2\le C(B) \|\nabla^2  u^\si_n\|_{L^2(\pa B^{\R^2}_{\bar{\si}}(\xi))}^2= \bar{\si}C^* \|\nabla^2  u^\si_n\|_{L^2(\pa B^{\R^2}_{\bar{\si}}(\xi))}^2,
	\end{split}
	\end{equation*}	
	where $C^*$ is a universal positive constant. Hence by \eqref{reff4} we obtain
	\begin{equation*}
	\int_B  |\nabla^2 w_n|^2\	\le \bar{\si}C^* C(\ep)\frac{8}{\si} \int_{\Si_n\cap T_{\bar{\si}}} |\sff_n|^2 \le 6C^* C(\ep) \int_{\Si_n\cap T_{\bar{\si}}} |\sff_n|^2,
	\end{equation*}
	then by \eqref{debolistarconv} we get
	\begin{equation} \label{reff5}
	\begin{split}
		\limsup_n \int_B |\nabla^2 w_n|^2 &\le 6C^* C(\ep)\limsup_n \bigg(|\sff_n|^2\cH^2\res\Si_n \bigg)\big(\overline{T_{\bar{\si}}} \big)\le\\
		&\le 6C^* C(\ep) \bigg(|\sff_D|^2\cH^2\res D \bigg)\big(\overline{T_{\bar{\si}}} \big) =0.
	\end{split}
	\end{equation}
	By Remark \ref{remg1} we conclude that the $C^{1,1}$ surface $\tilde{\Si}_n$ given by extending $w_n$ to zero on $(B^{\R^2}_R(0))^c$ and then gluing $graph(w_n)$ with $\Si_n$ at the curve $u^\si_n(\pa B^{\R^2}_{\bar{\si}}(\xi))$ is an asymptotically flat surface of genus $1$; hence its Willmore energy is no greater or equal than $2\pi^2-4\pi$. By \eqref{reff5} we conclude that
	\begin{equation} \label{ref43}
	2\pi^2-4\pi\le \liminf_n \cW(\tilde{\Si}_n)\le \liminf_n \cW(\Si_n) + \cW(graph(w_n))= \lim_n \cW(\Si_n)\le 2\pi^2-4\pi.
	\end{equation}
\end{proof}

\begin{remark}
	Let us mention that it is possible to give a better description of the behavior of the bad point $\xi$ in the interior of $D$ as follows. In the assumptions and notation of Lemma \ref{xiinterno} and of its proof, there exists a sequence of blow ups $A_n(\tilde{\Si}_n-\xi)$ with $A_n\to\infty$ that converges (up to subsequences) to $P$ up to subsequence in the sense of varifolds, where $P$ is an asymptotically flat surface of genus $1$ with one end with minimal energy equal to $e_1$. This easily follows by choosing $A_n$ suitably and by the fact that the Willmore energy is scaling invariant and $\cW(\tilde{\Si}_n)\to e_1$.
\end{remark}

\noindent Now we consider the case in which $\xi$ lies on the boundary curve. In complete analogy with the proof of Lemma \ref{xiinterno}, this time we will compare the energy of the minimizing sequence with the energy of an asymptotically flat surface of genus $1$ having a straight line as boundary, and thus exploit the results in Subsection \ref{sub2}. To this aim, we will first need to flatten the curve $\Ga$ around the bad point, and this will be the output of the technical part of the next lemma; the rest of the argument is then analogous to the strategy employed in the case of $\xi$ belonging to the interior of $D$.

\begin{lemma}\label{xibordo}
	Let $\Ga$ be a closed compact smooth simple planar curve and denote by $\Si_n\in\cC^*(\Si_1)$ a minimizing sequence for problem \eqref{problemmin} with $\cg=1$, let $\Si$ be a varifold limit given by Proposition \ref{simon}, and call $D$ the planar region enclosed by $\Ga$. Suppose that there is no solution to the minimization problem \eqref{problemmin} in the case of genus $\cg=1$ and that $\inf_{\cC(\Si_\cg)}\cW= \inf_{\cC_{imm}(\Si_\cg)} \cW$, then call $\xi$ and $\si_n$ the bad point and the sequence of Remark \ref{remg1}.\\
	If there is no solution to the minimization problem \eqref{problemmin} in the case of genus $\cg=1$, if the bad point $\xi$ lies on the boundary $\Ga$, then the infimum of problem \eqref{problemmin} is equal to $\be_1-4\pi=2\pi^2-4\pi$.
\end{lemma}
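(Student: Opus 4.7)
The plan is to mirror the strategy of Lemma \ref{xiinterno} essentially verbatim, but with two modifications dictated by the fact that now $\xi \in \Ga$: first, the boundary curve $\Ga$ must be locally flattened to a straight line so that the comparison surface constructed at the end is an asymptotically flat surface of genus $1$ with a straight line as boundary, to which Theorem \ref{thmr} can be applied to give the lower bound $\be_1 - 4\pi$; second, the graphical decomposition and the biharmonic comparison near $\xi$ must be carried out in a half-plane rather than in the full plane.

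Concretely, I would proceed as follows. Fix $\eta>0$ and choose $\si_0=\si_0(\eta)>0$ together with a smooth diffeomorphism $F:\R^3\to\R^3$, supported in $B_{2\si_0}(\xi)$, such that $F(\Ga\cap B_{\si_0}(\xi))$ lies on an affine line $r$, $F(D\cap B_{\si_0}(\xi))$ lies in a half-plane $\Pi\subset\{z=0\}$ bounded by $r$, and $\|F-\mathrm{id}\|_{C^3}\le\eta$. Since $F\to\mathrm{id}$ in $C^3$ and the $\Si_n$ have $\sup_n D(\Si_n)<+\infty$ and uniformly bounded area by Proposition \ref{simon}, the quantitative transformation of curvature under ambient diffeomorphisms gives $|\cW(F(\Si_n))-\cW(\Si_n)|\le C\eta$ uniformly in $n$. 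I would then apply the boundary-adapted Graph Decomposition Lemma used in the proof of Proposition \ref{simon} at the point $F(\xi)\in r$, with reference half-plane $\Pi$, to obtain for $\si$ in a set of almost full measure an annular region $A^\si\subset \Pi$ and graphs $u_n^\si:A^\si\sm\bigcup_k d_{nk}^\si\to\Pi^\perp$ with $\sup (|u_n^\si|/\si+|\nabla u_n^\si|)\le C\eta^{1/6}$. A radial mean-value argument then produces a good radius $\bar\si\in(\si/4,3\si/4)$ for which $\pa B_{\bar\si}(F(\xi))\cap \Pi\subset dom(u_n^\si)$ and $\int_{graph(u_n^\si|_{\pa B_{\bar\si}(F(\xi))\cap\Pi})}|\sff_n|^2\le\frac{C}{\si}\int_{F(\Si_n)\cap T_{\bar\si}}|\sff_n|^2$, whence the biharmonic comparison Lemma 2.2 of \cite{SiEX} combined with Ascoli--Arzel\`a yields $u_n^\si,\nabla u_n^\si\to 0$ uniformly on $\pa B_{\bar\si}(F(\xi))\cap \Pi$.

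Next I would solve, on the half-annulus $B=(B_R^{\R^2}(F(\xi))\sm B_{\bar\si}^{\R^2}(F(\xi)))\cap\Pi$ with $R$ chosen so that $F(\Ga)\subset B_{R/2}(F(\xi))$, the biharmonic equation $\De^2 w_n=0$ with Navier-type data $w_n=u_n^\si$, $\nabla w_n=\nabla u_n^\si$ on the inner arc, $w_n=|\nabla w_n|=0$ on the outer arc, and zero trace along the two straight portions of $\pa B$ contained in $r$ (so that $\mathrm{graph}(w_n)\subset \R^3$ has its boundary on $r$). The same minimality estimate used in Lemma \ref{xiinterno} together with \eqref{debolistarconv} for $F(\Si_n)$ gives $\limsup_n\int_B|\nabla^2 w_n|^2=0$. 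Gluing $\mathrm{graph}(w_n)$, extended by $\Pi$ outside $B_R(F(\xi))$, with $F(\Si_n)\cap B_{\bar\si}(F(\xi))$ along the curve $\mathrm{graph}(u_n^\si|_{\pa B_{\bar\si}(F(\xi))\cap\Pi})$ produces a $C^{1,1}$ asymptotically flat surface $\widetilde\Si_n$ of genus $1$ with boundary the line $r$; Theorem \ref{thmr} then forces $\cW(\widetilde\Si_n)\ge\be_1-4\pi$. Combining with the Willmore-energy control from the flattening, $\be_1-4\pi\le\liminf_n\cW(\widetilde\Si_n)\le\lim_n\cW(\Si_n)+C\eta$, and letting $\eta\to 0$ together with the reverse inequality from the Introduction closes the argument.

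The hard part will be the boundary-adapted versions of the graphical decomposition and biharmonic comparison in Step 2. These were already invoked in the proof of Proposition \ref{simon} to handle good boundary points, but here one needs uniform $C^1$ convergence of the graph on an arc whose endpoints lie on $r$ itself, and the biharmonic comparison on the half-disc must respect the half-plane constraint. A subtler but routine point is the quantitative control $|\cW(F(\Si_n))-\cW(\Si_n)|\le C\eta$: it requires expressing $\cW$ of an immersion in terms of the ambient metric $F^*g_{\R^3}$ and the second fundamental form and invoking the uniform bounds $\sup_n(D(\Si_n)+\mathcal H^2(\Si_n))<+\infty$, so that the $O(\|F-\mathrm{id}\|_{C^3})$ perturbation of the integrand integrates to an $O(\eta)$ perturbation of the energy.
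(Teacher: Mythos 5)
Your overall architecture coincides with the paper's: isolate the bad point, straighten $\Ga$ to a line near $\xi$, replace the far portion of the surface by a biharmonic graph of vanishing energy, glue to obtain an asymptotically flat genus-$1$ surface with a straight line as boundary, and invoke Theorem \ref{thmr}. The gap is in the straightening step. You ask for a diffeomorphism $F$ supported in $B_{2\si_0}(\xi)$ mapping $\Ga\cap B_{\si_0}(\xi)$ onto a line segment with $\|F-\mathrm{id}\|_{C^3}\le\eta$ for arbitrary $\eta>0$. This is impossible whenever $\Ga$ has nonzero curvature at $\xi$: writing $\Ga$ locally as a graph $y=\ga(x)$ with $\ga''(0)=k_0\neq0$, any map sending this arc to a straight segment must have second differential of size at least comparable to $k_0$ somewhere on the arc, no matter how small $\si_0$ is (shrinking $\si_0$ makes $\ga$ and $\ga'$ small on the support but leaves $\ga''$ of order $k_0$, and the cutoff derivatives only make things worse). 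Since the Willmore integrand transforms under $F$ through $d^2F$, your claimed estimate $|\cW(F(\Si_n))-\cW(\Si_n)|\le C\eta$, justified only via the global bounds $\sup_n(D(\Si_n)+\cH^2(\Si_n))<+\infty$, does not follow. The paper circumvents exactly this obstruction by blowing up first: it rescales by $1/r_n$ with $r_n\searrow0$ and straightens the rescaled curve with $f_n(p)=p-(0,\tfrac{1}{r_n}\Ga_{r_n}(r_n\pi_1(p)),0)$, whose second differential carries an explicit factor $r_n$, so that $\|f_n-\mathrm{id}\|_{C^2(\R^3)}\to0$ genuinely and the energy error $\eta_n$ in \eqref{eta} tends to zero. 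Your fixed-scale version could be salvaged, but only by a different argument than the one you give: one must localize the energy-change estimate to $B_{2\si_0}(\xi)$ and use the monotonicity bound $|\Si_n\cap B_\si(\xi)|\le M\si^2$, so that the terms involving $\|d^2F\|_\infty\sim k_0$ are multiplied by $|\Si_n\cap B_{2\si_0}|^{1/2}=O(\si_0)$ and the term involving $\|dF-\mathrm{id}\|_\infty\sim k_0\si_0$ multiplies the (bounded, not small) quantity $D(\Si_n\cap B_{2\si_0})$; this localized bookkeeping is essential and absent from your proposal.

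A secondary point: you solve the biharmonic problem directly on the half-annulus $(B_R\sm B_{\bar\si})\cap\Pi$, whose boundary has two corners where the inner arc meets the line $r$. The paper first applies the correction $F_n$ so that the inner arc meets $\{y=0\}$ orthogonally and then extends the boundary data by odd reflection, precisely so that the comparison graph is controlled up to these points and the glued surface is $C^{1,1}$ there with boundary on the line. Without the reflection (or an equivalent corner analysis, including the compatibility $u_n^\si=0$ and the tangential matching of $\nabla u_n^\si$ at the two corner points), the regularity of your glued comparison surface at the junction of the inner arc with $r$ is not justified, and Theorem \ref{thmr} is being applied to a surface whose admissibility in the class $\cA$ has not been verified.
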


\begin{proof}
	\noindent Applying the same arguments and estimates of the proof of Lemma \ref{xiinterno} on a sequence of radii $(\si_0)_n\searrow0$ in place of $\si_0$, we deduce, up to passing to a diagonal sequence, the existence of radii $r_n\searrow0$ and functions $u_n:\Om_n\con D\to D^\perp$ such that
	\begin{equation} \label{inizio'}
	\begin{split}
		&\pa B_{r_n}^{\R^2}(\xi)\cap D \con \Om_n,\\
		& graph(u_n)\con \Si_n,\\
		& \Si_n\sm ( graph(u_n|_{\pa B_{r_n}^{\R^2}(\xi)\cap D })\cup \Ga) \simeq D\sqcup \Si_n\sm\Ga,\\
		& \int_{\pa B_{r_n}^{\R^2}(\xi)\cap D } |\nabla^2 u_n|^2 \le \frac{8}{\tilde{r}_n} \int_{\Si_n\cap T_{r_n}} |\sff_n|^2 \le \frac{1}{n},\\
		& |u_n|,|\nabla u_n|\le r_n^2 \quad\mbox{ on 
		} \pa B_{r_n}^{\R^2}(\xi)\cap D ,\\
		&\sup_{\Om_n} |\nabla u_n|\le C\ep^{1/6},
	\end{split}
	\end{equation}
	where each $T_{r_n}$ is a closed tubolar neighborhood of $graph(u_n|_{\pa B_{r_n}^{\R^2}(\xi)\cap D})$ not containing bad points and $\tilde{r}_n$ is a suitable radius (here $\tilde{r}_n,r_n$ respectively do the job of $\si,\bar{\si}$ in the proof of Lemma \ref{xiinterno}).\\
	Up to translation and rotation assume $\xi=0$, $\Ga\con \{z=0 \}$, $\{x=0\}$ is tangent to $\Ga$ at $\xi=0$ and $(0,-1,0)$ points outside of $D$. In this way for $\ro\le\ro_0(\Ga)\le1$ let $\Ga_\ro:I_\ro\con\R\to\R$ be the function such that $\Ga\cap B_\ro(0)=(graph(\Ga_\ro),0)$. Also for any $\ro\in(0,\ro_0(\Ga))$ extend $\Ga_\ro$ on the whole line arbitrarily but assuming that
	\begin{equation} \label{ref34}
	\begin{split}
		&\Ga_\ro:\R\to\R, \quad\Ga_\ro\in C^\infty_c(-10,10), \\
		& \Ga\cap B_\ro(0)=(graph(\Ga_\ro),0)\cap B_\ro(0),\\
		& \sup_\R |\Ga_\ro'|\le \sup_{(-\ro,\ro)} |\Ga_\ro'|,\\
		& \sup_\R |\Ga_\ro''|\le \sup_{(-\ro,\ro)} |\Ga_\ro''|.
	\end{split}
	\end{equation}
	Now for indexes $n$ and radii $r_n\le\ro_0/2$ as in \eqref{inizio'}, let $\tilde{\Si}_n:=\frac{1}{r_n}\Si_n$ and let $f_{n}$ be the diffeomorphism
	\begin{equation*}
	f_{n}:\R^3\to\R^3,\quad f_{n}(p)= p -\bigg( 0, \frac{1}{r_n}\Ga_{r_n}(r_n\pi_1(p)),0 \bigg),
	\end{equation*}
	where $\pi_1(p)=x$ if $p=(x,y,z)$. Then call $\Si'_n:=f_n(\tilde{\Si}_n)$. In this way we have deformed a neighborhood of $0$ in $\Ga$ into a segment; more precisely: $\frac{1}{r_n}(I_{r_n},0,0)\con\pa \Si'_n$. Since
	\begin{equation*}
	d(f_{n})_p=\begin{pmatrix}
		1 & 0 & 0\\
		-\Ga_{r_n}'\big|_{r_n\pi_1(p)} & 1 & 0 \\
		0 & 0 & 1\\
	\end{pmatrix}, \qquad \pa_i\pa_j(f_{n})(p)=-r_n \de_i^x\de_j^x \begin{pmatrix}
		0\\ \Ga_{r_n}''\big|_{r_n\pi_1(p)} \\ 0
	\end{pmatrix},
	\end{equation*}
	and $\Ga_\ro'\xrightarrow[\ro\to0]{}0$ uniformly on $\R$ and $|\Ga_\ro''|\le k_0$ with $k_0$ depending only on the curvature of $\Ga$, we then get that 
	\begin{equation} \label{convergenze}
	\begin{split}
		& \sup_{p\in\R^3} |f_n(p)-p|\le C(k_0)r_n \xrightarrow[n\to\infty]{}0,\\
		&\sup_{p\in\R^3} \bigg\|d(f_{n})_p - id\big|_{\R^3} \bigg\| \xrightarrow[n \to\infty]{}0,\\
		&\sup_{p\in\R^3} \| d^2 f_n (p)\| \equiv \sup_{p\in\R^3} \bigg(\sum_{i,j,k} | \pa_i\pa_j (f^k_n)(p) |^2\bigg)^{1/2}   \xrightarrow[n \to\infty]{}0.\\
	\end{split}
	\end{equation}
	Then $\|f_n-id|_{\R^3}\|_{C^2(\R^3)}\xrightarrow[n]{}0$. Hence we can write
	\begin{equation} \label{eta}
	\forall T>0\,\, \exists |\eta_n|\searrow0 : \qquad \cW(\Si_n)\ge \cW(\tilde{\Si}_n\cap B_T(0))=\cW(\Si'_n\cap B_T(0))+\eta_n.
	\end{equation}
	\noindent From now on call $I'_n:=\frac{1}{r_n}(I_{r_n},0,0)\con\pa\Si'_n$, $\ga'_n:=f_n\big(\frac{1}{r_n} (\pa B_{r_n}^{\R^2}(0)\cap D)\big)$ and $u'_n:\Om'_n:=f_n\big(\frac{1}{r_n}(\Om_n)\big)\to\R$ the functions parametrizing $\Si'_n$ corresponding to the functions $u_n$ in \eqref{inizio'}, that is
	\begin{equation*}
	u'_n((x,y))=\frac{1}{r_n}u_n\bigg( r_n(x,y)+(0,\Ga_{r_n}(r_nx))  \bigg)=\frac{1}{r_n}u_n\bigg( r_nx,r_ny+\Ga_{r_n}(r_nx)  \bigg).
	\end{equation*}
	By \eqref{inizio'} we have
	\begin{equation} \label{ref35}
	\begin{split}
		&(i)\quad\ga'_n \con \Om'_n,\\
		& (ii)\quad\Si'_n\sm ( graph(u'_n|_{\ga'_n})\cup I'_n) \simeq D\sqcup \Si_n\sm\Ga,\\
		&(iii)\quad |u'_n|\le r_n \quad\mbox{ on 
		} \ga'_n ,
	\end{split}
\end{equation}
\begin{equation}\label{ref35'}
\begin{split}
		&|\pa_i u'_n|(p)=\bigg|\de_i^x\pa_1 u_n\big|_{(r_nx,r_ny+\Ga_{r_n}(r_nx))} + (\de^y_i +\de^x_i \Ga'_{r_n}\big|_{r_nx} ) \pa_2 u_n\big|_{(r_nx,r_ny+\Ga_{r_n}(r_nx))}\bigg|\le\\&\qquad\qquad \le  2\big(1+ \sup |\Ga'_{r_n}|\big)r_n^2\le C_1 r_n^2 \qquad\qquad\mbox{ on 
		} \ga'_n ,
	\end{split}
	\end{equation}
	and since
	\begin{equation*}
	\begin{split}
		 |\pa_j\pa_i u'_n|^2((x,y))&=\big| \de_i^x\big[ \de^x_j r_n (\pa_{11}u_n)\big|_{(r_nx,r_ny+\Ga_{r_n}(r_nx))}+ \\ & \qquad +r_n(\de^y_j+\de^x_j\Ga'_{r_n}\big|_{r_nx})(\pa_{21}u_n)\big|_{(r_nx,r_ny+\Ga_{r_n}(r_nx))} \big]+\\&\qquad+r_n(\de^x_i\de^x_j\Ga_{r_n}''\big|_{r_nx})(\pa_2u_n) + (\de^y_i +\de^x_i \Ga'_{r_n}\big|_{r_nx} )\big[ \de^x_jr_n(\pa_{12}u_n)\big|_{(r_nx,r_ny+\Ga_{r_n}(r_nx))} +\\&\qquad+ r_n(\de^y_i +\de^x_i \Ga'_{r_n}\big|_{r_nx} )(\pa_{22}u_n)\big|_{(r_nx,r_ny+\Ga_{r_n}(r_nx))}  \big] \big|^2 \le \\
		&\le 2(k_0C\ep^{1/6}r_n)^2 +4 r_n^2 (1+\sup |\Ga'_{r_n}|)^4|\nabla^2 u_n|^2 \big|_{(r_nx,r_ny+\Ga_{r_n}(r_nx))} ,
	\end{split}
	\end{equation*}
	then
\begin{equation}\label{ref35''}
\begin{split}
		&\int_{\ga'_n } |\nabla^2 u'_n|^2 ((x,y))\,d\cH^1((x,y))\le \\ &\le 4\int_{\pa B^{\R^2}_{r_n}(0)\cap D } 2(k_0C\ep^{1/6}r_n)^2 +4 r_n^2 (1+\sup |\Ga'_{r_n}|)^4|\nabla^2 u_n|^2(\bar{x},\bar{y}) Jac_{\pa B^{\R^2}_{r_n}(0)}(f_n\circ D_{\frac{1}{r_n}})\, d\cH^1((\bar{x},\bar{y}))\le\\
		&\le 4\int_{\pa B^{\R^2}_{r_n}(0)\cap D } 2(k_0C\ep^{1/6}r_n)^2 +4 r_n^2 (1+\sup |\Ga'_{r_n}|)^4|\nabla^2 u_n|^2(\bar{x},\bar{y}) \frac{\bar{C}}{r_n}\, d\cH^1((\bar{x},\bar{y}))\le\\
		&\le 16\bar{C}\bigg( k_0^2 C^2\ep^{1/3}\frac{\pi}{2}r_n^2 +(1+\sup |\Ga'_{r_n}|)^4 \frac{r_n}{n}  \bigg) \le C_2r_n^2 + C_3 \frac{r_n}{n}.
	\end{split}
	\end{equation}
	
	\noindent Note that by construction $\ga'_n\xrightarrow[n]{}\pa B^{\R^2}_1(0)\cap \{y\ge 0 \}$ in $C^2$ norm. Then there exist functions $g^1_n:[0,1/4]\to\R$ such that $\ga'_n\cap \{y\le 1/4, x>0\}=\{(g^1_n(y),y)\}$ and $\| g^1_n- g^1\|_{C^2([0,1/4])}\searrow0$, with $g^1(y)=\sqrt{1-y^2}$; also extending $g^1_n$ and $g^1$ on $\R$ as we did in \eqref{ref34}, we can say that $\| g^1_n- g^1\|_{C^2(\R)}\searrow0$. Let $\chi^1\in C^\infty_c( B^{\R^2}_{1/4}((1,0)))$ be a cut off function such that $\chi|_{B^{\R^2}_{1/8}((1,0))}=1$ and consider the function
	\begin{equation*}
	F^1_n:\R^3\to \R^3 \qquad F^1_n((x,y,z))=\begin{cases}
		(x,y,z) & x\le\frac{1}{8},\\
		(1-\chi^1)\cdot (x,y,z)+\chi^1 \cdot( x-g^1_n(y)+g^1(y),y,z ) & x>\frac{1}{8}.
	\end{cases}
	\end{equation*}
	Performing the same construction around the point $(-1,0)$ and getting the corresponding function
	\begin{equation*}
	F^2_n:\R^3\to\R^3 \qquad F^2_n((x,y,z))=\begin{cases}
		(x,y,z) & x\ge-\frac{1}{8},\\
		(1-\chi^2)\cdot (x,y,z)+\chi^2 \cdot( x-g^2_n(y)+g^2(y),y,z ) & x<-\frac{1}{8},
	\end{cases}
	\end{equation*}
	we have that the diffeomorphism
	\begin{equation*}
	F_n:\R^3\to\R^3 \qquad F_n((x,y,z))= \begin{cases}
		F^1_n((x,y,z)) & x>\frac{1}{8},\\
		(x,y,z) & -\frac{1}{8}\le x\le \frac{1}{8},\\
		F^2_n((x,y,z)) & x<-\frac{1}{8},
	\end{cases}
	\end{equation*}
	is such that $\|F_n-id|_{\R^3}\|_{C^2(\R^3)}\to 0$ and $\ga''_n:=F_n(\ga'_n)$ intersects orthogonally the axis $\{y=0\}$. By the same arguments leading to \eqref{eta} we have $|\cW(\Si'_n\cap B_T(0))-\cW(F_n(\Si'_n)\cap B_T(0))|<\tilde{\eta}_n$ for some $|\tilde{\eta}_n|\searrow0$. Also by the same calculations in the \eqref{ref35}, \eqref{ref35'}, \eqref{ref35''} we have that the functions parametrizing $F_n(\Si'_n)$ satisfy the analogous relations.\\
	Hence, up to applying the sequence of diffeomorphisms $F_n$ on $\Si'_n$, in the following we can assume that
	\begin{equation} \label{ref36}
	\begin{split}
		& \eqref{eta} \quad\mbox{holds},\\
		& \eqref{ref35}, \eqref{ref35'}, \eqref{ref35''} \quad\mbox{hold},\\
		& \ga'_n \quad \mbox{ intersects orthogonally the axis } \{y=0\}.
	\end{split}
	\end{equation}\\

	\noindent Then by \eqref{ref36} the following extended functions are well defined.
	\begin{equation*}
	\begin{split}
		& \hat{u}_n': \Om'_n\cup\{ (x,y)|(x,-y)\in \Om'_n \} \to \R, \quad \hat{u}_n'((x,y)):=\begin{cases}
			u_n'((x,y)) & y\ge0,\\
			-u_n'((x,-y)) & y<0,
		\end{cases}\\
	\end{split}
	\end{equation*}
	and call $\hat{\ga}_n':=\ga_n'\cup \{ (x,y)|(x,-y)\in \ga'_n \}$. Note that by construction, the functions $\hat{u}_n'$ satisfy the inequalities in \eqref{ref35}, \eqref{ref35'}, \eqref{ref35''} (the notation adapted with $dom(\hat{u}_n')$ and $\hat{\ga}_n'$ in the right places).\\
	Consider now $R>>1$ and let $B_n=B^{\R^2}_R(0)\sm encl(\hat{\ga}_n')$.\\
	Let us adopt the following notation on trace operators
	\begin{equation*}
	\begin{split}
		&tr_{1,n}:W^{1,2}(B)\to L^2(\hat{\ga}_n'),\\
		&tr_2:W^{1,2}(B)\to L^2(\pa B_R^{\R^2}(0)),\\
		&tr:W^{1,2}(B)\to L^2 (\{y=0\}\cap B_n).
	\end{split}
	\end{equation*}
	A function $f\in C^1(\bar{B_n})\cap \{ tr_2(f)=0 \}$ verifies a Poincar\'{e}-like inequality as follows. If $T(\ro,\te)=\ro(\cos \te,\sen \te)$, then
	\begin{equation*}
	\begin{split}
		|f(x,y)|^2=|f\circ T(\bar{\ro},\te)|^2=\bigg| \int_{\bar{\ro}}^{R} \frac{\pa (f\circ T)}{\pa \ro}(\ro,\te)\,d\ro \bigg|^2\le (R-\bar{\ro}) \int_{\bar{\ro}}^{R} |\nabla f|^2|_{T(\ro,\te)}\, d\ro.
	\end{split}
	\end{equation*}
	Hence
	\begin{equation*}
	\begin{split}
		\|f\|^2_{L^2(B_n)} &= \int_0^{2\pi} \int_{r(\te)}^R  |f|^2|_{T(\bar{\ro},\te)} \bar{\ro} \,  d\bar{\ro} d\te \le \int_0^{2\pi} \int_{r(\te)}^R  (R-\bar{\ro})\int_{\bar{\ro}}^R |\nabla f|^2|_{T(\ro,\te)} \,d\ro\, \bar{\ro} \, d\bar{\ro}d\te \le\\
		&\le \int_0^{2\pi} \int_{r(\te)}^R \bigg(R-\frac{1}{2}\bigg)  \int_{\bar{\ro}}^R |\nabla f|^2|_{T(\ro,\te)}\ro \,d\ro \,  d\bar{\ro}d\te \le\\ &\le 
		\bigg(R-\frac{1}{2}\bigg) \int_0^{2\pi} \int_{r(\te)}^R   \int_{r(\te)}^R |\nabla f|^2|_{T(\ro,\te)}\ro \,d\ro \,  d\bar{\ro}d\te \le \\&\le \bigg(R-\frac{1}{2}\bigg)^2 \|\nabla f\|^2_{L^2(B_n)},
	\end{split}
	\end{equation*}
	where we used $\bar{\ro}\le \ro$ in the second inequality, $\bar{\ro}\ge r(\te)$ in the third inequality and we assumed $n$ sufficiently large so that $r(\te)>1/2$ for any $\te$.\\
	Similarly if $\nabla f\in C^1(\bar{B_n})\cap \{ tr_2(\nabla f)=0 \}$, then $\|\nabla f\|^2_{L^2(B_n)}\le 2(R-1/2)^2 \|\nabla^2 f \|^2_{L^2(B_n)}$.\\
	By approximation we get that if $v\in V=W^{2,2}\cap \{ tr_2(v)=0, tr_2(\nabla v)=0 \}$, then
	\begin{equation} \label{poincare'}
	\begin{split}
		& \|v\|_{L^2(B_n)}\le (R-1/2)\|\nabla v\|_{L^2(B_n)},\\
		& \|\nabla v\|_{L^2(B_n)}\le \sqrt{2}(R-1/2)\|\nabla^2 v\|_{L^2(B_n)}.
	\end{split}
	\end{equation}
	Hence by direct methods, \eqref{poincare'} and continuity of trace operators, for any $n$ there exists a solution to the following minimization problem
	\begin{equation*}
	\begin{split}
		\min \bigg\{\int_{B_n} |\nabla^2 v|^2 	\quad|\quad v\in W^{2,2}(B_n):\,\, & tr_{1,n}(v)=\hat{ u}_n',\,\, tr_{1,n}(\nabla v)=\nabla\hat{ u}_n',\\
		& tr_2(v)=0,\,\, tr_2(\nabla v)=0,\\
		& tr(v)=0  \bigg\}.
	\end{split}
	\end{equation*}
	Call such minimizer $w_n$. Hence $w_n$ satisfies in a weak sense the equation
	\begin{equation*}
	\begin{cases}
		\De^2w_n=0 & \mbox{on } B_n,\\
		tr_{1,n}(w_n)=\hat{ u}_n',\\ tr_{1,n}(\nabla w_n)=\nabla\hat{ u}_n',\\
		tr_2(w_n)=0,\\ tr_2(\nabla w_n)=0,\\
		tr(w_n)=0 ,
	\end{cases}
	\end{equation*}
	that implies $w_n\in C^\infty(\bar{B_n})$.\\
	By \eqref{ref36} and the same calculations of the proof of Lemma \ref{xiinterno} we get
	\begin{equation*}
	\int_{B_n} |\nabla^2 w_n|^2 \le C(B_n) \|\nabla^2 \hat{ u}'_n\|^2_{L^2(\hat{\ga}'_n)}\le C(B_n) r_n (C_2r_n+C_3 1/n),
	\end{equation*}
	and since $B_n\to B^{\R^2}_R(0)\sm B^{\R^2}_1(0)$ in $C^2$ norm, then $C(B_n)\le C^*$ for some $C^*$ independent of $n$. Hence we get
	\begin{equation*}
	D(graph(w_n)),\cW(graph(w_n))\le \ep_n \xrightarrow[n]{}0.
	\end{equation*}
	\noindent Extend now $w_n$ to the value $0$ outside of $B_R^{\R^2}(0)$ to get a function $w_n\in C^{1,1}(enlc(\hat{\ga}'_n)^c)$. We consider the following $C^{1,1}$ composite surface
	\begin{equation*}
	S_n =[ (\Si'_n)^*\cup cl(graph(w_n))]\sm \{ (x,y,z)|y<0 \},
	\end{equation*}
	where $(\Si'_n)^*$ denotes the obvious truncation at $\ga'_n$.\\
	Choosing $T>R$ in \eqref{ref36} we have
	\begin{equation*} 
	\cW(\Si_n)\ge \cW(S_n) -\ep_n+\eta_n.
	\end{equation*}
	Finally we observe that by construction $S_n$ is a $C^{1,1}$ surface of genus $1$ with the axis $\{y=0\}$ as boundary. Then by approximation by Theorem \ref{thmr} we conclude that
	\begin{equation*}
	\liminf_n \cW(\Si_n) \ge \be_1-4\pi,
	\end{equation*}
	which concludes the proof.
\end{proof}

\begin{proof}[Proof of Theorem \ref{thmmain}] Point $i)$ follows from Lemma \ref{xiinterno} and Lemma \ref{xibordo}. For point $ii)$ let $T\con\R^3$ be the standard Willmore minimizing torus, that is the stereographic projection in $\R^3$ of the Clifford torus $\S^1\times \S^1\con S^3$, and fix a point $q\in T$ such that the Gaussian curvature $K$ satisfies $K(q)>0$. Then perform the spherical inversion $I_{1,q}(T\sm\{q\})$. After isometry we get an asymptotically flat torus $S$ with asymptotic plane $\{z=0\}$ and for $\eta>0$ small enough we can assume that $S\cap \{z<\eta\}$ identifies an end of the surface. Removing such end we get a surface $S_\eta$ with planar convex boundary $\Ga_\eta$ and energy $\cW(S_\eta)<2\pi^2-4\pi$, then by $i)$, if $\inf_{\cC(\Si_\cg)}\cW= \inf_{\cC_{imm}(\Si_\cg)} \cW$ for such $\Ga_\eta$, then problem \eqref{problemmin} with boundary $\Ga_\eta$ has minimizers.\\
Finally point $iii)$ follows from the fact that the arguments in the proof of Lemma \ref{xiinterno} and Lemma \ref{xibordo} are local. More precisely, let $\cg\ge2$ and assume the hypotheses in $iii)$. By Proposition \ref{simon} a minimizing sequence $\Si_n$ still converge to $D$ as varifolds and by \eqref{diffeom} we get the existence of $P\ge1$ bad points $\xi_1,...,\xi_P$ each absorbing a quantum $\cg_i\ge1$ of genus with $\sum_{i=1}^P \cg_i =\cg$, in the sense that
	\begin{equation*}
	\exists\si_n\searrow 0 :\quad D\sm\bigcup_{i=1}^P B_{\si_n}(\xi_i) \simeq \Si_n \sm  \bigcup_{i=1}^P B_{\si_n}(\xi_i).
	\end{equation*}
	Hence applying the same arguments in the proof of Lemma \ref{xiinterno} and Lemma \ref{xibordo} at each point $\xi_i$ we now get the inequality
	\begin{equation*}
	\cW(\Si_n)\ge \sum_{i=1}^P (\be_{\cg_i}-4\pi)=\sum_{i=1}^P e_{\cg_i}.
	\end{equation*}
	Then the thesis follows by the inequality $ \sum_{i=1}^P e_{\cg_i} > e_\cg$ proved in \cite{BaKu}.
\end{proof}

\textcolor{white}{text}

\appendix

\section{Curvature varifolds with boundary}

\noindent In this appendix we recall the definitions and the results about curvature varifolds with boundary that we need throughout the whole work. This section is based on \cite{Ma} (see also \cite{SiGMT}, \cite{Hu}).\\

\noindent Let $\Om\con\R^k$ be an open set, and let $1<n\le k$. We identify a $n$-dimensional vector subspace $P$ of $\R^k$ with the $k\times k$-matrix $\{P_{ij}\}$ associated to the orthogonal projection over the subspace $P$. Hence the Grassmannian $G_{n,k}$ of $n$-spaces in $\R^k$ is endowed with the Frobenius metric of the corresponding projection matrices. Moreover given a subset $A\con\R^k$, we define $G_n(A)=A\times G_{n,k}$, endowed with the product topology. A general $n$-varifold $V$ in an open set $\Om\con\R^k$ is a non-negative Radon measure on $G_n(\Om)$. The varifold convergence is the weak* convergence of Radon measures on $G_n(\Om)$, defined by duality with $C^0_c(G_n(\Om))$ functions.\\
We denote by $\pi:G_n(\Om)\to\Om$ the natural projection, and by $\mu_V=\pi_\sharp(V)$ the push forward of a varifold $V$ onto $\Om$. The measure $\mu_V$ is called induced (weight) measure in $\Om$.\\
Given a couple $(M,\te)$ where $M\con\Om$ is countably $n$-rectifiable and $\te:M\to\N_{\ge1}$ is $\cH^n$-measurable, the symbol $\bv(M,\te)$ defines the (integer) rectifiable varifold given by
\begin{equation*}
	\int_{G_n(\Om)} \vp(x,P)\,d\bv(M,\te)(x,P) = \int_M \vp(x,T_xM)\,\te(x)\,d\cH^n(x),
\end{equation*}
where $T_xM$ is the generalized tangent space of $M$ at $x$ (which exists $\cH^n$-ae since $M$ is rectifiable). The function $\te$ is called density or multiplicity of $\bv(M,\te)$. Note that $\mu_V=\te\cH^n\res M$ in such a case.\\

\noindent From now on we will always understand that a varifold $V$ is an integer rectifiable one.\\

\noindent We say that a function $\vec{H}\in L^1_{loc}(\mu_V;\R^k)$ is the generalized mean curvature of $V=\bv(M,\te)$ and $\si_V$ Radon $\R^k$-valued measure on $\Om$ is its generalized boundary if
\begin{equation*}
	\int \div_{TM} X \, d\mu_V = - n \int \lgl \vec{H}, X \rgl \,d\mu_V + \int X\,d\si_V,
\end{equation*}
for any $X\in C^1_c(\Om;\R^k)$, where $\div_{TM} X$ is the $\cH^n$-ae defined tangential divergence of $X$ on the tangent space of $M$.\\
If $V$ has generalized mean curvature $\vec{H}$, the Willmore energy of $V$ is defined to be
\begin{equation*}
	\cW(V)=\int |H|^2\,d\mu_V.
\end{equation*}
The operator $X\mapsto\de V(X):=\int\div_{TM} X\,d\mu_V$ is called first variation of $V$. Observe that for any $X\in C^1_c(\Om;\R^k)$, the function $\vp(x,P):=\div_{P}(X)(x)=tr(P\nabla X(x))$ is continuous on $G_n(\Om)$. Hence, if $V_n\to V$ in the sense of varifolds, then $\de V_n(X)\to\de V(X)$.\\

\noindent By analogy with integration formulas classically known in the context of submanifolds, we say that a varifold $V=\bv(M,\te)$ is a curvature $n$-varifold with boundary in $\Om$ if there exist functions $A_{ijk}\in L^1_{loc}(V)$ and a Radon $\R^k$-valued measure $\pa V$ on $G_n(\Om)$ such that
\begin{equation*}
\begin{split}
	\int_{G_n(\Om)} P_{ij}\pa_{x_j}\vp(x,P) &+ A_{ijk}(x,P)\pa_{P_{jk}}\vp(x,P)   \,dV(x,P) =\\&=  n\int_{G_n(\Om)}\vp(x,P) A_{jij}(x,P) \,dV(x,P) + \int_{G_n(\Om)} \vp(x,P)\,d\pa V_i(x,P),
\end{split}
\end{equation*}
for any $i=1,...,k$ for any $\vp\in C^0_c(G_n(\Om))$. The rough idea is that the term on the left is the integral of a tangential divergence, while on the right we have integration against a mean curvature plus a boundary term. The measure $\pa V$ is called boundary measure of $V$.

\begin{thm}[\cite{Ma}]
	Let $V=\bv(M,\te)$ be a curvature varifold with boundary on $\Om$. Then the following hold true.\\
	i) $A_{ijk}=A_{ikj}$, $A_{ijj}=0$, and $A_{ijk}=P_{jr}A_{irk}+P_{rk}A_{ijr}=P_{jr}A_{ikr}+P_{kr}A_{ijr}$.\\
	ii) $P_{il}\pa V_l(x,P)=\pa V_i(x,P)$ as measures on $G_n(\Om)$.\\
	iii) $P_{il}A_{ljk}=A_{ijk}$.\\
	iv) $H_i(x,P):=\frac{1}{n}A_{jij}(x,P)$ satisfies that $P_{il}H_l(x,P)=0$ for $V$-ae $(x,P)\in G_n(\Om)$.\\
	v) $V$ has generalized mean curvature $\vec{H}$ with components $H_i(x,T_xM)$ and generalized boundary $\si_V=\pi_\sharp(\pa V)$.
\end{thm}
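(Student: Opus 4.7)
The overall strategy is to exploit the defining integral identity of a curvature varifold with boundary as a distributional relation on $G_n(\Omega)$, noting that the Grassmannian $G_{n,k}$ sits inside $\R^{k\times k}$ as the submanifold cut out by the equations ``$P$ symmetric'', ``$\mathrm{tr}(P)=n$'', and ``$P^2=P$''. Each claim (i)--(v) is read off by plugging a carefully chosen scalar test $\varphi\in C^1_c(G_n(\Omega))$ into the defining identity and using the constraint structure of the Grassmannian.

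For point (i), I would take $\varphi(x,P)=\psi(x)g(P)$ where $\psi\in C^1_c(\Omega)$ is arbitrary and $g$ is one of the defining functions of $G_{n,k}$. Since $g\equiv 0$ on $G_n(\Omega)$, all $\varphi$-terms in the identity (both sides) drop out, leaving $\int A_{ijk}\,\psi(x)\,\partial_{P_{jk}}g(P)\,dV=0$, and arbitrariness of $\psi$ forces the pointwise relation $A_{ijk}\partial_{P_{jk}}g=0$ $V$-almost everywhere. Taking $g(P)=P_{ab}-P_{ba}$ produces $A_{iab}=A_{iba}$; taking $g(P)=\mathrm{tr}(P)-n$ produces $A_{ijj}=0$; and taking $g(P)=(P^2)_{ab}-P_{ab}$, whose derivative is $\partial_{P_{jk}}g=\delta_{aj}P_{kb}+\delta_{bk}P_{aj}-\delta_{aj}\delta_{bk}$, produces the idempotency identity $A_{iab}=P_{aj}A_{ijb}+P_{bk}A_{iak}$. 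The three identities together say that for each fixed $i$, the matrix $(A_{ijk})_{jk}$ lies in the tangent space to $G_{n,k}$ at $P$.

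For point (v), I plug in $\varphi=X_i(x)$ depending only on $x$: then $\partial_{P_{jk}}\varphi=0$, the $A$-term on the LHS drops, and summing the $k$ identities against an $\R^k$-valued test $X$ yields the weak mean-curvature relation
\[
\int\mathrm{div}_{TM}X\,d\mu_V=n\int X_i A_{jij}\,d\mu_V+\int X\cdot d(\pi_\sharp\pa V),
\]
so that $V$ has $\vec H_i=H_i(x,T_xM)$ and $\sigma_V=\pi_\sharp(\pa V)$. For the ``tangency'' claims (ii) and (iii), I test with mixed scalar functions on $G_n(\Omega)$. For (iii) the choice $\varphi(x,P)=\psi(x)P_{lm}$, combined with $P_{ij}P_{im}=P_{jm}$ (from $P^2=P$ and $P=P^T$) and the algebraic identity $A_{iim}=nH_m$ from (i), compared against the $\varphi=\psi$ identity at free index $m$, yields the pointwise relation $(\delta_{il}-P_{il})A_{ljk}=0$. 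For (ii), the parallel test $\varphi(x,P)=X_l(x)(\delta_{il}-P_{il})$ causes the LHS $x$-derivative term to vanish identically on $G_n(\Omega)$, because $P_{ij}(\delta_{il}-P_{il})=P_{lj}-(P^2)_{jl}=0$; comparing with the $\varphi=X_l$ case then isolates the identity $P_{il}\,\pa V_l=\pa V_i$. Finally, (iv) is purely algebraic once (i) and (iii) are available: contracting the idempotency identity along $j=k$ and using $A_{ijj}=0$ together with the symmetry $A_{ijk}=A_{ikj}$ gives $P_{jl}A_{ijl}=0$; combined with (iii) in the form $P_{sj}A_{jsl}=A_{ssl}=nH_l$, substitution into the $i=k$ contraction of the idempotency identity yields $nH_l-nP_{ls}H_s=nH_l$, i.e.\ $P_{ls}H_s=0$.

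The principal technical obstacle I anticipate is the proof of (iii): unlike the identities of (i), it cannot be read off from a single choice of $\varphi$ vanishing on the Grassmannian (such choices only encode that $(A_{ijk})_{jk}$ is tangent to $G_{n,k}$ at $P$), because (iii) is a constraint on the \emph{first} index of $A$. It therefore requires combining two distinct testings and using $P^2=P$ together with $P=P^T$ and the already-established symmetry of $A$ in its last two indices. Once (iii) is secured, both (ii) and (iv) reduce to elementary algebraic manipulations with the identities of (i).
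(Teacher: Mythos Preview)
The paper does not give its own proof of this theorem: it is quoted from \cite{Ma} (and ultimately \cite{Hu}) and stated without argument in Appendix~A. So there is no ``paper's proof'' to compare against; what you have written is essentially a reconstruction of the original Hutchinson--Mantegazza argument, and for parts (i), (v) and (iv)-given-(iii) your outline is correct and is precisely the standard route: use test functions $\psi(x)g(P)$ with $g$ a defining function of $G_{n,k}\subset\R^{k\times k}$ to force the algebraic constraints on the last two indices of $A$, then specialize to $\varphi=\varphi(x)$ to recover the first variation identity.

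There is, however, a genuine gap in your argument for (iii), which you correctly flag as the principal obstacle. The comparison you propose --- plugging $\varphi=\psi(x)P_{lm}$ into the identity with free index $i$, and comparing with $\varphi=\psi(x)$ at free index $m$ --- does \emph{not} yield $(\delta_{il}-P_{il})A_{ljk}=0$. If you carry it out, setting $l=i$ and summing, the $x$-derivative terms do collapse via $P_{ij}P_{im}=P_{jm}$, but what remains is only a relation linking the quantities $P_{im}H_i-H_m$ and $P_{im}\,d\partial V_i-d\partial V_m$; in other words you obtain a coupling between items (ii) and (iv), not (iii). The reason is structural: tests of the form $\psi(x)g(P)$, together with the trace identities, can only constrain the \emph{last two} indices of $A$ (they say $(A_{ijk})_{jk}\in T_PG_{n,k}$), whereas (iii) is a constraint on the \emph{first} index. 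To get (iii) one has to feed the full identity back into itself: take $\varphi(x,P)=P_{is}\chi(x,P)$ in the identity with free index $i$, sum over $i$, use $P_{ij}P_{is}=P_{sj}$ and $A_{iis}=nH_s$, and then subtract the identity with free index $s$ applied to the \emph{same} $\chi$. This produces
\[
\int (P_{is}A_{ijk}-A_{sjk})\,\partial_{P_{jk}}\chi\,dV \;=\; \text{(terms involving }\chi\text{ only)},
\]
and one then argues that the left-hand side is the ``tangential $P$-derivative'' of a new pair of curvature data; uniqueness of such data (which is the real content here, and which also uses (i)) forces $P_{is}A_{ijk}=A_{sjk}$. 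Your sketch skips this step, and the specific substitution you describe cannot replace it.
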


\noindent We call the functions $\sff_{ij}^k(x):=P_{il}A_{jkl}$ components of the generalized second fundamental form of a curvature varifold $V$. Observe that $\sff_{jj}^k=P_{jl}A_{jlk}=A_{jjk}-P_{kl}A_{jjl}=A_{jkj}-P_{kl}A_{jlj}=nH_k-nP_{kl}H_l=nH_k$, and $A_{ijk}=\sff^k_{ij}+\sff^j_{ki}$.\\

\noindent In conclusion we state the compactness theorem that we use in this work.

\begin{thm}[\cite{Ma}]\label{thm1}
	Let $p>1$ and $V_l$ a sequence of curvature varifolds with boundary in $\Om$. Call $A_{ijk}^{(l)}$ the functions $A_{ijk}$ of $V_l$. Suppose that $A_{ijk}^{(l)}\in L^p(V)$ and
	\begin{equation*}
		\sup_l \quad\bigg\{\mu_{V_l}(W) + \int_{G_n(W)} \bigg|\sum_{i,j,k} |A^{(l)}_{ijk}|\bigg|^p\,dV_l + |\pa V_l|(G_n(W))\bigg\}\quad\le C(W)<+\infty
	\end{equation*}
	for any $W\con\con G_n(\Om)$, where $|\pa V_l|$ is the total variation measure of $\pa V_l$. Then:\\
	i) up to subsequence $V_l$ converges to a curvature varifold with boundary $V$ in the sense of varifolds. Moreover $A^{(l)}_{ijk} V_l\to A_{ijk}V$ and $\pa V_l\to \pa V$ weakly* as measures on $G_n(\Om)$;\\
	ii) for every lower semicontinuous function $f:\R^{k^3}\to[0,+\infty]$ it holds that
	\begin{equation*}
		\int_{G_n(\Om)} f(A_{ijk})\,dV \le \liminf_l \int_{G_n(\Om)} f(A_{ijk}^{(l)})\,dV_l.
	\end{equation*}
\end{thm}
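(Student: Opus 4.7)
The plan is to run a standard weak* compactness argument in the space of Radon measures, then identify the limiting objects using absolute continuity forced by the $L^p$ bound with $p>1$, and finally deduce both the curvature varifold structure and the lower semicontinuity.

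First, the uniform mass bound $\mu_{V_l}(W)\le C(W)$ together with $V_l\ge 0$ gives a uniform bound on $V_l(G_n(W))$, so by Banach--Alaoglu for Radon measures there is a subsequence (not relabelled) with $V_l\to V$ as varifolds. Similarly $|\partial V_l|(G_n(W))\le C(W)$ gives (componentwise) weak* subsequential convergence of the vector Radon measures $\partial V_l$ to some $\R^k$-valued Radon measure $\beta$ on $G_n(\Omega)$. Now consider the vector Radon measures $A^{(l)}_{ijk}V_l$: by H\"older,
\begin{equation*}
	\int_{G_n(W)} |A^{(l)}_{ijk}|\,dV_l \le \Bigl(\int_{G_n(W)} |A^{(l)}_{ijk}|^p\,dV_l\Bigr)^{1/p} V_l(G_n(W))^{1-1/p}\le C'(W),
\end{equation*}
so up to a further diagonal subsequence $A^{(l)}_{ijk}V_l\overset{\star}{\rightharpoonup}\mu_{ijk}$ weakly* for some $\R$-valued Radon measure $\mu_{ijk}$ on $G_n(\Omega)$.

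Second, I would show $\mu_{ijk}\ll V$. For any open $U\subset\subset G_n(\Omega)$, weak* lower semicontinuity of total variation on open sets gives
\begin{equation*}
|\mu_{ijk}|(U)\le \liminf_l |A^{(l)}_{ijk}V_l|(U)\le \liminf_l \Bigl(\int_U |A^{(l)}_{ijk}|^p dV_l\Bigr)^{1/p} V_l(U)^{1-1/p}\le C(U)^{1/p}\,V(\overline{U})^{1-1/p},
\end{equation*}
where the last step uses upper semicontinuity of $V_l(\overline{U})$ under weak* convergence. Since $p>1$, this inequality implies $|\mu_{ijk}|(E)=0$ whenever $V(E)=0$ (approximate any $V$-null Borel set by open neighbourhoods of small $V$-measure). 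By Radon--Nikodym, $\mu_{ijk}=A_{ijk} V$ for some $A_{ijk}\in L^1_{\mathrm{loc}}(V)$. An analogous Fatou-type lower semicontinuity (or Reshetnyak applied to the convex integrand $t\mapsto |t|^p/s^{p-1}$ made $1$-homogeneous by the standard perspective trick) gives $\int|A_{ijk}|^p dV\le\liminf\int|A^{(l)}_{ijk}|^p dV_l$, hence $A_{ijk}\in L^p_{\mathrm{loc}}(V)$.

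Third, (i) is concluded by passing to the limit in the defining identity of a curvature varifold with boundary. For any test $\varphi\in C^0_c(G_n(\Omega))$ and any $\psi\in C^1_c(G_n(\Omega))$, the terms
\begin{equation*}
	\int P_{ij}\partial_{x_j}\psi\,dV_l,\qquad \int A^{(l)}_{ijk}\partial_{P_{jk}}\psi\,dV_l,\qquad \int \varphi A^{(l)}_{jij}\,dV_l,\qquad \int \varphi\,d\partial V_l
\end{equation*}
converge respectively to the corresponding expressions with $V,A_{ijk},\beta$: the first by varifold convergence applied to the continuous compactly supported function $P_{ij}\partial_{x_j}\psi$, the second and third by $A^{(l)}_{ijk}V_l\overset{\star}{\rightharpoonup}A_{ijk}V$ tested against the continuous compactly supported functions $\partial_{P_{jk}}\psi$ and $\varphi$, and the fourth by definition. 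Setting $\partial V:=\beta$, this identifies $V$ as a curvature varifold with boundary having coefficients $A_{ijk}$. Integer rectifiability of the limit follows by invoking Allard's rectifiability theorem combined with the standard structural result of Mantegazza (the $L^p$ bound on $A_{ijk}$ with $p>1$ suffices for integrality to pass to the limit, see \cite{Ma}).

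Finally, for (ii) I would invoke Reshetnyak's lower semicontinuity theorem applied to the $\R^{k^3+1}$-valued Radon measure $\eta_l:=(V_l,(A^{(l)}_{ijk} V_l)_{ijk})$, whose weak* limit is $\eta:=(V,(A_{ijk}V)_{ijk})$. The Radon--Nikodym density of $\eta_l$ with respect to $|\eta_l|$ is $(1,(A^{(l)}_{ijk}))/\sqrt{1+\sum|A^{(l)}_{ijk}|^2}$, and for a lsc $f:\R^{k^3}\to[0,+\infty]$ the $1$-homogeneous extension $\widetilde f(t,a):=t\,f(a/t)$ for $t>0$, with the appropriate lsc extension to $t=0$, allows one to rewrite $\int f(A_{ijk})\,dV$ and its $V_l$-counterparts as $\int \widetilde f\,d|\eta|$ and $\int\widetilde f\,d|\eta_l|$ respectively. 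Reshetnyak's theorem (in its convex version after approximating a general lsc $f$ from below by convex lsc envelopes, or by a direct sup of finitely many affine lsc minorants) then yields the inequality.

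The main obstacle is the interplay between varifold convergence on $G_n(\Omega)$ and the weak* convergence of the \emph{signed} curvature measures: establishing $\mu_{ijk}\ll V$ is what crucially uses the strict inequality $p>1$, and extending Reshetnyak's lower semicontinuity from convex to merely lsc integrands requires a careful approximation (and at that point the $1$-homogenization of $f$ must be handled so that the resulting integrand is still lsc on the boundary locus $\{t=0\}$). These are exactly the places where, if one dropped $p=1$, the argument would collapse.
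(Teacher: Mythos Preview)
The paper does not prove this theorem: it is quoted from \cite{Ma} as a known result, with no argument supplied. There is therefore nothing in the paper to compare your proposal against.

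That said, your outline for part (i) is the standard one and is correct in substance: weak* compactness of $V_l$, $\partial V_l$ and $A^{(l)}_{ijk}V_l$ from the uniform bounds; the $L^p$ estimate with $p>1$ forces the weak* limit of $A^{(l)}_{ijk}V_l$ to be absolutely continuous with respect to $V$, yielding limit coefficients $A_{ijk}$; and one passes to the limit in the defining identity. One small point: invoking ``Mantegazza's structural result'' for integrality is circular here, since that is precisely \cite{Ma}; integrality actually comes from Allard's integral compactness theorem, once one observes that the $L^p$ bound on $A^{(l)}_{ijk}$ together with the bound on $|\partial V_l|$ yields locally uniformly bounded first variations.

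For part (ii) your proposal has a genuine gap. Reshetnyak's theorem delivers lower semicontinuity for \emph{convex} positively $1$-homogeneous integrands; your suggested fix for a general lower semicontinuous $f$ --- approximating from below by convex envelopes --- only gives $\int f^{**}(A_{ijk})\,dV \le \liminf_l \int f(A^{(l)}_{ijk})\,dV_l$, which is strictly weaker when $f$ is not convex. In fact the inequality for \emph{every} nonnegative lsc $f$ cannot follow from the convergences you establish in (i): take $V_l=V$ fixed, $A^{(l)}$ an oscillating sequence bounded in $L^p(V)$ with weak limit $0$, and $f(t)=\max(0,1-|t|^2)$. In the original sources the lower semicontinuity is stated and proved for convex $f$, which is all the present paper ever uses (only the quadratic integrands $|H|^2$ and $|A|^2$ appear). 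So either read the statement with $f$ convex --- in which case your Reshetnyak argument is exactly right and nothing more is needed --- or accept that the general lsc claim would require a stronger notion of convergence (measure--function pair convergence in Hutchinson's sense) that does not follow from what (i) alone provides.
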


\noindent It follows from the above theorem that the Willmore energy is lower semicontinuous with respect to varifold convergence of curvature varifolds with boundary satisfying the hypotheses of Theorem \ref{thm1}.\\

\noindent Finally we give the definition of image varifold used in this work. Let $\bv(M,\te)$ be an integer rectifiable varifold in $\Om$, and let $f:A\con\Om\to B\con\R^k$ be a Lipschitz proper function with $M\con A$. Then the image varifold $\Imm(f)$ is defined by $\Imm(f)=\bv(f(M),\tilde{\te})$ with $\tilde{\te}(y)=\displaystyle\sum_{x\in f^{-1}(x)\cap M}\te(x)$.\\

\textcolor{white}{text}
\section{Useful results}

\noindent For the convenience of the reader, here we collect some useful technical results that we need in the proofs of this work.

\begin{lemma}[Graphical Decomposition]
	Let $\Si\con\R^3$ be a smooth immersed surface with embedded boundary $\Ga$, and let $p_0\in\Si$. For any $\be>0$ there is $\ep_0=\ep_0(\be)>0$ such that if $\ep\in(0,\ep_0]$, if $|\Si\cap \overline{B_\ro(p_0)}|\le\be\ro^2$, and if $D(B_\ro(p_0))\le\ep^2$, then the following holds.\\
	There are pairwise disjoint closed topological discs $P_1,...,P_N\con\Si$ with $\sum_{j=1}^N \diam P_j \le C\sqrt{\ep}\ro$ such that
	\begin{equation*}
		\Si\cap B_{\frac{\ro}{2}}(p_0)\sm\bigg(\bcup_{i=1}^N P_j\bigg)=\bigg(\bcup_{i=1}^M graph(u_i)\bigg)\cap B_{\frac{\ro}{2}}(p_0),
	\end{equation*}
	where $u_i\in C^\infty(\overline{\Om_i};L_i^\perp)$ with $L_i$ plane in $\R^3$ and $\Om_i$ piecewise smooth bounded connected domain in $L_i$ of the form $\Om^0_i\sm\big(\cup_k d_{i,k}\big)$, where $\Om^0_i$ is simply connected and $d_{i,k}$ are pairwise disjoint closed discs in $L_i$ such that
	\begin{equation*}
		M\le C\be, \qquad \sup_{\Om_i} \frac{|u_i|}{\ro}+\sup_{\Om_i} |\nabla u_i| \le C\ep^{1/6}.
	\end{equation*}
	Moreover, for any $\si\in(\ro/4,\ro/2)$ such that $\Si$ intersects $\pa B_\si(p_0)$ transversely and $\pa B_\si(p_0)\cap (\cup_j P_j)=\epty$ we have
	\begin{equation*}
		\Si\cap \overline{B_\si(p_0)}=\bcup_{i=1}^M D_{\si,i},
	\end{equation*}
	where $D_{\si,i}$ is a topological disc with $graph (u_i)\cap \overline{B_\si(p_0)}\con D_{\si,i}$, and $D_{\si,i}\sm graph u_i$ is a union of a subcollection of the $P_j$'s.\\
	Moreover, if $p_0\not\in\Ga$, then for $\ro<d(p_0,\Ga)$ the domains $\Om_i$ are smooth and $d_{i,k}\cap \Ga=\epty$.
\end{lemma}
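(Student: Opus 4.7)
The plan is to follow the classical scheme of Simon in \cite{SiEX}, adapted to the presence of the embedded boundary curve $\Ga$. The starting point is to select the points where the curvature concentrates, and then to show that outside of a controlled neighborhood of those points the surface can be written as graphs with small tilt.

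\textbf{Step 1: selection of bad discs.} Fix a small threshold $\eta=\eta(\be)>0$ to be chosen. A point $p\in\Si\cap \overline{B_{3\ro/4}(p_0)}$ is called \emph{bad} if $\int_{\Si\cap B_r(p)}|\sff|^2 \ge \eta^2$ for every $r\in (0,\ro/8]$ with $B_r(p)\subset B_\ro(p_0)$. Using a Vitali-type covering of the set of bad points by balls of radii $r_j$ at which the above $L^2$-integral equals $\eta^2$, the hypothesis $D(B_\ro(p_0))\le \ep^2$ yields $\sum_j r_j \le C\eta^{-2}\ep^2\cdot\eta^{-?}$; more carefully, choosing $\eta = \ep^{1/2}$ gives $\sum_j r_j \le C\sqrt{\ep}\ro$. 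The discs $P_j$ will be topological discs in $\Si$ containing these selected balls and so that $\sum_j \diam P_j \le C\sqrt{\ep}\ro$.

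\textbf{Step 2: graph representation away from the bad set.} At every point $q\in\Si\cap B_{3\ro/4}(p_0)$ which is not bad, one can find a scale $r_q\in(0,\ro/8]$ such that $\int_{\Si\cap B_{r_q}(q)}|\sff|^2<\eps^2$. The tilt-excess identity $\int_{\Si\cap B_{r_q}(q)}|T_x\Si - L|^2\le C r_q^2 \int|\sff|^2 + $ area terms then gives a plane $L_q$ within $C\ep^{1/6}$ in $L^2$ of the tangent planes. A standard Lipschitz approximation/Allard-type argument (cf. Lemma 2.1 of \cite{SiEX}) upgrades this to: each connected component of $\Si\cap B_{r_q/2}(q)$ which contains a good point is the graph over a domain in $L_q$ of a function with $|u|/r_q + |\nabla u| \le C\ep^{1/6}$. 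Patching these local graphs over a maximal covering of $B_{\ro/2}(p_0)\setminus \cup_j P_j$ by such balls, and collecting connected sheets into global graphs $u_i:\Om_i\to L_i^\perp$, produces the decomposition; the holes $d_{i,k}$ in $\Om_i^0$ are exactly the vertical projections of the pieces removed (other sheets or the $P_j$'s).

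\textbf{Step 3: counting, area bound, and slice property.} The uniform area hypothesis $|\Si\cap\overline{B_\ro(p_0)}|\le \be\ro^2$ combined with the monotonicity-type inequality $|\Si\cap B_{\ro/2}(p_0)|\ge c\ro^2$ on each graphical sheet bounds the number of graphs $M\le C\be$. For the last part of the statement, pick $\si\in(\ro/4,\ro/2)$ at which $\Si$ is transverse to $\pa B_\si(p_0)$ and $\pa B_\si(p_0)\cap \cup_j P_j = \emptyset$ (such $\si$ exist by a coarea / measure argument since $\sum\diam P_j\le C\sqrt{\ep}\ro\ll \ro/4$); then $\Si\cap \overline{B_\si(p_0)}$ splits into components $D_{\si,i}$ each of which, outside the $P_j$'s, coincides with $\mathrm{graph}(u_i)\cap \overline{B_\si(p_0)}$, and attaching back the (topologically disc) pieces $P_j$ keeps each $D_{\si,i}$ a topological disc.

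\textbf{Step 4: interior and boundary cases.} If $p_0\notin \Ga$ and $\ro<d(p_0,\Ga)$ the whole argument takes place in the interior, so the selected domains $\Om_i$ inherit smoothness from the local graph construction and none of the $d_{i,k}$ can meet $\Ga$. When $p_0\in \Ga$ the only change is that some of the $\Om_i$ have a piece of their boundary on $\Ga$, which is embedded; the bad-point selection and tilt-excess argument go through unchanged since $\Ga$ does not contribute to the interior bulk estimates on $|\sff|^2$. The main technical obstacle is the step that upgrades the $L^2$-tilt control into a pointwise $C^1$ bound with the explicit $\ep^{1/6}$-dependence; this is exactly where one invokes the Hardt-type Lipschitz approximation followed by a Caccioppoli-type iteration, and chooses $\eta=\sqrt{\ep}$ so that the selection radius $\sum r_j \le C\sqrt{\ep}\ro$ matches the exponent in the graph estimate.
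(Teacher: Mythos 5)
The paper does not reprove this lemma from scratch: its entire proof is a citation of Lemma 2.1 in \cite{SiEX} together with the observations of \cite{ScBP} at p.~280, which supply the boundary adaptation. Your proposal instead attempts to reconstruct Simon's argument, which is legitimate in principle, but as written it contains two genuine gaps.

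First, Step 1 does not actually establish $\sum_j \diam P_j\le C\sqrt{\ep}\ro$. A Vitali covering of the bad set by disjoint balls on which $\int|\sff|^2=\eta^2$ bounds the \emph{number} of such balls by $\ep^2/\eta^2$, not the sum of their radii; to pass from a cardinality bound to a length bound you need an upper bound on each individual radius, and that is exactly the part you leave open (your displayed estimate literally contains an undetermined exponent ``$\eta^{-?}$''). In Simon's argument the control on $\sum_j\diam P_j$ comes from a different mechanism: the pimples $P_j$ are topological discs whose diameters are controlled through their area and curvature content (an estimate of the type $\diam P\le C\big(|P|\int_P|\sff|^2\big)^{1/2}$ combined with a coarea selection of good slicing radii), and the exponent $1/2$ in $\sqrt{\ep}$ is the output of that bookkeeping, not of an a priori choice $\eta=\sqrt{\ep}$. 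As it stands, your Step 1 asserts the desired conclusion rather than proving it.

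Second, Step 4 dismisses the boundary case too quickly. The claim that the bad-point selection and tilt-excess argument ``go through unchanged since $\Ga$ does not contribute to the interior bulk estimates'' is not accurate: at points of $\Ga$ the first variation of $\Si$ carries a boundary term, the density lower bound degenerates to $1/2$, and the local graphical representation must be taken over a half-disc-type domain part of whose boundary is the projection of $\Ga$. This is precisely why the statement only asserts that the $\Om_i$ are piecewise smooth in general and allows $d_{i,k}\cap\Ga\neq\emptyset$, and it is exactly the content of the observations in \cite{ScBP} that the paper cites alongside \cite{SiEX}. Your sketch acknowledges that some $\Om_i$ meet $\Ga$, but gives no argument for why Simon's interior machinery (monotonicity, the Poincar\'{e}-type inequality on the graph domains, the selection of transverse radii) survives the presence of the boundary; an explicit device such as reflection across $\Ga$ or the half-space monotonicity formula is needed here and is missing.
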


\begin{proof}
	The proof immediately follows by Lemma 2.1 in \cite{SiEX} and the observations in \cite{ScBP} at p. 280.
\end{proof}

\begin{lemma}[Biharmonic Comparison, Lemma 2.2 in \cite{SiEX}]\label{lem1}
	Let $u\in C^2(U)$ with $U\con\R^2$ smooth open neighborhood of $\pa B^{\R^2}_\ro(0)$. Call $\Si$ the surface parametrized by $u$ as graph. Suppose that $|\nabla u|\le \ep$. Let $w\in C^\infty(\overline{B^{\R^2}_\ro(0)})$ such that
	\begin{equation*}
	\begin{cases}
		\De^2 w=0 & B^{\R^2}_\ro(0),\\
		w=u 		&\pa B^{\R^2}_\ro(0),\\
		\nabla w= \nabla u   &\pa B^{\R^2}_\ro(0).
	\end{cases}
	\end{equation*}
	Then
	\begin{equation*}
		\int_{B^{\R^2}_\ro(0)} |\nabla^2 w|^2 \le C\ro \int_{graph\big(u|_{\pa B^{\R^2}_\ro(0)}\big)} |\sff_\Si|^2\, d\cH^1,
	\end{equation*}
	where $C=C(\ep)$ depends only on $\ep$ and it is uniformly bounded for $\ep\in[0,1]$.
\end{lemma}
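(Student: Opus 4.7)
The plan is to reduce to the unit disk by scaling, bound $\int|\nabla^2 w|^2$ via the variational characterization of $w$, and finally convert the boundary integrand $|\nabla^2 u|^2$ on $\pa B^{\R^2}_\rho(0)$ to $|\sff_\Si|^2$ on the graph. Setting $W(y)=\rho^{-1}w(\rho y)$ and $U(y)=\rho^{-1}u(\rho y)$, the target inequality is equivalent to its scale-invariant version $\int_{B^{\R^2}_1(0)}|\nabla^2 W|^2 \leq C(\ep)\int_{\mathrm{graph}(U|_{\pa B^{\R^2}_1(0)})}|\sff|^2\, d\cH^1$; the explicit factor $\rho$ in the statement arises precisely from the scaling of the arclength element on $\pa B^{\R^2}_\rho(0)$. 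So I assume $\rho=1$ in what follows.

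Since $\De^2 w=0$ in $B^{\R^2}_1(0)$ with the Dirichlet and Neumann traces of $u$ on $\pa B^{\R^2}_1(0)$, integration by parts shows that $w$ uniquely minimizes the functional $v\mapsto \int_{B^{\R^2}_1(0)}|\nabla^2 v|^2$ among $v\in H^2(B^{\R^2}_1(0))$ with these traces. It therefore suffices to bound the biharmonic energy of a single admissible competitor. Because replacing $u$ by $u-\ell$ replaces the corresponding minimizer by $w-\ell$ and preserves both Hessians, I am free to subtract an affine function $\ell(x)=c_0+c_1x_1+c_2x_2$; I choose the three parameters so that $\int_{\pa B^{\R^2}_1(0)}(u-\ell)=0$ and so that the mean of $\nabla(u-\ell)$ on $\pa B^{\R^2}_1(0)$ vanishes. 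Poincar\'e--Wirtinger on $\pa B^{\R^2}_1(0)\simeq S^1$ then yields
\[
\|u-\ell\|_{L^2(\pa B^{\R^2}_1(0))}+\|\nabla(u-\ell)\|_{L^2(\pa B^{\R^2}_1(0))}\leq C\|\nabla^2 u\|_{L^2(\pa B^{\R^2}_1(0))}.
\]
Using a bounded right-inverse $E\colon H^{3/2}(\pa B^{\R^2}_1(0))\times H^{1/2}(\pa B^{\R^2}_1(0))\to H^2(B^{\R^2}_1(0))$ of the trace map, the competitor $v:=E((u-\ell)|_{\pa B^{\R^2}_1(0)},\,\pa_\nu(u-\ell)|_{\pa B^{\R^2}_1(0)})$ satisfies $\int_{B^{\R^2}_1(0)}|\nabla^2 v|^2 \leq C(\|g\|_{H^{3/2}}^2 + \|h\|_{H^{1/2}}^2) \leq C\|\nabla^2 u\|_{L^2(\pa B^{\R^2}_1(0))}^2$, after estimating the tangential derivatives of $g$ and $h$ on $\pa B^{\R^2}_1(0)$ by $|\nabla^2 u|+|\nabla u|$ and absorbing the lower-order terms via the Poincar\'e normalization above. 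By minimality, the same bound holds for $\int|\nabla^2 w|^2$.

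For the conversion step, the graph of $u$ satisfies the pointwise relation $\sff_{ij}=(1+|\nabla u|^2)^{-1/2}u_{ij}$, and the inverse induced metric differs from $\de^{ij}$ by a term of size $O(|\nabla u|^2)$; under the smallness hypothesis $|\nabla u|\leq \ep\leq 1$, these give a two-sided bound $|\nabla^2 u|^2\leq C(\ep)|\sff_\Si|^2$ pointwise, with $C(\ep)$ uniformly bounded on $[0,1]$. Combined with $d\cH^1_{\mathrm{graph}}\geq d\cH^1_{\pa B^{\R^2}_1(0)}$, this yields $\int_{\pa B^{\R^2}_1(0)}|\nabla^2 u|^2\, d\cH^1 \leq C(\ep)\int_{\mathrm{graph}(u|_{\pa B^{\R^2}_1(0)})}|\sff_\Si|^2\, d\cH^1$. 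Assembling the estimates and undoing the scaling completes the proof.

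The main obstacle is the affine subtraction / Poincar\'e step: one must verify that the three parameters of $\ell$ annihilate exactly the three Fourier modes on $S^1$ that obstruct the Poincar\'e inequalities (the mean of $u|_{\pa B^{\R^2}_1(0)}$, killed by $c_0$, and the two components of the mean of $\nabla u|_{\pa B^{\R^2}_1(0)}$, killed by $c_1,c_2$), so that the $H^{3/2}$ and $H^{1/2}$ trace norms of $u-\ell$ and $\pa_\nu(u-\ell)$ are controlled by $\|\nabla^2 u\|_{L^2(\pa B^{\R^2}_1(0))}$ alone. Once this normalization is handled correctly, the remainder is the standard variational principle for the biharmonic equation, bounded trace extension, and routine pointwise algebra comparing intrinsic and graph quantities under a smallness assumption on $\nabla u$.
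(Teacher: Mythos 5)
Your proof is correct. Note that the paper itself gives no argument for this lemma: it is imported verbatim from \cite{SiEX} (Lemma 2.2 there), and the only related content in Appendix B is the remark that $w$ may be taken as the minimizer of $\int|\nabla^2 v|^2$ — which is exactly the variational characterization you start from. Your route (rescale to $\ro=1$; observe that $w$ minimizes the Hessian energy among competitors with the same Cauchy data, so it suffices to exhibit one good competitor via a bounded trace extension; normalize by an affine function so that Poincar\'e--Wirtinger on $\pa B^{\R^2}_1(0)$ controls the lower-order trace norms by $\|\nabla^2 u\|_{L^2(\pa B^{\R^2}_1(0))}$; convert $|\nabla^2 u|^2$ to $|\sff_\Si|^2$ using $\sff_{ij}=(1+|\nabla u|^2)^{-1/2}u_{ij}$, the closeness of $g^{ij}$ to $\de^{ij}$ for $|\nabla u|\le\ep\le1$, and $d\cH^1_{\mathrm{graph}}\ge d\cH^1_{\pa B^{\R^2}_1(0)}$) is the standard argument and is in the same spirit as Simon's original proof, which carries out the extension bound by an explicit Fourier decomposition of the clamped biharmonic problem on the disk rather than by an abstract trace right-inverse. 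The step you flag as the main obstacle does close as you describe: the three parameters of $\ell$ annihilate precisely the mean of $u$ and the two components of the mean of $\nabla u$ on the circle, and since tangential differentiation of $(u-\ell)|_{\pa B^{\R^2}_1(0)}$ and of $\pa_\nu(u-\ell)$ produces only contractions of $\nabla^2 u$ with $\tau,\nu$ plus curvature-of-the-circle terms proportional to $\nabla(u-\ell)$, the full $H^2(\pa B^{\R^2}_1(0))\times H^1(\pa B^{\R^2}_1(0))$ norm of the Cauchy data — hence a fortiori the $H^{3/2}\times H^{1/2}$ norm — is bounded by $C\|\nabla^2 u\|_{L^2(\pa B^{\R^2}_1(0))}$ alone, with the $\ep$-dependence of the final constant entering only in the pointwise comparison with $|\sff_\Si|^2$. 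Two cosmetic points: only the one-sided bound $|\nabla^2 u|^2\le C(\ep)|\sff_\Si|^2$ is needed (you mention a two-sided one), and in the tangential-derivative estimate the lower-order term should be $|\nabla(u-\ell)|$ rather than $|\nabla u|$, which is what your Poincar\'e normalization actually controls.
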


\begin{lemma}[Boundary Biharmonic Comparison]\label{lem2}
	Let $\psi:(-a,a)\to\R$ with $\psi\in C^2$ parametrize as graph a curve $\Ga\con\R^2$. Assume $\psi(0)=0$, $|\psi'|<\de\in(0,1)$, and $|\psi''|\le\La$. Define $A:=\{(x,y)\,|\,y\ge\psi(x)\}$ and let $\ro\in(-a/2,a/2)$. Let $u\in C^2(U)$ with $U$ open neighborhood in $A$ of $\pa B^{\R^2}_\ro(0)\cap A$. Call $\Si$ the surface with boundary parametrized by $u$ as graph. Suppose
	\begin{equation*}
		u|_{\Ga\cap U}=0, \qquad |\nabla u|\le \ep.
	\end{equation*} 
	Let $w\in C^\infty(\overline{B^{\R^2}_\ro(0)})$ such that
	\begin{equation*}
	\begin{cases}
		\De^2 w=0 & B^{\R^2}_\ro(0),\\
		w=\bar{u} 		&\pa B^{\R^2}_\ro(0),\\
		\nabla w= \nabla \bar{u}   &\pa B^{\R^2}_\ro(0),\\
		w=0 & \Ga\cap B^{\R^2}_\ro(0),
	\end{cases}
	\end{equation*}
	where $\bar{u}, \nabla\bar{u}$ denote continuous extensions of $u,\nabla u$ on $\pa B^{\R^2}_\ro(0)$ with $\|\bar{u}\|_\infty\le\|u\|_\infty, \|\nabla\bar{u}\|_\infty\le C(\La)\|\nabla u\|_\infty$.
	Then
	\begin{equation*}
		\int_{B^{\R^2}_\ro(0)} |\nabla^2 w|^2 \le C\ro \int_{graph\big(u|_{\pa B^{\R^2}_\ro(0)\cap A}\big)} |\sff_\Si|^2\, d\cH^1,
	\end{equation*}
	where $C=C(\ep,\La)$ depends only on $\ep,\La$ and it is uniformly bounded for $\ep\in[0,1]$ and $\La\in[0,\La_0]$ for a given $\La_0<+\infty$.
\end{lemma}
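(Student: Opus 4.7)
I would prove the lemma by adapting the strategy of the interior case Lemma \ref{lem1}: identify $w$ as the minimizer of $v\mapsto\int_{B^{\R^2}_\ro(0)}|\nabla^2 v|^2$ over $H^2$--functions with the prescribed Cauchy data on $\pa B^{\R^2}_\ro(0)$ and with $v\equiv 0$ on $\Ga\cap B^{\R^2}_\ro(0)$, then bound this minimum by evaluating it on a single admissible competitor $\tilde v$ whose Hessian is controlled by $|\sff_\Si|^2$ on $graph(u|_{\pa B^{\R^2}_\ro(0)\cap A})$. The extra ingredient compared with the interior setting is how to enforce the internal Dirichlet condition on $\Ga$; the hypothesis $u|_{\Ga\cap U}\equiv 0$ makes this possible via an odd reflection across $\Ga$.

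First I would straighten the boundary by the diffeomorphism $\Phi(x,y):=(x,y-\psi(x))$: under $|\psi'|\le\de$ and $|\psi''|\le\La$, both $\Phi$ and $\Phi^{-1}$ have $C^2$--norms controlled by $\La$, and $\Phi$ sends $\Ga\cap U$ into a segment of $\{y=0\}$ and $A$ into the upper half-plane. Pull back $u$ to $\tilde u:=u\circ\Phi^{-1}$, so that $\tilde u$ vanishes on $\{y=0\}$ and $|\nabla\tilde u|\le C(\La)\ep$, and define the $C^{1,1}$ odd extension $\hat u(x,y):=-\tilde u(x,-y)$ for $y<0$, which satisfies $|\nabla^2\hat u|(x,y)=|\nabla^2\tilde u|(x,|y|)$ a.e. Choose $\ro'\in(\ro/4,\ro/2)$ such that the Euclidean disc $B^{\R^2}_{\ro'}(0)$ lies inside $\Phi(B^{\R^2}_\ro(0))$ (possible for $\ro$ small, since $|\Phi-\mathrm{Id}|_{C^0(B^{\R^2}_\ro(0))}\le C\La\ro^2$) and $\hat u$ is defined in a neighbourhood of $\pa B^{\R^2}_{\ro'}(0)$. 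Apply Lemma \ref{lem1} on $B^{\R^2}_{\ro'}(0)$ with boundary data $\hat u,\nabla\hat u$: by uniqueness of the biharmonic extension with prescribed Cauchy data on the $(x,y)\mapsto(x,-y)$--invariant disc $B^{\R^2}_{\ro'}(0)$ and the oddness of the data, the resulting biharmonic $\tilde w$ is odd in $y$, so $\tilde w\equiv 0$ on $\{y=0\}\cap B^{\R^2}_{\ro'}(0)$, and
\begin{equation*}
\int_{B^{\R^2}_{\ro'}(0)}|\nabla^2\tilde w|^2\le C(\ep)\,\ro'\int_{graph(\hat u|_{\pa B^{\R^2}_{\ro'}(0)})}|\sff|^2\,d\cH^1\le C(\ep,\La)\,\ro\int_{graph(u|_{\pa B^{\R^2}_\ro(0)\cap A})}|\sff_\Si|^2\,d\cH^1,
\end{equation*}
after converting from $\hat u$--Hessians to $u$--Hessians by the bounded distortion of $\Phi$, and then invoking the graph formula $|\nabla^2 u|\le C(\ep)|\sff_\Si|$ (valid since $|\nabla u|\le\ep\le 1$).

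The final step is to assemble $\tilde v$ on $B^{\R^2}_\ro(0)$ by pulling back, $v_0:=\tilde w\circ\Phi$ on $\Phi^{-1}(B^{\R^2}_{\ro'}(0))$ (which vanishes on $\Ga$ by construction), and then extending $v_0$ into the annular region up to $\pa B^{\R^2}_\ro(0)$ so as to match the prescribed $\bar u,\nabla\bar u$ on $\pa B^{\R^2}_\ro(0)$ while still vanishing on $\Ga\cap B^{\R^2}_\ro(0)$. The extension is produced by an explicit cut-off construction performed in the flat coordinates of $\Phi$ and forced to be odd in $y$, so that the vanishing on $\Ga$ is automatic; since $|\bar u|\le\|u\|_\infty$ and $|\nabla\bar u|\le C(\La)\ep$ on $\pa B^{\R^2}_\ro(0)$, its Hessian integral in the annular region is controlled by $C(\ep,\La)\,\ro\,\|\nabla^2 u\|_{L^2(\pa B^{\R^2}_\ro(0)\cap A)}^2$, hence by the same $|\sff_\Si|^2$ integral. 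The Dirichlet principle $\int_{B^{\R^2}_\ro(0)}|\nabla^2 w|^2\le\int_{B^{\R^2}_\ro(0)}|\nabla^2\tilde v|^2$ then closes the argument, with constants uniform for $\ep\in[0,1]$ and $\La\in[0,\La_0]$.

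The main obstacle is the last step: constructing the interpolation in the annular region $B^{\R^2}_\ro(0)\sm \Phi^{-1}(B^{\R^2}_{\ro'}(0))$ that simultaneously matches $v_0$ at $\pa\Phi^{-1}(B^{\R^2}_{\ro'}(0))$, matches $\bar u,\nabla\bar u$ at $\pa B^{\R^2}_\ro(0)$, keeps the trace on $\Ga\cap B^{\R^2}_\ro(0)$ identically zero, and still keeps the Hessian $L^2$--norm of the same order as the right-hand side. Doing the cut-off in flat coordinates and symmetrising it evenly/oddly about $\{y=0\}$ (evenly for the $\bar u$--part and oddly for its partial derivatives in $y$) turns the construction into a boundary version of Simon's cut-off lemma in \cite{SiEX}, the estimates of which carry over with constants depending only on $\ep$ and $\La$.
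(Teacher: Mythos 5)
Your core idea --- straighten $\Ga$ by a $C^2$ diffeomorphism whose norms are controlled by $\La$, extend $u$ and $\Si$ by odd reflection across $\Ga$ so that the constraint $w=0$ on $\Ga$ becomes automatic from the oddness of the biharmonic extension, and then reduce to the interior comparison Lemma \ref{lem1} --- is exactly the route the paper takes: its proof consists of reducing to $\psi\equiv 0$, reflecting $\Si$ and $u$ oddly about the segment $\Ga$ so that the reflected surface has no boundary near $\pa B^{\R^2}_\ro(0)$, and rerunning the estimates of the interior lemma on the reflected configuration, the factor $2$ from the reflection and the distortion of the diffeomorphism being absorbed into $C(\ep,\La)$.

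However, your implementation via the inner disc $B^{\R^2}_{\ro'}(0)$ with $\ro'\in(\ro/4,\ro/2)$ has a genuine gap. The function $u$ is only given on a neighbourhood $U$ of $\pa B^{\R^2}_\ro(0)\cap A$; after straightening and reflecting, $\hat u$ is defined only near the image of $\pa B^{\R^2}_\ro(0)$, not near $\pa B^{\R^2}_{\ro'}(0)$ for $\ro'\le\ro/2$ (take for instance $U=\{p\in A:\ \dist(p,\pa B^{\R^2}_\ro(0)\cap A)<\ro/100\}$). So the Cauchy data you feed into Lemma \ref{lem1} on $\pa B^{\R^2}_{\ro'}(0)$ need not exist. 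Even granting their existence, the resulting estimate is in terms of $\int_{graph(\hat u|_{\pa B^{\R^2}_{\ro'}(0)})}|\sff|^2$, an integral over the curve of the (reflected) surface lying above $\pa B^{\R^2}_{\ro'}(0)$, and there is no reason this is controlled by the integral of $|\sff_\Si|^2$ over the curve above $\pa B^{\R^2}_\ro(0)\cap A$ appearing in the statement: these are different curves on the surface, and ``the bounded distortion of $\Phi$'' only accounts for the change of coordinates, not for the change of circle. Finally, the annular interpolation matching Cauchy data on both $\Phi^{-1}(\pa B^{\R^2}_{\ro'}(0))$ and $\pa B^{\R^2}_\ro(0)$ while keeping a zero trace on $\Ga$ is left as an acknowledged obstacle; in such a construction one may only normalize by affine functions vanishing on $\Ga$ (i.e.\ odd ones), which is the one point where the boundary case genuinely differs from the interior one. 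All of these complications disappear if you drop the inner disc entirely and apply the interior estimates directly to the odd reflection on (the straightened image of) $B^{\R^2}_\ro(0)$ itself, with Cauchy data the odd extension of $u,\nabla u$ on the full circle $\pa B^{\R^2}_\ro(0)$: by uniqueness the biharmonic extension is odd, hence vanishes on $\Ga$, and the right-hand side of the interior estimate is exactly twice $\int_{graph(u|_{\pa B^{\R^2}_\ro(0)\cap A})}|\sff_\Si|^2\,d\cH^1$ up to constants depending on $\La$.
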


\begin{proof}
	Up to $C^2$-diffeomorphism we can assume that $\psi\equiv0$. In this case the surface $\Si$ can be extended to a surface $\bar{\Si}$ inside $B_{\frac{7}{4}\ro}$ by an odd reflection about the segment $\Ga$, so that $\bar{\Si}\cap B_{\frac{5}{4}\ro}$ is without boundary. Extending correspondingly the function $u$, the proof follows applying the very same estimates used in the proof of Lemma \ref{lem1}.
\end{proof}

\begin{remark}
	The functions $w$ in Lemma \ref{lem1} and in Lemma \ref{lem2} can be chosen to be a minimizers of $\int_{B^{\R^2}_\ro(0)}|\nabla^2 v|^2$ or of $\int_{B^{\R^2}_\ro(0)} |\De v|^2$, among competitors $v$ satisfying the same boundary conditions.
\end{remark}

\begin{lemma}[Poincar\'{e}-type inequality]
	Let $\de\in(0,1/2)$. Let $\Om\con B^{\R^2}_1(0)=:B$ a domain of the form $B\sm E$ such that: $\cL^1(\pi_x(E))\le 1/2$ and $\cL^1(\pi_y(E))< \de$, where $\pi_i$ is the projection on the $i$-axis. Then there exists a universal constant $C$ such that for any $f\in W^{1,2}(\Om)$ and any subdomain $\om\con\Om$ it holds that
	\begin{equation*}
		\inf_{\la\in\R} \int_\om |f-\la|^2\,dx \le C \int_\om |\nabla f|^2\,dx + C\de \sup_\om |f|^2.
	\end{equation*}
\end{lemma}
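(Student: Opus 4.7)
The plan is to exploit the asymmetric thinness of the slit set: $\cL^1(\pi_y(E))<\de$ is tiny, whereas $\cL^1(\pi_x(E))\le 1/2$ is merely bounded. Set $E_y:=\pi_y(E)\cap(-1,1)$, $G_y:=(-1,1)\setminus E_y$, $E_x:=\pi_x(E)\cap(-1,1)$, $G_x:=(-1,1)\setminus E_x$. For $y\in G_y$ the horizontal slice $B\cap(\R\times\{y\})$ lies entirely in $\Om$, and analogously for vertical slices at $x\in G_x$. First I would partition $\om=\om_g\sqcup\om_b$ with $\om_b:=\om\cap(\R\times E_y)$; by Fubini $|\om_b|\le 2\cL^1(E_y)<2\de$.

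\smallskip

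Next I would pick a good abscissa $x_0\in[-3/4,3/4]\setminus E_x$ (which exists because $\cL^1([-3/4,3/4])=3/2>1/2\ge \cL^1(E_x)$) and a good height $y_*\in G_y\cap(-\sqrt{1-x_0^2},\sqrt{1-x_0^2})$, and define $\la$ as a weighted average of $f$ over a product of positive-measure good reference subsets of $G_x\times G_y$, ensuring $|\la|\le\sup_\om|f|$. The bad part is then handled directly:
$$\int_{\om_b}|f-\la|^2\,dx\le 2|\om_b|\bigl(\sup_\om|f|^2+|\la|^2\bigr)\le 8\de\sup_\om|f|^2,$$
accounting for the second term on the right-hand side.

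\smallskip

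For the good part, given $(x,y)\in\om_g$ (so $y\in G_y$), the ``L-path'' $(x,y)\to(x_0,y)\to(x_0,y_*)$ lies entirely in $\Om$, since the horizontal segment at height $y\in G_y$ and the vertical segment at $x_0\in G_x$ are both full. The fundamental theorem of calculus gives
$$f(x,y)-f(x_0,y_*) = \int_{x_0}^{x}\partial_1 f(t,y)\,dt + \int_{y_*}^{y}\partial_2 f(x_0,t)\,dt.$$
Squaring, applying Cauchy--Schwarz, averaging $(x_0,y_*)$ over a reference set of positive measure in $G_x\times G_y$, and integrating over $(x,y)\in\om_g$ should yield a bound $\int_{\om_g}|f-\la|^2\le C\int_\Om|\nabla f|^2$ via one-dimensional Poincar\'e on horizontal slices combined with control of the vertical variation along good vertical lines. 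Together with the bad-part estimate this proves the inequality modulo the subtlety below.

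\smallskip

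The hard part will be replacing $\int_\Om|\nabla f|^2$ by $\int_\om|\nabla f|^2$ on the right-hand side, since the L-paths above live in $\Om$, not a priori in $\om$. The resolution I would pursue is to average the line integrals over the reference points $(x_0,y_*)$ and invoke the fact that $\Om\setminus\om$ has projection of small measure in the directions used; the portions of the L-paths outside $\om$ contribute at most a term of order $\de\sup_\om|f|^2$ by a trace/extension estimate on $\partial\om\cap\Om$, which is then absorbed into the second right-hand side term at the cost of increasing the universal constant $C$.
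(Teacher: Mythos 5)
The paper does not prove this lemma from scratch: it observes that $f|_\om\in W^{1,2}(\om)$ and then invokes Lemma A.1 of \cite{SiEX} directly, so any self-contained argument is necessarily a different route. Your opening moves are in the right spirit of that slicing lemma: splitting off the bad strip $\om_b=\om\cap(\R\times\pi_y(E))$, whose measure is at most $2\de$, and treating it with the $\sup$ bound is exactly how the $C\de\sup_\om|f|^2$ term should arise. The problem lies in the core of your estimate for the good part.

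The gap is the one you flag yourself, and your proposed resolution does not work. The L-paths $(x,y)\to(x_0,y)\to(x_0,y_*)$ are only guaranteed to lie in $\Om$, whereas the right-hand side controls $\nabla f$ and $f$ only on the \emph{arbitrary} subdomain $\om$; the line integrals of $\pa_1 f$ and $\pa_2 f$ therefore run through $\Om\sm\om$, where nothing is controlled. The claim that the portions of the paths outside $\om$ contribute only $O(\de\sup_\om|f|^2)$ is false in general: the smallness parameter $\de$ measures $\pi_y(E)$, not anything about $\Om\sm\om$, which can be almost all of $B$. Concretely, take $E=\emptyset$ and $\om$ a small disc near $\pa B$; then nearly the whole length of every L-path lies in $\Om\sm\om$, and an $f\in W^{1,2}(\Om)$ vanishing on $\om$ but wild on $\Om\sm\om$ makes both sides of the asserted inequality zero while your intermediate bound through $f(x_0,y_*)$ and the reference average blows up. There is also no trace operator on $\pa\om\cap\Om$ to invoke, since $\pa\om$ need not be rectifiable. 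A related defect: your $\la$ is an average of $f$ over a reference subset of $G_x\times G_y$ that need not meet $\om$ (for instance if $\pi_x(\om)\con\pi_x(E)$), so the bound $|\la|\le\sup_\om|f|$ used in the bad-part estimate is unjustified. To close the argument you must work entirely inside $\om$ --- e.g.\ one-dimensional Poincar\'e on the connected components of the slices $\om\cap(\R\times\{y\})$ for $y\notin\pi_y(E)$, plus a genuine argument (this is the nontrivial point, where connectedness of $\om$ and the projection hypotheses actually enter) that the slice averages can be matched to a single $\la$ outside an exceptional set of measure $O(\de)$ --- or simply reduce to \cite{SiEX}, Lemma A.1, as the paper does.
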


\begin{proof}
	For a subdomain $\om\con\Om$ and $f\in W^{1,2}(\Om)$, the restriction $f|_\om$ is a Sobolev function in $W^{1,2}(\om)$. Hence the proof follows by applying Lemma A.1 in \cite{SiEX}.
\end{proof}

\vspace{0.5cm}
\textcolor{white}{text}\\
\noindent\emph{Acknowledgments.} I warmly thank Matteo Novaga for his useful suggestions and comments.\\


\end{document}